\documentclass[12pt]{extarticle}
\usepackage{amsmath, amsthm, amssymb, mathtools, hyperref, color}
\usepackage{graphicx}
\usepackage{caption}
\usepackage{subcaption}
\usepackage{tikz}
\usepackage{xparse}
\usepackage{tikz-cd}
\usepackage{tkz-graph}
\usetikzlibrary{shapes, arrows, positioning}
\usepackage{quiver}
\usepackage{algorithm}
\usepackage{algpseudocode}
\usepackage{lipsum}
\usepackage{tikz}
\usepackage{tikz-3dplot}
\usetikzlibrary{positioning}

\makeatletter
\let\@fnsymbol\@alph
\makeatother
\renewcommand{\thefootnote}{\alph{footnote}}

\title{Restricted Chip-Firing: Toric Toppling Ideals, Picard Groups, and Cellular Resolutions}
\author{Rahul Karki}

\begin{document}
\maketitle

% --- Unlabeled footnote for keywords & MSC ---
\begingroup
\renewcommand{\thefootnote}{} % Suppress label
\addtocounter{footnote}{1}    % Advance counter
\footnotetext{\textbf{Key words and phrases:} Chip-firing games on graphs, Picard group, toric ideals, Gr\"obner bases, free resolutions, hyperplane arrangements, Fano polytopes. 
\newline \textbf{MSC (2020):} 05C57, 14M25, 13P10, 13D02, 05C25, 13F55}
\endgroup
% --- Resume alphabetic footnotes ---
\renewcommand{\thefootnote}{\alph{footnote}}

% Theorem environments
\newtheorem{theorem}{Theorem}
\numberwithin{theorem}{section}
\newtheorem{proposition}[theorem]{Proposition}
\newtheorem{lemma}[theorem]{Lemma}
\newtheorem{corollary}[theorem]{Corollary}
\newtheorem{definition}[theorem]{Definition}
\newtheorem{remark}[theorem]{Remark}
\newtheorem{conjecture}[theorem]{Conjecture}
\newtheorem{problem}[theorem]{Problem}
\newtheorem{example}[theorem]{Example}
\newtheorem{question}{Question}

% Breakable algorithm environment
\makeatletter
\newenvironment{breakablealgorithm}
  {
    \begin{center}
    \refstepcounter{algorithm}
    \hrule height.8pt depth0pt \kern2pt
    \renewcommand{\caption}[2][\relax]{%
      {\raggedright\textbf{\fname@algorithm~\thealgorithm} ##2\par}%
      \ifx\relax##1\relax
        \addcontentsline{loa}{algorithm}{\protect\numberline{\thealgorithm}##2}%
      \else
        \addcontentsline{loa}{algorithm}{\protect\numberline{\thealgorithm}##1}%
      \fi
      \kern2pt\hrule\kern2pt
    }
  }{
    \kern2pt\hrule\relax
    \end{center}
  }
\makeatother

\begin{abstract}
We study certain groups and ideals arising from the chip-firing game on a generalisation of graphs called pargraphs. Several well-known families of toric ideals arise as toppling ideals of pargraphs. These include ideals defining rational normal curves, binomial edge ideals of complete graphs, and toric ideals of certain Fano polytopes. As our central result, we provide several equivalent conditions for the Picard group of a pargraph to be free, which equivalently characterise when its toppling ideal is toric. Furthermore, we construct a Gr\"obner basis for the toppling ideal, a minimal cellular free resolution for a distinguished initial ideal known as the $G$-parking function ideal, and establish Cohen-Macaulay property for these ideals.
\end{abstract}

\section{Introduction}\label{Picard group}
Let $\mathbb{K}$ be a field and $R_n=\mathbb{K}[x_1,\dots,x_n]$ be the polynomial ring in $n$-variables over $\mathbb{K}$. An ideal $I$ of $R_n$ is called \emph{toric} if it is a prime ideal generated by binomials of the form ${\bf x^{u}}-{\bf x^{v}}$ where ${\bf u, v}\in \mathbb{N}^{n}$ (\cite[Subsection 3.1]{herzog2018binomial}). The theory of toric ideals plays a central role in several areas of mathematics, due to their rich structural properties and significant applications in geometry, optimization, and algebraic statistics \cite{miller2005combinatorial},\cite{herzog2018binomial},\cite{sturmfels1996grobner}.
 In recent years, toric ideals arising from graphs have been extensively studied, investigating their robustness properties and connections to numerical semigroups, Koszul algebras, and algebraic statistics via Markov chains \cite{herzog2010binomial},\cite{greif2020green},\cite{garcia2023robustness}, \cite{almousa2025root}. In this work, we study the toric structure of ideals arising from the \emph{chip-firing game} on a generalisation of graphs, which we call \emph{pargraphs}.

Let $G=(V(G),E(G))$ be a connected multigraph, possibly with multiple edges but no loops. The chip-firing game is a discrete dynamical system on $G$. The game starts with an initial assignment of integer entries to the vertices of $G$ that are known as chips or dollars. At each move of this game, some vertex $v$ fires one chip along all the edges incident on it. This results in $v$ losing its degree or valence number of chips, with each other vertex $u$ receiving a number of chips equal to the number of edges between $u$ and $v$. We refer to \cite{corry2018divisors},\cite{klivans2018mathematics}, Subsection \ref{chipfireParg} for more details on this game.
%Given an initial configuration, a fundamental question is whether there exists a finite sequence of chip-firing moves that leads to a configuration with non-negative chips at every vertex. In general, the answer is no and depends subtly on the initial configuration. 
The chip-firing game and its variants have a very rich theory and have been studied from multiple perspectives, including geometric (connected to Riemann-Roch and Brill-Noether theories \cite{BAKER2007766},\cite{baker2008specialization}), combinatorial (such as stabilization and recurrence analysis \cite{klivans2018mathematics}), algebraic (Picard and Jacobian groups, toppling and $G$-parking function ideals \cite{BAKER2007766},\cite{ChipFirePotential}, \cite{manjunathschwil}), and statistical physics where it is well known as the Abelian Sandpile Model. Here, our primary interest lies on the algebraic aspects of this game on pargraphs. 

Informally, a pargraph is a combinatorial structure arising from a partition of the vertex set of a graph $G$. More precisely,
a pargraph is an ordered pair $(G,\Pi)$ where $G$ is a connected multigraph and $\Pi=\{V_1,\dots,V_k \}$ is a partition of the vertex set $V(G)$ such that the subgraph induced on each $V_i$ is connected. %When the connectedness requirement on the induced subgraphs $G[V_i]$ is removed, then the ordered pair $(G,\Pi)$ is called a \emph{pseudo-pargraph}.
%The vertices of $G$ are called the \emph{basic vertices} of $(G,\Pi)$, and 
The elements of $\Pi$ are called the \emph{main vertices} of the pargraph. An edge $\{i,j\}$ of $G$ is called a \emph{relevant edge} of the pargraph if $i$ and $j$ lie in two different main vertices of $(G,\Pi)$. %If $i$ and $j$ lie in the same main vertex, then $\{i,j\}$ is referred to as a \emph{basic edge} of the pargraph. %The edges of $G$ between two vertices of $G$ that lie in the same main vertex are called \emph{basic edges} of $(G,\Pi)$, and the edges between two basic vertices that lie in distinct main vertices are called \emph{relevant edges} of the pargraph. 
 We refer to Figure \ref{egpargaph} for an example of a pargraph. %The edges $\{1,2\}, \{5,7\}$ are examples of  relevant edges of $(G,\Pi)$, respectively.

\begin{figure}[ht]
\begin{center}
\begin{tikzpicture}[scale=0.8, every node/.style={inner sep=1.5pt}]

\begin{scope}[xshift=7cm]
  \coordinate (c1) at (2.5,2);
  \coordinate (c2) at (5,2);
  \coordinate (c3) at (0,0);
  \coordinate (c4) at (2.5,0);
  \coordinate (c5) at (5,0);
  \coordinate (c6) at (2.5,-2);
  \coordinate (c7) at (5,-2);

  \foreach \a/\b in {1/2,2/5,3/1,3/4,3/6,4/5,6/7,7/5}{\draw[thick] (c\a)--(c\b);}
  \foreach \i/\pos in {1/above,2/above,3/above,4/above,5/right,6/above,7/below}{
    \node[circle,draw,fill,minimum size=6pt,label=\pos:{\small \i}] at (c\i) {};
  }

  % ellipse around 2 and 5
  \draw[blue, thick, dashed] (5,1) ellipse (0.6cm and 1.5cm);
\end{scope}
\end{tikzpicture}
\end{center}
\caption{\label{egpargaph}The pargraph $(G,\Pi)$ with $\Pi=\{ \{2,5\},\{j\}\mid j \in \{1,3,4,6,7\}
\}$.}
\end{figure}

The algebraic approach proceeds by associating certain groups and ideals to the game, which we briefly outline here. The chip-firing game on a pargraph $(G,\Pi)$ is similar to playing the game on $G$ where we are only allowed to fire from the main vertices of $(G,\Pi)$. In the chip-firing from a  main vertex $V_i\in \Pi$, all vertices of $G$ in $V_i$ fire chips simultaneously (see Subsection \ref{chipfireParg}). 
These firing moves are encoded in a lattice (that is, a subgroup of $\mathbb{Z}^{n}$) $L_{\Pi}$ that we call as the \emph{Laplacian lattice} of the pargraph (Definition \ref{deflaplattice}).
%The algebraic approach proceeds by associating certain groups and ideals to the game, which we briefly outline here. The chip-firing game on a pargraph $(G,\Pi)$ is similar to playing the game on $G$ where we are only allowed to fire from the main vertices of $(G,\Pi)$. %elements of $\Pi$. 
%In the chip-firing from a  main vertex $V_i\in \Pi$, all vertices of $G$ in $V_i$ fire chips simultaneously. These firing moves are encoded in a lattice (that is, a subgroup of $\mathbb{Z}^{n}$) $L_{\Pi}$ that we call as the \emph{Laplacian lattice} of the pargraph. Let $V(G)=\{1,\dots,n\}$ be the set of vertices of $G$. Consider the Laplacian matrix $\Lambda_G=D-A_G$ of the graph $G$ where $D$ is the diagonal matrix with $i$-th diagonal entry being the valence or degree of the vertex $i$, and $A_G$ is the adjacency matrix whose $i,j$-th entry represents the total number of edges between vertices $i$ and $j$. In case of the chip-firing game on $G$, the columns of $\Lambda_G$ encodes the chip-firing moves. Let $b_1,\dots,b_n$ be the columns of $\Lambda_G$ corresponding to the vertices $1,\dots,n$, respectively. The Laplacian lattice $L_{\Pi}$ of the pargraph is the lattice generated by the vectors $\{b_{V_1},\dots,b_{V_k}\}$ where $b_{V_i}=\sum_{j\in V_i}b_j$. 
%Let $\mathbb{K}$ be a field and $R_n=\mathbb{K}[x_1,\dots,x_n]$ be the polynomial ring in $n$-variable over $\mathbb{K}$. 
We refer the lattice ideal $I_{\Pi}$ associated with $L_{\Pi}$ as the \emph{toppling ideal} of the pargraph. In particular, $I_{\Pi}=\langle {\bf x}^{\bf u}-{\bf x}^{\bf v} \mid {\bf u, v} \in \mathbb{N}^{n} \text{ and } {\bf u}-{\bf v}\in L_{\Pi} \rangle$. Many of the algebraic and geometric properties of the ideal $I_{\Pi}$ are encoded in the quotient group $\mathbb{Z}^{n}/L_{\Pi}$, which we refer to as the \emph{Picard group} of the pargraph. These definitions extend the classical graph-theoretic notions of the Picard group and toppling ideal to pargraphs.

The study of toppling ideal for graphs started with the work of Cori, Rossin, and  Salvy \cite{CoriRossinSalvy}, where they introduced an inhomogeneous version of the toppling ideal and constructed a Gr\"obner basis of it, establishing connections to \emph{stable configurations} and the \emph{sandpile group}.
Subsequently, the homogeneous version of the toppling ideal was investigated by Perkinson, Perlman, and Wilmes \cite{perkinson2013primer},  Manjunath and Sturmfels \cite{manjunath2013monomials}, Manjunath, Schreyer, and Wilmes \cite{manjunathschwil}. 
These works explored algebraic and geometric properties including the construction of Gr\"obner bases, explicit minimal free resolutions, and the study of initial ideals, notably the $G$-parking function ideal. 

For graphs, it is well-known that the toppling ideal of a graph is toric if and only if $G$ is a tree {\cite[Propositions 1.20, 2.37]{corry2018divisors}}. Furthermore, the toppling ideal of a tree is generated by homogeneous binomials of degree $1$. But in \cite{karki2024rationalnormalcurveschip}, the author and Manjunath proved that the extension of toppling ideals to \emph{parcycles} %(a special case of pargraph where $G$ is a cycle graph) 
yields a family of toric ideals that includes rational normal curves and binomial edge ideals of complete graphs. This work naturally leads to the question of whether higher-dimensional Veronese embeddings and more general determinantal ideals can be realised as toppling ideals corresponding to such structures.
In this work, we take preliminary steps toward this objective by investigating the following questions, which are the main focus of this article: 1. Determining conditions under which the toppling ideal of a pargraph is toric. 2. Constructing a Gr\"obner basis for the toppling ideal. 3. Computing algebraic invariants and investigating the Cohen-Macaulay property. The answers to the above questions lead to examples in Section \ref{FanChip} of toric ideals of polytopes that appear as toppling ideals.

We also study the Picard group of a pargraph, with a primary interest in establishing conditions for its freeness.
Understanding this property is instrumental in characterising when the corresponding toppling ideal is toric. The study of the Picard group (also known as the sandpile group, critical group) of a graph has been an active area of research. In recent years, several studies have focused on computing the Picard groups of directed graphs, including trees, cycles, wheels, and neural network graphs, examining free and torsion components \cite{jun2025picard},\cite{alfaro2021structure},\cite{Fittingidealtakenori}. One of our main results in this direction is the following. Consider the reduced Laplacian polynomial of a pargraph $(G,\Pi)$ (which we define formally in Definition \ref{laplacianpoly}).
\begin{theorem}\label{mainthm}
   The Picard group of a pargraph $(G,\Pi)$ is free if and only if  the content of its reduced Laplacian polynomial is $1$.
\end{theorem}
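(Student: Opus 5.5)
The plan is to reduce the statement to a Smith normal form computation for the matrix of firing moves. Let $B$ be the $n\times k$ integer matrix whose columns are $b_{V_1},\dots,b_{V_k}$. Then $L_{\Pi}$ is the column span of $B$, and $\mathrm{Pic}(G,\Pi)=\mathbb{Z}^n/L_{\Pi}$. Since the columns of the Laplacian $\Lambda_G$ sum to zero, $\sum_i b_{V_i}=0$. Moreover the $b_{V_i}$ span a sublattice of rank exactly $k-1$: this is the Laplacian of the quotient multigraph on the main vertices, which is connected because $G$ is connected. So $L_{\Pi}$ is already generated by the columns of the reduced matrix $\widetilde B$, obtained by deleting one column, say $b_{V_k}$, and $\widetilde B$ has full column rank $k-1$.

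By the Smith normal form, $\mathbb{Z}^n/L_{\Pi}\cong \mathbb{Z}^{\,n-k+1}\oplus \bigoplus_{i=1}^{k-1}\mathbb{Z}/d_i\mathbb{Z}$, where $d_1\mid\cdots\mid d_{k-1}$ are the invariant factors of $\widetilde B$. Their product $d_1\cdots d_{k-1}$ equals the gcd $\Delta_{k-1}$ of all $(k-1)\times(k-1)$ minors of $\widetilde B$. Hence the Picard group is free if and only if every $d_i=1$, which happens if and only if $\Delta_{k-1}=1$. This part is standard and needs no pargraph-specific input.

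It then remains to identify $\Delta_{k-1}$ with the content of the reduced Laplacian polynomial (Definition~\ref{laplacianpoly}). I expect the reduced Laplacian polynomial to be, or to expand to, a polynomial whose coefficients are exactly, up to sign, the maximal minors $\det \widetilde B_{S}$, where $S$ ranges over the $(k-1)$-subsets of rows. For instance, it might be a determinant of $\widetilde B^{T}$ times a generic diagonal or row-selection matrix in indeterminates. In that case one expands by Cauchy--Binet, or by multilinearity in the rows, so that each monomial in the indeterminates appears with coefficient $\pm\det\widetilde B_S$. Distinct subsets $S$ give distinct monomials, so no cancellation occurs. The content is then $\gcd_S|\det\widetilde B_S|=\Delta_{k-1}$, and combining this with the previous paragraph proves the theorem.

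The main obstacle is this identification step: checking that the definition's polynomial really has the maximal minors as its coefficients with no merging of monomials. If the definition instead expresses the polynomial combinatorially, for example as a sum over spanning forests of $G$ rooted relative to $\Pi$ (a matrix--tree type expansion), I would first prove a matrix--tree theorem for $\widetilde B$ to recover the minor interpretation. I would also check that the choice of deleted column $V_k$ does not affect the gcd, which follows because the columns sum to zero.
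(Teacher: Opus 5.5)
\textbf{There is a genuine gap in the identification step, and it cannot be repaired.} The content $C_P$ of the reduced Laplacian polynomial is \emph{not} in general equal to the gcd $\Delta_{k-1}$ of the maximal minors of $\widetilde B$; the two only have the same prime divisors. By Definition~\ref{laplacianpoly}, $P(\mathbf{x})=\det((L(\mathbf{x}))_{\rm red})$ is an $(n-1)\times(n-1)$ determinant in variables $x_e$ attached to \emph{all} pairs inside each main vertex. By the weighted matrix--tree theorem it is a spanning-tree generating polynomial, and its monomials are not indexed by $(k-1)$-subsets of rows.

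For a concrete counterexample, take $n=3$, $\Pi=\{\{1,2\},\{3\}\}$, one edge $\{1,2\}$, and two parallel edges each for $\{1,3\}$ and $\{2,3\}$. Then
\[
P(\mathbf{x})=\det\begin{pmatrix}2+x_{12}&-x_{12}\\-x_{12}&2+x_{12}\end{pmatrix}=4+4x_{12},
\]
so $C_P=4$. On the other hand, $L_\Pi$ is generated by $b_{V_1}=(2,2,-4)$, so the Picard group is $\mathbb{Z}^2\oplus\mathbb{Z}/2\mathbb{Z}$ and $\Delta_1=2$. So no expansion of $P$ with coefficients $\pm\det\widetilde B_S$ and ``no cancellation'' exists. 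Your fallback, proving a matrix--tree theorem to recover that interpretation, would be aiming at a false statement.

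\textbf{What your idea does give, once corrected, is one divisibility.} With $n\in V_k$, in $(L(\mathbf{x}))_{\rm red}$ replace one column of each $V_t$, $t<k$, by the sum of the columns indexed by $V_t$. The intra-block part $L_B$ has zero column sums within each block, so this new column is $b_{V_t}$ (last entry deleted) and does not involve $\mathbf{x}$. A generalized Laplace expansion along these $k-1$ columns gives $P=\sum_S\pm\det(\widetilde B_S)\,Q_S(\mathbf{x})$ with $Q_S\in\mathbb{Z}[\mathbf{x}]$. Hence $\Delta_{k-1}\mid C_P$, which proves ``content $1\Rightarrow$ free''. The paper gets this direction instead from $0/1$ evaluations, which are spanning-tree numbers of graphs $G(\mathbf{u})$ with the same Laplacian lattice, together with interlacing of invariant factors.

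The converse, ``free $\Rightarrow$ content $1$'', requires showing that every prime $p\mid C_P$ divides $\Delta_{k-1}$, and this needs an idea absent from your outline. The paper argues as follows; here $U$ is the $n\times k$ matrix of block indicator vectors, so $L(\mathbf{x})U=B$.
\begin{itemize}
\item If $p\mid C_P$, then since all cofactors of the weighted Laplacian $L(\mathbf{x})$ are equal, $L(\mathbf{x})$ has rank at most $n-2$ over $\mathbb{Z}_p(\mathbf{x})$. So it has a kernel vector $\mathbf{v}$ not proportional to $(1,\dots,1)$.
\item Differentiate $L(\mathbf{x})\mathbf{v}=0$ with respect to $x_e$, $e=\{i,j\}$, and multiply by $\mathbf{v}^t$. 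Using symmetry of $L(\mathbf{x})$, this gives $(\mathbf{v}(i)-\mathbf{v}(j))^2=0$.
\item Hence $\mathbf{v}=U\mathbf{w}$ is constant on main vertices, with $\mathbf{w}$ not proportional to $(1,\dots,1)$.
\item Then $B\mathbf{w}=L(\mathbf{x})U\mathbf{w}=0$ forces $\mathrm{rank}\,B\le k-2$ over $\mathbb{Z}_p$, i.e.\ $p\mid\Delta_{k-1}$.
\end{itemize}
This is exactly where the genericity of the variables $x_e$ on all intra-block pairs is used, and your outline never exploits it.
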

An illustration of the applicability of Theorem \ref{freeness of picard group} is provided in Examples \ref{exampofpicard}, \ref{mainthmexam}. As a consequence,
Corollary \ref{corrpargrah} establishes another necessary and sufficient condition for the freeness of the Picard group in terms of pargraphs, and Corollary \ref{picGfreethm} provides an additional, purely structural sufficient condition.
%We illustrate the usefulness of Theorem \ref{freeness of picard group} in Example \ref{exampofpicard}. Consequently, we establish another (sufficient) intrinsic criterion for the freeness of the Picard group in Corollary \ref{picGfreethm}.
%In Example \ref{exampofpicard}, we show that the hypothesis in Theorem \ref{picGfreethm} is a sufficient, but not necessary, condition for the Picard group of a pargraph to be free. Consequently, we establish another criterion for the freeness of the Picard group in Lemma \ref{freeness of picard group}.}

We also provide an explicit description of a Gröbner basis for the toppling ideal by extending the divisor theory on graphs to pargraphs. Furthermore, we construct a cellular minimal free resolution of the corresponding initial ideal, called the \emph{$G$-parking function ideal} of the pargraph (Subsections \ref{secgboftop}, \ref{cellresofparkingidelas}). The results are as follows.

\begin{theorem}\label{gboftop}
    The set of binomials $\{{\bf x}^{S \rightarrow \overline{S}}-  {\bf x}^{\overline{S} \rightarrow S} \mid S  \text{ and }\overline{S} \text{ are connected}\newline \text{parsets of } V(G)\text{ and } V_k\subseteq \overline{S}\}$ forms a Gr\"obner basis of the toppling ideal $I_{\Pi}$ with respect to the monomial order $m_{rev}$.
\end{theorem}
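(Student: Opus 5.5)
We follow the strategy of Cori--Rossin--Salvy and Manjunath--Sturmfels for graphs, adapted to main vertices. Fix the notation the paper will use. A \emph{parset} $S\subseteq V(G)$ is a union of main vertices. For a parset $S$, ${\bf x}^{S\rightarrow \overline{S}}$ is the monomial $\prod_{i\in S} x_i^{d_i}$, where $d_i$ is the number of edges from $i$ to $\overline{S}$. Symmetrically, ${\bf x}^{\overline{S}\rightarrow S}$ is the corresponding monomial over $\overline{S}$. The exponent difference is exactly $\sum_{V_j\subseteq S} b_{V_j}$, the firing vector of $S$. Hence every listed binomial lies in $I_{\Pi}$, because the exponent difference lies in $L_{\Pi}$. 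The order $m_{rev}$ is a reverse-lexicographic order in which the variables of the distinguished main vertex $V_k$ are the smallest. Since $V_k\subseteq\overline{S}$, the leading term of each binomial is ${\bf x}^{S\rightarrow\overline{S}}$.

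The plan is to verify the Gröbner property by a reduction (normal form) argument instead of Buchberger pairs. Let $J$ be the monomial ideal generated by the ${\bf x}^{S\rightarrow\overline{S}}$.

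\textbf{Step 1 (leading terms).} Show that every binomial ${\bf x}^{\bf u}-{\bf x}^{\bf v}\in I_{\Pi}$ with ${\bf u}>{\bf v}$ has ${\bf x}^{\bf u}\in J$. It suffices to do this, since lattice ideals are spanned by such binomials and their initial ideals are generated by these leading monomials. Write ${\bf u}-{\bf v}=\sum_j c_j b_{V_j}$. The lattice vectors $b_{V_j}$ sum to zero, so we may normalise $c_k=0$ and $\min_j c_j\le 0\le\max_j c_j$.

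\textbf{Step 2 (extracting a parset).} Let $m=\max_j c_j$. If $m>0$, let $S$ be the union of the main vertices with $c_j=m$. This $S$ is a parset not containing $V_k$. For each $i\in S$ we have $u_i\ge u_i-v_i=\sum_j c_j (b_{V_j})_i$. Grouping terms as in the graph case gives $(u-v)_i\ge d_i^{S}$. Here we use that edges inside a main vertex cancel in $b_{V_j}$, and that each neighbour of $i$ outside $S$ carries a strictly smaller coefficient $c$. Therefore ${\bf x}^{S\rightarrow\overline{S}}$ divides ${\bf x}^{\bf u}$.

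\textbf{Step 3 (connectivity).} Replace $S$ by a connected component of $S$ in the pargraph sense. Each component's outgoing monomial divides the whole one. Likewise, if $\overline{S}$ is disconnected, replace $S$ by $S'=V(G)\setminus C$, where $C$ is the component of $\overline{S}$ containing $V_k$. Then ${\bf x}^{S'\rightarrow\overline{S'}}$ divides ${\bf x}^{S\rightarrow\overline{S}}$, because only edges to $C$ count. Both properties must be achieved simultaneously, using the standard lemma that such a double reduction is possible. This lemma needs checking for pargraphs, since connectivity must respect the connected induced subgraphs $G[V_i]$.

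\textbf{Step 4 (case $m=0$).} If $m=0$, all $c_j\le 0$ with $c_k=0$. We then argue symmetrically that ${\bf x}^{\bf v}$ is divisible by a monomial supported away from $V_k$. Using the order $m_{rev}$, this forces the inequality ${\bf u}<{\bf v}$ unless ${\bf u}-{\bf v}=0$, a contradiction. This is where the reverse-lex choice is essential. Specifically, the set of vertices with $c_j<0$ avoids $V_k$, and ${\bf u}-{\bf v}$ is nonnegative on $V_k$. Precisely, $u-v$ restricted to $V_k$ equals the edges from $V_k$ into the negative set times positive coefficients, so the smallest variables favour ${\bf v}$ as the larger monomial.

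The main obstacle is Step 2: proving the inequality $(u-v)_i\ge d_i^{S}$ for vertices $i$ inside a main vertex, where the interior edges of $G[V_j]$ contribute nothing. It requires a careful computation of $(b_{V_j})_i$. The second delicate point is the degree and tie-breaking in Step 4 for non-homogeneous comparisons. We would first check that $I_{\Pi}$ is homogeneous, since the column sums of $b_{V_j}$ are zero, so the degrees of ${\bf u}$ and ${\bf v}$ agree and reverse-lex reduces to comparing the last differing coordinates.
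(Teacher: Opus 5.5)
Your proof is correct in structure and takes a genuinely different route from the paper, but one step rests on a wrong description of $m_{rev}$ and needs repair.

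\textbf{Comparison of routes.} The paper first shows (Lemma \ref{generating lemma}, via $V_k$-reduced divisors) that the binomials for \emph{all} parsets $S$ with $V_k\not\subseteq S$ generate $I_{\Pi}$. It then applies Buchberger's criterion, with the $S$-pair reductions cited from the parcycle paper. Finally it discards disconnected $S$ or $\overline{S}$ by divisibility of leading monomials. You bypass generators and $S$-pairs and prove ${\rm in}(I_{\Pi})\subseteq J$ directly by the maximal-level-set argument, which is really the uniqueness half of Lemma \ref{effdiv}.

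Your route is self-contained and gives generation of $I_{\Pi}$ for free. It also shows that if the normalised script $(c_j)$ takes both signs, then ${\bf x}^{\bf u}$ and ${\bf x}^{\bf v}$ both lie in $J$, so the term order matters only for one-signed scripts. The paper's route, in exchange, identifies division steps with legal firings.

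Step 2 is not a real obstacle. For $i\in V_{j_0}\subseteq S$ one gets $({\bf u}-{\bf v})_i=\sum_{j\neq j_0}(m-c_j)\,e(i,V_j)\geq {\rm outdeg}_S(i)$, where $e(i,V_j)$ counts edges from $i$ to $V_j$.

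For Step 3, do the two reductions in this order. First pass to $S'=V(G)\setminus C$, with $C$ the component of $G[\overline{S}]$ containing $V_k$. Then take any component $U$ of $G[S']$. Each component of $G[S']$ is adjacent to $C$ since $G$ is connected, so $\overline{U}$ is connected. All these components are parsets because each $G[V_i]$ is connected.

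\textbf{What needs fixing.} Your leading-term claim and your Step 4 rely on describing $m_{rev}$ as a revlex order with the $V_k$-variables smallest. That is not the order. It first compares the weight $\lambda$, chosen with $b_{V_i}\cdot\lambda>0$ for $i\neq k$, and only breaks ties by the spanning-tree revlex.

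Under your description the statement is false. Take the path with edges $\{1,2\},\{2,3\}$, singleton main vertices, $V_k=\{3\}$, and revlex with $x_2>x_1>x_3$. The proposed set is $\{x_1-x_2,\,x_2-x_3\}$, with leading terms $x_2,x_2$. Yet $x_1-x_3\in I_{\Pi}$ has leading term $x_1\notin\langle x_2\rangle$.

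In the same example, the one-signed script $(-1,0,0)$ gives $x_2-x_1$. Its exponent difference vanishes on $V_k$, so your Step 4 reasoning says nothing, and in fact $x_2$ leads. The spanning-tree revlex alone would also work, by walking tree paths to the root, but ``$V_k$ smallest'' does not.

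With the actual $m_{rev}$ both points take one line:
\begin{itemize}
\item The $S$-binomial has $\lambda$-weight difference $\sum_{V_i\subseteq S}b_{V_i}\cdot\lambda>0$, so its leading term is ${\bf x}^{S\rightarrow\overline{S}}$.
\item In Step 4, $({\bf u}-{\bf v})\cdot\lambda=\sum_{j\neq k}c_j\,(b_{V_j}\cdot\lambda)<0$ unless all $c_j=0$, so ${\bf x}^{\bf v}$ would be the leading term, a contradiction.
\end{itemize}
With this repair your argument is complete. It shows the theorem holds for every monomial order refining $\lambda$, and the homogeneity and tie-breaking concerns in your last paragraph disappear.
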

\begin{theorem}\label{cellres}
     Let $(G,\Pi)$ be a pargraph and $M_{\Pi}$ be the corresponding $G$-parking function ideal. The cellular free resolution $\mathcal{H}_{\Pi}$ supported on the labelled polyhedral cell complex $\mathcal{B}_{\Pi}$ is a minimal free resolution of the quotient ring $R_{n-1}/M_{\Pi}$.
 \end{theorem}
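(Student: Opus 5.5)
The plan is to build the labelled polyhedral complex $\mathcal{B}_{\Pi}$ as a pargraph analogue of the graphical hyperplane arrangement complex used for graphs by Manjunath--Schreyer--Wilmes. We then check the two standard criteria for a labelled cell complex to support a minimal free resolution (Bayer--Sturmfels):
\begin{enumerate}
\item for every monomial ${\bf x}^{\bf b}$, the subcomplex $(\mathcal{B}_{\Pi})_{\preceq {\bf b}}$ of cells whose labels divide ${\bf x}^{\bf b}$ is acyclic over $\mathbb{K}$ (empty or with zero reduced homology);
\item whenever a cell $F$ is a proper face of $F'$, their labels differ.
\end{enumerate}

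\textbf{Setting up the complex.} Concretely, we take the bounded complex of the arrangement in $\mathbb{R}^{k-1}$ cut out by the hyperplanes $y_i-y_j=c$ for relevant edges between main vertices $V_i,V_j$, with $y_k=0$ for the sink main vertex $V_k$. Cells correspond to ordered set partitions of $\Pi$ into connected parsets, i.e.\ flags $S_1\subsetneq\cdots$ of parsets avoiding $V_k$ with each layer connected. A vertex of the complex (a maximal chain) gets the label ${\bf x}^{S\to\overline S}$-type monomial recording, for each basic vertex $v\notin V_k$, the number of relevant edges from $v$ to later blocks. Maximal cells correspond to acyclic orientations of the contracted graph $G/\Pi$ with unique sink $V_k$. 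We check that the facet labels are exactly the minimal generators of $M_{\Pi}$, which by Theorem \ref{gboftop} are the leading terms ${\bf x}^{S\to\overline S}$. The label of any cell is the lcm of the labels of its vertices.

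\textbf{Acyclicity.} This is the main step. I would show that for each ${\bf b}$, $(\mathcal{B}_{\Pi})_{\preceq{\bf b}}$ is the set of cells lying in a convex region. Divisibility by ${\bf x}^{\bf b}$ translates into inequalities $y_i-y_j\le$ (constants determined by ${\bf b}$) on the arrangement, since the exponent at $v$ counts edges from $v$ to main vertices placed later. The subcomplex is then the bounded part of a restricted arrangement intersected with a polyhedron, and such a region is contractible. The delicate points are the following:
\begin{itemize}
\item exponents are indexed by basic vertices while coordinates are indexed by main vertices, so one basic vertex's exponent constrains only one main-vertex coordinate difference summed over its edges;
\item the region must be a union of closed cells; I expect to prove this from the fact that labels are lcms over vertices.
\end{itemize}
If convexity fails directly, I would fall back on discrete Morse theory or a shelling of the region, ordering by a generic linear functional.

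\textbf{Minimality and conclusion.} For minimality, a proper face $F\subset F'$ merges two consecutive blocks of the ordered partition. Because each block induces a connected subgraph of $G/\Pi$ and $G$ is connected, some relevant edge joins the merged blocks. That edge forces a strictly larger exponent at some basic vertex, so the labels of $F$ and $F'$ differ. Together with acyclicity, this gives that $\mathcal{H}_{\Pi}$ resolves $R_{n-1}/M_{\Pi}$ minimally. I expect the main obstacle to be making the acyclicity argument rigorous in the presence of the basic-versus-main vertex mismatch.
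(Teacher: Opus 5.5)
\textbf{Where it breaks: the acyclicity step.} Your framework (Bayer--Sturmfels acyclicity plus distinct labels on covering pairs) is the right one, and your minimality argument is essentially sound: if $F$ is a facet of $F'$, some relevant edge $\{i,j\}$ has $g(i)=g(j)$ on $F$ but $g'(i)>g'(j)$ on $F'$, which raises the exponent at $i$. The paper instead inherits minimality from $\mathcal{B}_G$. The gap is the acyclicity step, which is the real content of the theorem, and the mechanism you propose for it is false.

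For $g$ in a cell, the label exponent at a basic vertex $v$ is $\#\{\{v,w\}\in E(G) : g(v)>g(w)\}$. So ``label divides $\mathbf{x}^{\sigma}$'' says that at most $\sigma_v$ of the edges at $v$ go strictly downhill. That is a disjunction of sign conditions, not a system of inequalities $y_i-y_j\le c$. The principle ``bounded complex $\cap$ polyhedron is contractible'' also fails, because $|\mathcal{B}_{\Pi}|$ itself is not convex.

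Take $G=C_4$ on the cycle $1,2,3,4$, with the singleton partition and sink $4$.
\begin{itemize}
\item $|\mathcal{B}_{\Pi}|$ is the triangle $\{p\ge 0,\ p_4=0,\ p_1+p_2+p_3=1\}$ minus the open set $\{p_2<\min(p_1,p_3)\}$. It contains $e_1$ and $e_3$ but not their midpoint.
\item Its intersection with the half-plane $\{p_2\le\epsilon\}$ is disconnected for $\epsilon<\tfrac13$.
\item For $\sigma=(2,1,2)$, the subcomplex $|\mathcal{B}_{\Pi}|_{\le\sigma}$ is the bowtie $\{\min(p_1,p_3)\le p_2\le \max(p_1,p_3)\}$: two triangles meeting only at $(\tfrac13,\tfrac13,\tfrac13)$.
\item This bowtie is not $|\mathcal{B}_{\Pi}|\cap P$ for any convex $P$. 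Such a $P$ would contain $(0,\tfrac12,\tfrac12)$ and $(\tfrac12,\tfrac12,0)$, hence $(\tfrac14,\tfrac12,\tfrac14)\in|\mathcal{B}_{\Pi}|$. But the cell of that point has exponent $2>1$ at vertex $2$.
\end{itemize}
The fallback you mention (discrete Morse theory, shelling by a generic functional) is left unspecified, so nothing in the proposal actually establishes acyclicity.

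\textbf{What the paper does instead.} The missing ingredient is the paper's Lemma~\ref{starconvexity lemma}: $|\mathcal{B}_{\Pi}|_{\le\sigma}$ is star-convex with an explicit centre. Let $S$ be the union of the supports of the labels of the $0$-cells in $|\mathcal{B}_{\Pi}|_{\le\sigma}$. Let $K$ be the component of $G[V(G)\setminus S]$ containing $n$, set $W=V(G)\setminus K$, and take $q=\chi_W/|W|$. In the example, $q$ is exactly the pinch point of the bowtie.

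The basic-versus-main mismatch you flag is resolved by showing that $W$ is a parset, so that $q\in U_{\Pi}$. Suppose some $V_i$ met both $W$ and $K$. Connectivity of $G[V_i]$ gives an edge $\{i_1,i_2\}\subseteq V_i$ with $i_1\in W$ and $i_2\in K$. Then $i_1\in S$, so $i_1\in J$ for some $0$-cell $\chi_J/|J|$ with $J\subseteq W$. Hence $i_2\in\overline{J}$, contradicting that $J$ is a parset. One then checks that $\mathbf{x}^{W\to\overline W}$ divides $\mathbf{x}^{\sigma}$. Finally, each open segment $(t,q)$ lies in a single bounded cell whose label divides $\mathbf{x}^{\sigma}$, using the Dochtermann--Sanyal sink criterion and label formula.

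\textbf{Your description of the complex needs fixing.} The construction you want is Dochtermann--Sanyal's, not Manjunath--Schreyer--Wilmes'.
\begin{itemize}
\item The hyperplanes are $y_a=y_b$, for main vertices $V_a,V_b$ joined by a relevant edge, on the slice $y_k=0$, $\sum_{a<k}|V_a|y_a=1$.
\item The $0$-cells are the points $\chi_J/|J|$ with $J,\overline J$ connected parsets and $V_k\subseteq\overline J$; they are not maximal chains. These $0$-cells carry the minimal generators $\mathbf{x}^{J\to\overline J}$.
\item The top cells (acyclic orientations with unique sink $V_k$) carry lcm labels, such as $x_1x_2x_3^2$ in the example, not generators.
\item Cells are not ordered set partitions in general. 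The cell $\{p_2>\max(p_1,p_3)\}$ above contains points with $p_1<p_3$, $p_1=p_3$ and $p_1>p_3$.
\end{itemize}
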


%The construction of the above Gr\"obner basis for the toppling ideal yields an initial ideal that extends the notion of the $G$-parking function ideal from graphs to pargraphs. In Subsection \ref{cellresofparkingidelas}, we construct a minimal cellular free resolution of the $G$-parking function ideal of a pargraph, and establish its Cohen–Macaulay property.

{\bf Acknowledgements:} The author thanks Kiran Kumar A.S. for several helpful discussions related to the Picard group. We used Macaulay2 for some of the computations in Sections \ref{AsP} and \ref{FanChip}.

\section{ Combinatorics of Pargraphs}\label{preparcyc_subsect}
Let $G=(V(G),E(G))$ be a connected multigraph with vertex set $V(G)=\{1,\dots,n\}$.
%Let $G=(V(G),E(G))$ be a connected multigraph (with no loops) on the set of vertices $V(G)=\{1,\dots,n\}$ and let $E(G)$  be the edge set of $G$. 
For a subset $S$ of $V(G)$, we use the notation $G[S]$ to denote the subgraph of $G$ induced by $S$.
A partition $\Pi=\{V_1,\dots,V_k\}$ of $V(G)$ is called a \emph{connected $k$-partition} (or simply a \emph{connected partition}) if $G[V_i]$ is connected for all $i$.  
\begin{definition}\begin{enumerate}
    \item A pargraph is an ordered pair $(G,\Pi)$ where $G$ is a graph and $\Pi=\{V_1,\dots,V_k\}$ is a connected partition of $V(G)$.
 
\item We call $(G,\Pi)$ a pseudo-pargraph if $G$ or some of the induced subgraphs $G[V_i]$ are not necessarily connected.
\end{enumerate}
\end{definition}
    
 We call the vertices of $G$ as the \emph{basic vertices} of $(G,\Pi)$ and the elements of $\Pi$ as the \emph{main vertices} of $(G,\Pi)$. An edge of $G$ that connects two basic vertices of $(G,\Pi)$ that lie in the same main vertex is called a \emph{basic edge} of $(G,\Pi)$.
%An edge $\{i,j\}$ of $G$ is called a \emph{relevant edge} of $(G,\Pi)$ if $i\in  V_i, j\in V_j$, and $V_i \neq V_j$. 
%{\color{blue}Recall the notion  of relevant edges of a pargraph defined in the introduction}. %Basic edges of $(G,\Pi)$ give information about the connectivity of basic vertices in a main vertex of $(G,\Pi)$, and 
The remaining edges of $(G,\Pi)$ are called relevant edges (defined in the introduction), which connect basic vertices lying in two
different main vertices of $(G,\Pi)$. For example, the set of basic edges of $(C_7,\Pi)$ (as in Example \ref{exa23}) is $\{\{1,7\},\{6,7\},\{3,4\}\}$, and the set of relevant edges is \{\{1,2\},\{2,3\},\{4,5\},\{5,6\}\}.

We denote the Laplacian matrix of $G$ by $\Lambda_{G}$. It is defined as $\Lambda_{G}=D-A_G$ where $D$ is the diagonal matrix whose $(i,i)$-entry is the valence or degree of the vertex $i$. The matrix $A_G=(a_{i,j})$ is called the \emph{adjacency matrix} of $G$ where $a_{i,j}$ denotes the total number of edges between the vertices $i$ and $j$.

%Let $b_j$ be the row of the Laplacian matrix $\Lambda_{G}$ of $G$ that corresponds to the vertex $j\in V(G)$. We define the \emph{Laplacian lattice} of $(G,\Pi)$ as the lattice generated by the vectors $b_{V_1},\dots,b_{V_k}$ where $b_{V_i}:=\sum_{j\in V_i}b_j\in \mathbb{Z}^{n}$.

\begin{definition}\label{deflaplattice}
\begin{enumerate}
    \item Let $b_j$ be the row of the Laplacian matrix $\Lambda_{G}$ of $G$ that corresponds to the vertex $j\in V(G)$. We define the \emph{Laplacian lattice} of $(G,\Pi)$ as the lattice generated by the vectors $b_{V_1},\dots,b_{V_k}$ where $b_{V_i}:=\sum_{j\in V_i}b_j\in \mathbb{Z}^{n}$.
    \item A subset $S$ of $V(G)$ is called a parset of $(G,\Pi)$ if $S=\cup_{i \in U} V_i$ where $U \subseteq \{1,\dots,k\}$. A parset $S$ of $(G,\Pi)$ is called a connected parset if $G[S]$ is connected.
\end{enumerate}

\end{definition}

The notions of parsets and connected parsets are useful in the study of the chip-firing game on pargraphs. We will later use these notions when we deal with certain ideals called toppling ideals and $G$-parking function ideals associated with pargraphs.

For a basic vertex $i\in V_i$, let $E(i)=\{\{i,j\}\mid j\in V_i \text{ and } i\neq j\}$ denote the set of all possible unordered pairs containing $i$ and an element $j (\neq i)\in V_i$. Note that the set of basic edges of $(G,\Pi)$ that are incident on $i$ is a subset of $E(i)$. Let ${{\rm rel}_{E}(i)}$ be the set of all relevant edges of $(G,\Pi)$ that are incident on $i$, and ${{\rm rel}_{E}(i,j)}$ be the set of relevant edges between vertices $i$ and $j$.

For any two basic vertices $i,j$ in a main vertex $V_i$, let
$x_e$ be the variable corresponding to $e=\{i,j\}$. Consider the weighted Laplacian matrix 
$L(\mathbf{x})$ which is defined as \begin{equation}
(L(\mathbf{x}))_{ij} = 
\begin{cases} 
-x_{e}, & \text{if } i \neq j \text{ and } i,j \text{ lie in the same main vertex} \\ 
-|{\rm rel}_{E}(i,j)|, & \text{if } i \neq j \text{ and } i,j \text{ lie in distinct main vertices} \\ 
|\mathrm{rel}_{E}(i)| + \sum_{e \in E(i)} x_e, & \text{if } i = j 
\end{cases}
\end{equation}
Since the pargraphs we consider arise from undirected graphs, we assume  $x_{ij}=x_{ji}$ for all $e=\{i,j\}$. Note that the weighted Laplacian matrix $L(\mathbf{x})$ can be written as sum of two matrices $L_{R}$ and $L_{B}$ where 
\begin{equation}
(L_R)_{ij} = 
\begin{cases} 
0, & \text{if } i \neq j \text{ and } i,j \text{ lie in the same main vertex} \\ 
-|{\rm rel}_{E}(i,j)|, & \text{if } i \neq j \text{ and } i,j \text{ lie in distinct main vertices} \\ 
|\mathrm{rel}_{E}(i)|, & \text{if } i = j 
\end{cases}
\end{equation}
and 
\begin{equation}
(L_B)_{ij} = 
\begin{cases} 
-x_{e}, & \text{if } i \neq j \text{ and } i,j \text{ lie in the same main vertex} \\ 
0, & \text{if } i \neq j \text{ and } i,j \text{ lie in distinct main vertices} \\ 
\sum_{e \in E(i)} x_e, & \text{if } i = j 
\end{cases}.
\end{equation}
The matrix $L_R$ encodes information about the relevant edges of $(G,\Pi)$.
\begin{definition}\label{laplacianpoly} Let $L(\bf{x})$ be the weighted Laplacian matrix of a pargraph $(G,\Pi)$ and let $(L(\bf{x}))_{\rm red}$ be the matrix obtained from $L(\bf{x})$ by removing the last row and column. We define:
\begin{enumerate}
    \item The reduced Laplacian matrix of $(G,\Pi)$ to be the matrix  $(L(\bf{x}))_{\rm red}$.
    \item The reduced Laplacian polynomial of $(G,\Pi)$ to be the polynomial\newline ${\rm det}((L{(\bf{x}}))_{\rm red}) \in \mathbb{Z}[{\bf x}]=\mathbb{Z}[x_e \mid e\in E(i), i\in V(G)] $.
\end{enumerate}
\end{definition}

\begin{example}\label{exa23}{\rm Consider the pargraph $(C_7,\Pi)$ where $C_7$ is the cycle graph on $7$-vertices and $\Pi=\{\{1,6,7\},\{2\},\{3,4\},\{5\}\}$. For the basic vertex $1$, $E(1)=\{ \{1,7\},\{1,6\}\}, \mathrm{rel}_{E}(1)=\{\{1,2\}\}, {\rm rel}_{E}(1,2)=\{\{1,2\}\},$ and ${\rm rel}_{E}(1,3)$ is the empty set.
\begin{center}
\begin{tikzpicture}[
    scale=0.65,
    every node/.style={font=\sffamily\small},
    vertex/.style={circle, fill=black, inner sep=1.5pt}
]
    % Radius of the heptagon
    \def\R{1.5}

    % Vertices (C_7 cycle arranged symmetrically)
    \node[vertex, label=below:7]        (v7) at (90:\R) {};
    \node[vertex, label=above left:1]   (v1) at (90 + 1*360/7:\R) {};
    \node[vertex, label=left:2]         (v2) at (90 + 2*360/7:\R) {};
    \node[vertex, label=below:3]        (v3) at (90 + 3*360/7:\R) {};
    \node[vertex, label=below:4]        (v4) at (90 + 4*360/7:\R) {};
    \node[vertex, label=right:5]        (v5) at (90 + 5*360/7:\R) {};
    \node[vertex, label=above right:6]  (v6) at (90 + 6*360/7:\R) {};

    % Cycle edges
    \draw[thick] (v1) -- (v2) -- (v3) -- (v4) -- (v5) -- (v6) -- (v7) -- (v1) -- cycle;

    % Top boundary enclosing {1, 7, 6}
    \draw[thick] (0, 1.1) ellipse (1.60cm and 0.60cm);

    % Bottom boundary enclosing {3, 4}
    \draw[thick] (0, -1.3) ellipse (1.05cm and 0.45cm);

\end{tikzpicture}
\end{center}
In this case, the weighted Laplacian matrix $$L(\bf{x})=\begin{bmatrix}
1 + x_{16} + x_{17} & -1 & 0 & 0 & 0 & -x_{16} & -x_{17} \\
-1 & 2 & -1 & 0 & 0 & 0 & 0 \\
0 & -1 & 1 + x_{34} & -x_{34} & 0 & 0 & 0 \\
0 & 0 & -x_{34} & 1 + x_{34} & -1 & 0 & 0 \\
0 & 0 & 0 & -1 & 2 & -1 & 0 \\
-x_{16} & 0 & 0 & 0 & -1 & 1 + x_{16} + x_{67} & -x_{67} \\
-x_{17} & 0 & 0 & 0 & 0 & -x_{67} & x_{17} + x_{67}
\end{bmatrix}$$ and the reduced Laplacian polynomial of $(C_7,\Pi)$ is $4x_{16}x_{17}x_{34} + 4x_{16}x_{34}x_{67} + 4x_{17}x_{34}x_{67} + x_{16}x_{17} + x_{17}x_{34} + x_{16}x_{67} + x_{17}x_{67} + x_{34}x_{67} \in \mathbb{Z}[x_{16},x_{17},x_{34},x_{67}]$}.\qed
\end{example}

Any binary assignment of the variables $x_{ij}\in \{0,1\}$ naturally defines a graph $H$ on $V(G)$ such that the Laplacian matrix $\Lambda_{H}$ is the specialisation of $L(\bf{x})$ under this assignment.  Note that $H$ might not be connected. Moreover, the pseudo-pargraph $(H,\Pi)$ has the same Laplacian lattice as $(G,\Pi)$. This highlights the importance of the notion of a weighted Laplacian matrix, which helps us construct a family of pseudo-pargraphs having the same Laplacian lattice as $(G,\Pi)$.

\subsection{Chip-Firing Game on Pargraphs}\label{chipfireParg}
%We start with the chip-firing game on a connected multigraph $G$. The chip-firing game is a dynamical system on $G$. The game starts with an assignment of integer entries, called an initial configuration, to each vertex of $G$. These entries are called chips or dollars. At each step of the game, some vertex, say $i$, sends one chip along all the edges incident to it. As a consequence of this, the vertex $i$ loses its valence numbers of chips, and every other vertex $j$ receives $a_{i,j}$ chips where $a_{i,j}$ is the total number of edges between vertices $i$ and $j$. 
Recall the chip-firing game on a multigraph defined in the introduction. The initial assignment of integer entries to the vertices of $G$ is called an \emph{initial configuration}.
A natural question that arises in the chip-firing game is: for any given initial configuration, does there exist a finite sequence of chip-firing moves such that, in the end, all the vertices have a non-negative number of chips? In general, the answer is no. Note that the total number of chips at each stage of the chip-firing game remains the same. Therefore, for an initial configuration with a negative total number of chips, it is not possible to find such a finite sequence of chip-firing moves. For a configuration with total number of non-negative chips, the existence of such a sequence subtly depends on the structure of the underlying graph. We refer to \cite{corry2018divisors},\cite{klivans2018mathematics}
for more details on the chip-firing game on a graph.

The chip-firing game on a pargraph $(G,\Pi)$ is similar to the chip-firing game on $G$, but here we are only allowed to fire from the parsets of the pargraph $(G,\Pi)$. In the chip-firing from a parset $S$, all vertices of $G$ in $S$ fire chips simultaneously, and this can also be realised as a sequential chip-firing process from the main vertices whose union forms $S$.
A \emph{divisor} (or \emph{configuration}) on $(G,\Pi)$ is an assignment of an integer to every basic vertex of $(G,\Pi)$. Every divisor $D$ on $(G,\Pi)$ defines a vector $(d_1,\dots,d_n)\in \mathbb{Z}^{n}$ where $d_i$ denotes the integer assigned to the vertex $i\in V(G)$ by the divisor $D$. We define the \emph{degree} of a divisor $D=(d_1,\dots,d_n)$ as ${\rm deg }(D):=\sum_{i=1}^{n}d_i$.
Two divisors $D_1$ and $D_2$ on $(G,\Pi)$ are said to be \emph{chip-firing equivalent} if one is obtained from another by a finite sequence of chip-firing moves on the pargraph. In particular, $D_1$ and $D_2$ on $(G,\Pi)$ are chip-firing equivalent if and only if $D_1 - D_2 \in L_{\Pi}$.
We call a divisor $D$ on $(G,\Pi)$ an \emph{effective divisor} if $D(i)\geq 0$ for all $1\leq i\leq n$. For a divisor $D$ on $(G,\Pi)$, a parset $S$ is called \emph{legal} to fire if $D(i)\geq {\rm outdeg}_{S}(i)$ for all $i\in S$, where ${\rm outdeg}_{S}(i)$ is the number of edges between the vertex $i$ and the vertices in $V(G)\setminus S$.  

\begin{definition}
A divisor $D$ on $(G,\Pi)$ is called a $V_{k}$-reduced divisor if it satisfies the following properties.
\begin{enumerate}
    \item $D(i) \geq 0$ for all $i\in V(G)\setminus V_k$;
    \item for every parset $S$ of $V(G)$ that does not contain $V_k$, there exists a (basic) vertex $j\in S$ such that $D(j)<{\rm outdeg}_{S}(j)$. %where ${\rm outdeg}_{S}(j)$ is the number of edges between $j$ and $V(G)\setminus S$.
\end{enumerate}
         \end{definition}

Let $T$ be a spanning tree of $G$ such that $T[V_i]$ is a spanning tree of $G[V_i]$ for all $1\leq i \leq k$.
The existence of such a spanning tree of $G$ holds, as $G$ is connected and $\Pi$ is a connected partition of $V(G)$. For a basic vertex $v\in V_k$, the pair $(T,v)$ is called a \emph{spanning tree of the pargraph} $(G,\Pi)$ rooted at the basic vertex $v$.

Let $v_1,\dots,v_n$ be an ordering of the basic vertices of $(G,\Pi)$ in such a way that the index  $i<j$ if $v_i$ lies on the unique path on $(T,v)$ connecting $v_j$ to $v$. In this ordering, $v_1=v$ and such an ordering of the basic vertices is called a \emph{tree ordering} compatible with the spanning tree $T$. A tree ordering of the basic vertices induces a total order $<'$ on the set of divisors on $(G,\Pi)$. For divisors $D_1$ and $D_2$ on $(G,\Pi)$,   $D_1<'D_2$ if either  ${\rm deg}(D_1)<{\rm deg}(D_2)$, or ${\rm deg}(D_1)={\rm deg}(D_2)$ and $D_1(v_i)>D_2(v_i)$ where $i$ is the smallest index such that $D_1(v_i)\neq D_2(v_i)$. Informally, in this ordering, every legal firing from a parset not containing $V_k$ sends some chips ``closer'' to $V_k$. As a consequence of this, the new divisor that we obtain will always be smaller than the previous one.

\begin{lemma}\label{effdiv} Every effective divisor on $(G,\Pi)$ is chip-firing equivalent to a unique $V_k$-reduced divisor.
    \end{lemma}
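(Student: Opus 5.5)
The plan is to follow the classical proof for graphs (Baker--Norine, Dhar's burning algorithm), replacing vertices by main vertices and sets of vertices by parsets. The main tool is the total order $<'$ induced by a tree ordering compatible with a spanning tree $(T,v)$ of the pargraph with $v\in V_k$. The proof splits into existence and uniqueness.

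\emph{Existence.} Start from an effective divisor $D$ and build a sequence of equivalent divisors, each satisfying condition (1) of $V_k$-reducedness. If $D$ is not yet $V_k$-reduced, condition (2) fails, so some parset $S$ with $V_k\not\subseteq S$ has $D(j)\ge \mathrm{outdeg}_S(j)$ for all $j\in S$. Firing $S$ is then legal. It keeps all vertices of $S$ nonnegative, and it only adds chips outside $S$, so condition (1) is preserved.

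I then show the firing strictly decreases $D$ in $<'$. The degree is unchanged. Let $v_i$ be the smallest-index vertex whose value changes. Since $G$ is connected and $S\neq V(G)$, some edge leaves $S$. Consider a vertex $u\notin S$ that is earliest in the tree ordering among vertices adjacent to $S$, or more carefully the first vertex outside $S$ on tree paths from $S$ to $v$. Because $S$ is a union of main vertices and $T[V_i]$ spans each $V_i$, the tree path from a vertex of $S$ toward $v\in V_k\not\subseteq S$ leaves $S$ through a relevant tree edge. I expect that the smallest-index changed vertex lies outside $S$ and gains chips, which makes the new divisor smaller.

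The delicate point is that this smallest changed vertex might instead lie in $S$ and lose chips, since vertices of $S$ on tree paths can come earlier in the ordering. This is the step I expect to be the main obstacle. If the tree-order argument breaks, I would substitute a cleaner potential: the vector $\big(D(V_k),\ \text{chips at distance } 1 \text{ from } V_k,\dots\big)$ indexed by the distance of main vertices from $V_k$ in the quotient graph. Alternatively I would bound the number of firings using the fact that chips outside $V_k$ stay nonnegative and $D(V_k)$ is bounded above by $\deg D$.

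Termination then follows by finiteness. There are only finitely many divisors of fixed degree that are nonnegative off $V_k$ and have $D(V_k)\le \deg D$, since the entries inside $V_k$ are unrestricted individually but can be controlled because chips in $V_k$ only increase. If this is not clean, I would use the equivalent Dhar-type procedure instead: repeatedly fire the maximal legal parset. This is well defined because the union of legal parsets is legal, as outdegree to the complement only decreases when $S$ grows.

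\emph{Uniqueness.} Suppose $D_1,D_2$ are $V_k$-reduced and $D_1-D_2=\sum_i c_i b_{V_i}$. Normalise so that $c_k$ is minimal among main vertices, possible since $\sum_i b_{V_i}=0$. If the firing vector is nonconstant, let $S$ be the union of the main vertices attaining the maximum value of $c_i$, so $V_k\not\subseteq S$.

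For $j\in S$, the net change at $j$ is at least $\mathrm{outdeg}_S(j)$ leaving $j$. Since $D_1(j)\ge 0$, this gives $D_2(j)\ge \mathrm{outdeg}_S(j)$ for every $j\in S$, with signs arranged according to which divisor the chips are moved from. This contradicts condition (2) for $D_2$. Hence the firing vector is constant and $D_1=D_2$.
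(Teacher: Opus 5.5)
Your strategy is the paper's: legal firings of parsets avoiding $V_k$ with strict decrease in $<'$ for existence, and an extremal level set of the firing vector for uniqueness. However, the existence half is not actually established. The strict decrease in $<'$ is the entire termination argument, and you leave it at ``I expect''. The fallbacks are only sketched, and the ``maximal legal parset'' variant does not address termination at all.

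The missing observation is one line: look at the \emph{first vertex of $S$}, not the first neighbour of $S$. Let $u$ be the vertex of $S$ with smallest index. Then $u\neq v$, because $v\in V_k$ and $S\cap V_k=\emptyset$. So its parent $p(u)$ in $(T,v)$ has smaller index and, by minimality of $u$, lies outside $S$. Since $\{u,p(u)\}\in E(G)$, the vertex $p(u)$ gains a chip. Every vertex of $S$ comes after $p(u)$, and vertices outside $S$ never lose chips. Hence the first changed coordinate is a gain, i.e.\ the new divisor is $<'$-smaller, and the scenario you feared (a vertex of $S$ changing first) cannot happen.

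For finiteness, use that every divisor in the sequence is \emph{effective}: legality protects $S$, and all other vertices only gain. So the sequence lies in the finite set of effective divisors of degree $\deg D$. Your set ``nonnegative off $V_k$ with $D(V_k)\le\deg D$'' is infinite when $|V_k|\ge 2$. A strictly $<'$-decreasing sequence in a finite set stops, and it stops exactly at a $V_k$-reduced divisor. Your distance potential would also have worked if carried out: if $d_0$ is the least distance from $V_k$ of a main vertex in $S$, chip totals at distance $<d_0-1$ are unchanged and the total at distance $d_0-1$ strictly increases.

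In the uniqueness half, ``normalise so that $c_k$ is minimal, possible since $\sum_i b_{V_i}=0$'' is false. That relation only lets you shift $c$ by a constant, which never changes which main vertices are extremal. The correct move, as in the paper, is to swap $D_1$ and $D_2$ (replace $c$ by $-c$) if necessary. Since $c$ is nonconstant, $V_k$ cannot lie in both the maximum and the minimum level set, so after a possible swap the maximum level set $S$ avoids $V_k$.

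Your inequality also has the roles reversed. Write $D_2=D_1-\Lambda_G\sigma$ with $\sigma=\sum_i c_i\chi_{V_i}$. For $j\in S$ one gets $D_2(j)\le D_1(j)-\mathrm{outdeg}_S(j)$. It is then $D_2(j)\ge 0$ (condition (1), since $j\notin V_k$) that makes $S$ legal for $D_1$, contradicting that $D_1$ is $V_k$-reduced. With these repairs your argument coincides with the paper's proof.
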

\begin{proof}
    Let $D$ be an effective divisor on $(G,\Pi)$. If $D$ is not $V_k$-reduced, then we perform a sequence of legal chip-firing moves from the parsets of $(G,\Pi)$ not containing $V_k$. Let $D_1(=D),D_2,\dots$ be the sequence of effective divisors obtained as a consequence of the above legal chip-firing moves. Each effective divisor $D_i$ can be expressed as $\tilde{D}_i+\sum_{v_l\in V_k}D_i(v_l)[v_l] $ where  $\tilde{D}_i(v_j)=D_i(v_j)\chi_{(V(G)\setminus V_k)}(v_j)$. Note that ${\rm deg}(\tilde{D}_1 )\geq {\rm deg}(\tilde{D}_i)\geq {\rm deg}(\tilde{D}_{i+1})\geq 0 $, and $D_{i+1}<'D_{i}$ for all $i$. Hence, there will only be finitely many distinct divisors in the sequence, and it stops after a finite number of legal chip-firing moves. As a consequence of this, we obtain a $V_k$-reduced divisor that is chip-firing equivalent to $D$.
    
    Suppose that there exist two $V_k$-reduced divisors, say $D_1$ and $D_2$, that are chip-firing equivalent to $D$. This implies the divisor $D_1-D_2\in L_{\Pi}$, i.e. $D_2=D_1- \Lambda_{G}\sigma$ where $\sigma\in \mathbb{Z}^{n}$ such that $\sigma(v_i)=\sigma(v_j)$ if $v_i,v_j\in V_l$ for some $l$. If $\sigma$ is a constant function, then $D_1=D_2$. Suppose that $\sigma$ is not a constant function. Let $m={\rm max}\{\sigma(v_i)\mid v_i\in V(G)\}$.
    Let $S=\{v_i\in V(G)\mid \sigma(v_i)=m\} $. Note that $S$ is a parset of $V(G)$. Furthermore, we can assume that $S\subseteq V(G)\setminus V_k$ (otherwise, we work with $D_1=D_2-\Lambda_{G}\sigma$). If $v_i\in S$, then $\sigma(v_i)-\sigma(v_j)\geq 1$ for $v_j\notin S$. Hence,
    $0\leq D_2(v_i)=D_1(v_i)-\sum_{(v_i,v_j)\in E(G)}(\sigma(v_i)-\sigma(v_j))\leq D_1(v_i)-{\rm outdeg}_{S}(v_i)$. This implies that we can legally fire from the parset $S$ which gives us a contradiction as $D_1$ is a $V_k$-reduced divisor. Hence, the chip-firing equivalence class of each effective divisor contains a unique $V_k$-reduced divisor.
\end{proof}

\section{Algebraic Aspects of Pargraphs}\label{AsP}
Throughout this section, we assume that $G$ is a connected multigraph on the vertex set $V(G)=\{1,\dots,n\}$, and $\Pi=\{V_1,\dots,V_k\}$ is a connected partition of $V(G)$.

\subsection{Picard Group of a Pargraph}

Let $D$ be a divisor on $(G,\Pi)$ and let $[D]$ be the chip-firing equivalence class of $D$. For any two divisors $D_1$ and $D_2$, we define $D_1+D_2=\sum_{i\in V(G)}(D_1(i)+D_2(i))[i]$. Note that if $D_1\in [D_2]$ and $D_3\in [D_4]$, then $D_1+D_3\in [D_2+D_4]$. Under the above additive operation, the set of all chip-firing equivalence classes of divisors on $(G,\Pi)$ forms an abelian group and is isomorphic to ${\mathbb{Z}}^{n}/L_{\Pi}$. We call this group as the Picard group of the pargraph $(G,\Pi)$ (as defined in the introduction). In this subsection, we study the freeness of this group in order to understand the toric nature of the toppling ideal $I_{\Pi}$.

From the structure theorem for finitely generated abelian groups, we know that the Picard group of $(G,\Pi)$ is a direct sum of a free abelian group and a torsion group, which we refer to as the \emph{Jacobian subgroup} of $(G,\Pi)$. A standard technique used to study the structure of a quotient group $\mathbb{Z}^n/A$ is to compute the Smith normal form of a matrix $M$ whose columns generate the subgroup $A$ \cite[Chapter 7]{miller2005combinatorial}, \cite{jun2025picard}. In the following, we briefly recall the notion of the Smith normal form of a matrix. %and some results associated with it.}
%In case of graphs, we know that the Picard group of a graph ${\rm Pic}(G)$ is free if and only if $G$ is a tree. In the following, we provide a sufficient condition for the Picard group of a pargraph to be free. Let $G_l$ be a connected graph with $V(G_l)=V(G)$. For each $V_i\in \Pi$, we define $b_{l,V_i}=\sum_{v_j \in V_i}b_{l,v_j}$ where $b_{l,v_j}$ is the column (or row) corresponding to the vertex $v_j$ in the Laplacian matrix of $G_l$.
%\begin{theorem}\label{freeness of picard group}
 %   Let $(G,\Pi)$ be a pargraph where $\Pi=\{V_1,\dots,V_k\}$ is a connected partition of $V(G)$. Let $G_1,\dots,G_m$ be a finite collection of connected graphs on the same vertex set as $G$, and let $d_{1j},\dots,d_{nj}$ be the invariant factors (in Smith normal form) of $G_j$. If $b_{l,V_t}=b_{V_t}$ for all $1\leq t \leq k, 1\leq l \leq m$, and ${\rm gcd}\{d_{(n-1)j}\mid  1\leq j \leq m\}=1$, then the Picard group of $(G,\Pi)$ will be free.
%\end{theorem}
%\section{Preliminaries}

Let $R$ be a principal ideal domain and let $M_{m,n}(R)$ be the set of all $m\times n$ matrices with entries in $R$. Two matrices $A,B\in M_{m,n}(R)$ are called \emph{equivalent} if there exist invertible matrices $U\in M_{m,m}(R)$ and $W\in M_{n,n}(R)$ such that $A=UBW$.

\begin{definition}\label{defsminf}
    The Smith normal form of a matrix $A\in M_{m\times n}(R)$ is a matrix $Q=(q_{i,j})\in M_{m\times n}(R)$ that satisfies the following properties:
    \begin{enumerate}
    \item $A$ and $Q$ are equivalent.
        \item $q_{i,i}\vert q_{i+1,i+1}$, i.e. $q_{i,i}$ divides $q_{i+1,i+1}$   for all $i$.
        \item $q_{i,j}=0$ if $i\neq j$.
    \end{enumerate}
\end{definition}
Since the Smith normal form $Q$ is a diagonal matrix, we use the notation $q_i$ instead of $q_{i,i}$ to denote its $(i,i)$-th entry.
In case of a principal ideal domain, every matrix in $M_{m\times n}(R)$ has a unique Smith normal form up to units (\cite[Theorem 2.1]{stanley2016smith}). The diagonal entries $q_{i}$ in the Smith normal form of $A$ are known as the \emph{invariant factors} of $A$. We can find the Smith normal form using invertible elementary row and column operations on $A$ (\cite{NEWMAN1997367}). Another method of finding the Smith normal form is by computing the minors of $A$. Let $I_{k}(A)$ be the ideal generated by the set of all $k\times k$ minors of $A$. Note that $I_{k}(A)=0$ if $k>{\rm min}\{m,n\}$. In case of a principal ideal domain, $I_{k}(A)$ is generated by the gcd of all $k\times k$ minors of $A$. The following result connects the invariant factors with minors of $A$.
\begin{theorem}\label{minorsandinavrainats}\cite[Theorem 2.4]{stanley2016smith}
    Let $A\in M_{m\times n}(R)$ where $R$ is a principal ideal domain. Let $q_{1}\vert q_{2}\vert \dots$ be the invariant factors of $A$. We have the following:\begin{enumerate}
        \item  $ \langle q_{1}\cdots q_{k} \rangle =I_{k}(A)$ for all $1\leq k \leq {\rm min}\{m,n\}$;
       \item $q_{j}=0$ for all $j\geq {\rm min}\{m,n\}$.
    \end{enumerate}
\end{theorem}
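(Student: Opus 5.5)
The plan is to reduce to the diagonal case. Two facts are needed: the ideals $I_k(A)$ are invariant under equivalence of matrices, and they can be computed directly for a matrix already in Smith normal form. Since the Smith normal form $Q$ of $A$ is equivalent to $A$ by Definition \ref{defsminf}, part (1) then follows by comparing $I_k(A)=I_k(Q)$ with the explicit value of $I_k(Q)$.

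\textbf{Step 1 (invariance).} Let $U\in M_{m,m}(R)$ be arbitrary. By the Cauchy--Binet formula, which holds over any commutative ring, each $k\times k$ minor of $UA$ is an $R$-linear combination of $k\times k$ minors of $A$. Concretely, the minor with row set $I$ and column set $J$ equals $\sum_{K}\det(U_{I,K})\det(A_{K,J})$, where $K$ ranges over $k$-subsets of the rows. Hence $I_k(UA)\subseteq I_k(A)$, and the same argument on the right gives $I_k(AW)\subseteq I_k(A)$. If $U,W$ are invertible, applying this to $A=U^{-1}(UAW)W^{-1}$ gives the reverse inclusion. So $I_k(UAW)=I_k(A)$, and in particular $I_k(A)=I_k(Q)$.

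\textbf{Step 2 (diagonal computation).} For the diagonal matrix $Q$, take a $k\times k$ submatrix with row set $I$ and column set $J$. If $I\neq J$, some chosen row meets no chosen column on the diagonal, so that row of the submatrix is zero and the minor vanishes. If $I=J=\{i_1<\dots<i_k\}$, the minor is $q_{i_1}\cdots q_{i_k}$, with $i_j\ge j$. The divisibility chain $q_1\mid q_2\mid\cdots$ gives $q_j\mid q_{i_j}$ for each $j$, so $q_1\cdots q_k$ divides every nonzero minor. It is itself the minor for $I=J=\{1,\dots,k\}$, so it generates $I_k(Q)$. This proves $\langle q_1\cdots q_k\rangle=I_k(A)$ for $1\le k\le\min\{m,n\}$.

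\textbf{Step 3 (part (2)).} For indices beyond $\min\{m,n\}$ there is no diagonal position, so with the usual convention the corresponding invariant factors are $0$; this is consistent with $I_k(A)=0$ for $k>\min\{m,n\}$. The indexing in (2) is off by one: the case $j=\min\{m,n\}$ does not follow from this convention. The statement that does follow from part (1) is about the rank. If $r$ is the rank of $A$, then $I_k(A)=0$ exactly for $k>r$. From part (1), $q_1\cdots q_r\neq 0$ and $q_1\cdots q_{r+1}=0$, and since $R$ is a domain this forces $q_{r+1}=0$. The divisibility chain then gives $q_j=0$ for all $j>r$. Under this reading, part (2) says that the trailing invariant factors vanish.

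The only nontrivial ingredient is the Cauchy--Binet expansion in Step 1, which I would quote as standard. The main care point is the bookkeeping in Step 2 showing that off-diagonal choices of rows and columns give zero minors.
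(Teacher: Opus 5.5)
Your proof of (1) is correct: invariance of $I_k$ under equivalence via Cauchy--Binet, followed by a direct computation for the diagonal form, is the standard proof of the cited result. The paper itself gives no proof beyond the citation.

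You are also right that (2) is misstated. It is actually false at $j=\min\{m,n\}$, not merely unproved: the $n\times n$ identity matrix has $q_n=1$. The corrected form you prove, $q_j=0$ for $j>{\rm rank}(A)$, is exactly what the paper uses later when it deduces $a_k=0$ from ${\rm rank}(M)=k-1$ in the proof of Theorem \ref{freeness of picard group}.
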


In \cite[Theorem 1]{duffner2015interlacing}, Duffner and Silva established interlacing inequalities for the invariant factors of a matrix and its submatrices. Their result holds for matrices over \emph{elementary divisor duo rings}. Here, we restate it for $R=\mathbb{Z}$ which is an elementary divisor duo ring.
\begin{theorem}\label{dufsil}
Let $U\in M_{p\times q}(\mathbb{Z})$ be a submatrix of the matrix $W\in M_{m\times n}(\mathbb{Z})$. If $u_1\vert u_2 \dots$ and $w_1\vert w_2\dots $ are invariant factors of $U$ and $W$, respectively, then $w_i\vert u_i \vert w_{i+m+n-p-q}$ for all $i\geq 1$.
\end{theorem}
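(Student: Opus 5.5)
Since this is a restatement of \cite[Theorem 1]{duffner2015interlacing} over $\mathbb{Z}$, one could simply cite it. To give a self-contained argument, I would reduce to deleting a single row or a single column. Every submatrix $U$ of $W$ is reached by $(m-p)+(n-q)$ successive deletions of one row or one column. The claimed shift $m+n-p-q$ is exactly the number of deletions. The divisibility relations compose: if $w_i\vert u'_i\vert w_{i+1}$ and $u'_i\vert u_i\vert u'_{i+1}$, then $w_i\vert u_i\vert w_{i+2}$. So by induction it suffices to prove $w_i\vert u_i\vert w_{i+1}$ when $U$ is obtained from $W$ by deleting one row or one column.

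Next I would translate to modules. Write $\operatorname{coker}A$ for the cokernel of the map $\mathbb{Z}^{\#\text{cols}}\to\mathbb{Z}^{\#\text{rows}}$ given by $A$. Its invariant factors (including zeros for free summands and $1$'s for trivial summands) are the $q_i$ of $A$, by the Smith normal form and Theorem~\ref{minorsandinavrainats}.
\begin{itemize}
\item Deleting a column: the columns of $U$ generate a sublattice of the column lattice of $W$, so there is a surjection $\operatorname{coker}U\to\operatorname{coker}W$. Its kernel is cyclic, generated by the image of the deleted column.
\item Deleting the $j$-th row: projecting away the $j$-th coordinate gives a surjection $\operatorname{coker}W\to\operatorname{coker}U$. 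Its kernel is cyclic, generated by the image of $e_j$.
\end{itemize}
In both cases two finitely generated abelian groups $M\twoheadrightarrow N$ differ by a cyclic kernel. The key lemma is then: if $N=M/C$ with $C$ cyclic, the invariant factors satisfy $\mu_i\vert\nu_{i+1}$ and $\nu_i\vert\mu_{i}$, i.e.\ they interlace with shift one. Here the invariant factors are padded and indexed so that the sequences line up with the matrix sizes.

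To prove the lemma, I would localise at each prime $\ell$. Divisibility of integers can be checked valuation by valuation, with $v_\ell(0)=\infty$. Over the DVR $\mathbb{Z}_{(\ell)}$ I would use the counting function
\[
c_M(t)=\dim_{\mathbb{F}_\ell}\bigl(\ell^tM/\ell^{t+1}M\bigr),
\]
which equals the number of invariant factors of valuation $>t$. A surjection $M\to N$ restricts to a surjection $\ell^tM\to\ell^tN$ whose kernel $\ell^tM\cap C$ is cyclic. Hence
\[
c_N(t)\le c_M(t)\le c_N(t)+1 \quad\text{for all } t.
\]
Interlacing of the valuation sequences is equivalent to these inequalities for every $t$, which yields the lemma.

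The step I expect to be the main obstacle is the indexing bookkeeping. Invariant factors are listed increasingly by divisibility, with $0$'s at the end. The module $\operatorname{coker}A$ only sees $\#\text{rows}$ factors, while Theorem~\ref{minorsandinavrainats} indexes by $\min\{m,n\}$. One must align the factors equal to $1$ and the factors equal to $0$ correctly so that the inequalities on $c(t)$ translate into exactly $w_i\vert u_i\vert w_{i+1}$ rather than an off-by-one variant. I would handle this by checking the row-deletion and column-deletion cases separately, padding with $1$'s at the front and with $0$'s (valuation $\infty$) at the end.
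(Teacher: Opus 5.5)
Your proposal is correct. It differs from the paper simply because the paper gives no argument: Theorem~\ref{dufsil} is quoted from Duffner--Silva, who prove it over elementary divisor duo rings, extending Thompson's interlacing theorem for PIDs.

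Your route is a complete, self-contained proof over $\mathbb{Z}$, and it works verbatim over any PID. You reduce to deleting one row or one column, identify the padded invariant factors with the cyclic decomposition of $\operatorname{coker}$, note that the two cokernels differ by a cyclic kernel, and compare the counting functions $c(t)=\dim_{\mathbb{F}_\ell}\ell^t(\cdot)/\ell^{t+1}(\cdot)$.

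The step you state tersely holds. We have $\ell^tN/\ell^{t+1}N\cong \ell^tM/\bigl(\ell^{t+1}M+(\ell^tM\cap C)\bigr)$, so the drop in dimension equals the dimension of the image of a cyclic group in an $\mathbb{F}_\ell$-vector space. That is at most $1$.

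The bookkeeping you flag as the main obstacle also works out:
\begin{itemize}
\item Column deletion: both cokernels are quotients of $\mathbb{Z}^m$. You apply the lemma directly with $M=\operatorname{coker}U$ and $N=\operatorname{coker}W$.
\item Row deletion: take $M=\operatorname{coker}W$ and $N=\operatorname{coker}U$. Regarding $N$ as a quotient of $\mathbb{Z}^m$ gives it the sequence $(1,u_1,\dots,u_{m-1})$. The lemma then yields $u_{i-1}\mid w_i$ and $w_i\mid u_i$, which is exactly $w_i\mid u_i\mid w_{i+1}$.
\end{itemize}
Without the front padding in the row case you would obtain the false relation $u_i\mid w_i$; for example, $W=\mathrm{diag}(2,3)$ with the second row deleted gives $u_1=2\nmid 1=w_1$.

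As for what each approach buys: the citation gives brevity and far greater generality, including noncommutative rings. Your argument makes the only case the paper actually uses, $R=\mathbb{Z}$, elementary and checkable. That case feeds into Theorem~\ref{freeness of picard group} and hence Theorem~\ref{mainthm}.
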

Let $G_1$ be a graph on the same vertex set as $G$. Consider the connected $k$-partition $\Pi=\{V_1,\dots,V_k\}$ of $V(G)$.
%Let $d_{1j},\dots,d_{nj}$ be the invariant factors (in Smith normal form) of $G_j$, and 
The set $\Pi$ is a partition of $V(G_1)$, but not necessarily a connected partition.
Let $b_{1,V_t}$ denote the sum of all columns in the Laplacian matrix of $G_1$ corresponding to the vertices in $V_t$. The set of vectors $\{b_{1,V_t}\mid 1\leq t \leq k\}$ generates the Laplacian lattice of the pseudo-pargraph $(G_1,\Pi)$.
%{\color{blue}
%\begin{definition}
 %   Let $G_1$ be a connected graph and $\Pi_1=\{V_1,\dots,V_m\}$ be a partition of $V(G_1)$. We call $(G_1,\Pi_1)$ a pseudo-pargraph if some of the induced subgraphs $G_1[V_i]$ are allowed to be disconnected.
%\end{definition}}
We say two pargraphs (pseudo-pargraphs) $(G,\Pi)$ and $(G_1,\Pi)$ have the same Laplacian lattice if $b_{1,V_t}=b_{V_t}$ for all $1\leq t \leq k$ where $\{b_{V_1},\dots,b_{V_k}\}$ is the generating set of $L_{\Pi}$ (Definition \ref{deflaplattice}).
\begin{theorem}\label{freeness of picard group}
Let $(G,\Pi)$ be a pargraph where $\Pi=\{V_1,\dots,V_k\} $ is a connected partition of $ V(G)$. Let $(G_1,\Pi), (G_2,\Pi),\dots,(G_m,\Pi)$ be a finite family of pseudo-pargraphs with $V(G_i)=V(G)$ for all $i$, and each $(G_i,\Pi)$ has the same Laplacian lattice as $(G,\Pi)$.
   If the total numbers of spanning trees in $G,G_1,\dots, G_m$ are relatively prime, then the Picard group of $(G,\Pi)$ is free.
\end{theorem}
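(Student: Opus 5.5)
The plan is to express the Picard group $\mathbb{Z}^n/L_{\Pi}$ through the Smith normal form of the $n\times k$ matrix $M$ whose columns are $b_{V_1},\dots,b_{V_k}$. Then I would bound its invariant factors by those of the full Laplacians $\Lambda_{G}$ and $\Lambda_{G_1},\dots,\Lambda_{G_m}$, using the interlacing result Theorem \ref{dufsil}.

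\emph{Step 1: rank of $M$.} Let $P$ be the $n\times k$ $0/1$ incidence matrix of the partition $\Pi$, so that $M=\Lambda_G P$. The hypothesis that each $(G_i,\Pi)$ has the same Laplacian lattice says exactly $M=\Lambda_{G_i}P$ for all $i$. The columns of $M$ sum to $\Lambda_G\mathbf{1}=0$, so the rank is at most $k-1$. Since $G$ is connected, $\ker\Lambda_G$ is spanned by $\mathbf{1}$, and $P$ is injective, so the kernel of $M$ is spanned by $(1,\dots,1)$. Hence $\operatorname{rank} M=k-1$. With invariant factors $u_1\mid\cdots\mid u_{k-1}$ and $u_k=0$, the Picard group is $\mathbb{Z}^{n-k+1}\oplus\bigoplus_{j<k}\mathbb{Z}/u_j$. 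Because $u_j\mid u_{k-1}$, it suffices to show $u_{k-1}=\pm1$.

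\emph{Step 2: $M$ as a submatrix of a matrix equivalent to $\Lambda_{G_i}$.} Fix a representative $r_t\in V_t$ for each $t$. Let $Q$ be the $n\times n$ integer matrix whose column indexed by $r_t$ is the indicator vector of $V_t$, and whose column indexed by each non-representative $j$ is $e_j$. Then $Q$ is unimodular: subtracting the $e_j$ columns from the indicator columns reduces $Q$ to the identity. So $\Lambda_{G_i}Q$ is equivalent to $\Lambda_{G_i}$ and has the same invariant factors $w^{(i)}_1\mid\cdots\mid w^{(i)}_{n}$. Moreover, $M$ is the $n\times k$ submatrix of $\Lambda_{G_i}Q$ formed by the columns indexed by $r_1,\dots,r_k$. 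The same holds with $G$ in place of $G_i$.

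\emph{Step 3: interlacing and the matrix-tree theorem.} Apply Theorem \ref{dufsil} with $W=\Lambda_{G_i}Q$ (so $m=n$ in that theorem's notation, with $n$ columns), $U=M$, $p=n$ and $q=k$. This gives $u_j\mid w^{(i)}_{j+n-k}$. Taking $j=k-1$ yields $u_{k-1}\mid w^{(i)}_{n-1}$. By Theorem \ref{minorsandinavrainats}, $w^{(i)}_1\cdots w^{(i)}_{n-1}$ generates the ideal of $(n-1)$-minors of $\Lambda_{G_i}$. By the matrix-tree theorem all cofactors of a Laplacian equal the number of spanning trees $\tau(G_i)$, so that ideal is $\langle\tau(G_i)\rangle$. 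If $G_i$ is disconnected then $\tau(G_i)=0$ and the divisibility below is trivially true. Hence $u_{k-1}\mid\tau(G_i)$ for every $i$, and likewise $u_{k-1}\mid\tau(G)$.

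Combining these, $u_{k-1}$ divides $\gcd(\tau(G),\tau(G_1),\dots,\tau(G_m))=1$, so all $u_j=1$ for $j\le k-1$ and the Picard group is free. I expect the main obstacle to be Step 2: one has to realise $M$ as an honest submatrix of a matrix with the same Smith form as each Laplacian, and keep the index shift in Theorem \ref{dufsil} straight so that it lands on $w_{n-1}$ rather than on $w_n=0$. The unimodular completion $Q$ is chosen precisely to make this work uniformly for all the $G_i$.
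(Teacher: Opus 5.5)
Your proposal is correct and follows essentially the same route as the paper. The paper likewise applies the column operations $b_{l,v_{i_t}}\to\sum_{v_j\in V_t}b_{l,v_j}$ (your unimodular $Q$) to realise $M$ as a column-submatrix of a matrix equivalent to each Laplacian. It then uses Theorem \ref{dufsil} to get $u_{k-1}\mid d_{n-1}$ for $G$ and every $G_l$, and concludes via the matrix-tree theorem. Your rank argument through $M=\Lambda_G P$ and your explicit handling of disconnected $G_i$ (where $\tau(G_i)=0$) are slightly more careful than the paper's version, which treats $k=1$ separately.
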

\begin{proof}
     Consider the pargraph $(G,\Pi)$ with the connected partition $\Pi=\{V_1,\dots\newline, V_k\}$. We consider the case where $2\leq k \leq n-1$. 
     Let $M$ be the $n\times k$ matrix with column vectors $b_{V_1},\dots,b_{V_k}$.
    In order to prove the result, it is enough to show that the invariant factors of the matrix $M$ lie in the set $\{0,1\}$.

    Let $v_{i_1},\dots,v_{i_k}$ be the smallest index vertices in $V_1,\dots,V_k$, respectively. %Let $\Lambda_{G_l}$ be the Laplacian matrix of $G_l$. 
    Let $M_{l}$ be the matrix obtained from the Laplacian matrix $\Lambda_{G_l}$ of $G_l$ by applying the elementary column operations $b_{l,v_{i_t}}\rightarrow \sum_{{v_j}\in V_{t}}b_{l,v_j} $ for all $1\leq t \leq k$, where $b_{l,v_j}$ is the column corresponding to the vertex $v_j$ in $\Lambda_{G_l}$. Let $\tilde{M}$ be the matrix obtained from applying the above elementary column operations on $\Lambda_G$.
The matrix $M$ is a submatrix of  $\tilde{M}$ and $M_l$ for each $l$, as $(G_l,\Pi)$ and $ (G,\Pi)$ have the same Laplacian lattice.
    
    %Since both the pargraph $(G_1,\Pi), (G,\Pi)$ have the same Laplacian lattices, the matrix $M$ is a submatrix of $M_1$. 
    Since the above elementary column operations are invertible, the invariant factors of the matrix $M_l$  will be the same as those of $\Lambda_{G_l}${\cite{NEWMAN1997367}}.
    Note that the $(n-1)$-th invariant factor $d_{n-1}(G)$ is nonzero, and for some $l$, the corresponding invariant factor $d_{n-1}(G_l)$ of $G_l$ is also nonzero (from the matrix tree theorem, Theorem \ref{minorsandinavrainats}). 
    Let $a_1,\dots,a_k$ be the invariant factors of $M$. From Theorem \ref{dufsil}, $a_{k-1}$ divides $d_{(k-1+2n-(n+k))}(G)=d_{n-1}(G)$ and $d_{n-1}(G_l)$ for each $l$. Since the total numbers of spanning trees of $G,G_1,\dots, G_m$ are relatively prime, the matrix-tree theorem and Theorem \ref{minorsandinavrainats} imply that ${\rm{gcd}}\{d_{n-1}(G), d_{n-1}(G_l)\mid 1\leq l \leq m\}=1$. This proves that $a_{k-1}=1$.
    From the properties of invariant factors, $a_i \mid a_{k-1}$ for all $1 \leq i \leq k-1$. Hence, $a_i=1$ for all $1\leq i\leq k-1 $. Furthermore, ${\rm rank}(\Lambda_G)=n-1$ ensures that
    the matrix $M$ has rank $k-1$, and this proves $a_k=0$ (Theorem \ref{minorsandinavrainats}). 
    
    In case of $k=1$, the matrix $M$ is the zero vector in $\mathbb{Z}^{n}$, and hence, the Picard group is free.
\end{proof}

We now provide an intrinsic sufficient condition for the freeness of the Picard group of a special class of pargraphs. We say a pargraph $(G,\Pi)$ is \emph{simple} if the corresponding graph $G$ is simple, i.e. $G$ has no multiple edges and loops.
\begin{corollary}\label{picGfreethm}
    Let $(G,\Pi)$ be a simple pargraph where $\Pi=\{V_1,\dots,V_k\}$ is a connected partition of $V(G)$. If the graph obtained after removing the basic edges of $(G,\Pi)$ from $G$ is a forest, then the Picard group of $(G,\Pi)$ is free. 
\end{corollary}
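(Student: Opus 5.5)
The plan is to apply Theorem \ref{freeness of picard group} with a single auxiliary pseudo-pargraph $(G_1,\Pi)$ for which $G_1$ is itself a spanning tree on $V(G)$. Then $G_1$ has exactly one spanning tree, so the numbers of spanning trees of $G$ and $G_1$ are automatically relatively prime. The work therefore splits into two steps: a lattice computation showing which $G_1$ are admissible, and a combinatorial construction of a suitable tree.

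\textbf{Step 1: the Laplacian lattice depends only on the relevant edges.} I will show that for any graph $G_1$ on $V(G)$, the vector $b_{1,V_t}$ depends only on the relevant edges of $G_1$ with respect to $\Pi$. Consider an edge $\{i,j\}$ with $i,j\in V_s$. It contributes $e_i-e_j$ to the row of vertex $i$ and $e_j-e_i$ to the row of vertex $j$. For $t=s$, both rows are summed in $b_{1,V_t}$, so the contributions cancel. For $t\neq s$, neither row appears in $b_{1,V_t}$. Hence $b_{1,V_t}$ is determined entirely by the edges joining distinct main vertices. In particular, let $F$ be the graph of relevant edges of $G$, i.e.\ $G$ with its basic edges removed. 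Then any $G_1$ of the form $F$ together with an arbitrary set of edges, each joining two vertices of a common $V_s$, satisfies $b_{1,V_t}=b_{V_t}$ for all $t$. So $(G_1,\Pi)$ has the same Laplacian lattice as $(G,\Pi)$. Note that such edges need not be edges of $G$, since pseudo-pargraphs only require $V(G_1)=V(G)$.

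\textbf{Step 2: building a spanning tree containing $F$.} Let $K$ be the graph on $V(G)$ whose edges are those of $F$ together with all pairs $\{i,j\}$ with $i,j$ in a common main vertex. Every basic edge of $G$ is an edge of $K$, and every relevant edge of $G$ lies in $F$, so $G\subseteq K$ and $K$ is connected. By hypothesis $F$ is a forest; simplicity of $G$ guarantees there are no parallel relevant edges that could spoil this. A forest contained in a connected graph extends to a spanning tree of that graph, so there is a spanning tree $T_1$ of $K$ with $F\subseteq T_1$. Every edge of $T_1\setminus F$ joins two vertices of the same main vertex. By Step 1, $G_1:=T_1$ gives a pseudo-pargraph $(G_1,\Pi)$ with the same Laplacian lattice as $(G,\Pi)$.

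\textbf{Step 3: conclusion.} $G_1$ is a tree, so it has exactly $1$ spanning tree, and $\gcd(\tau(G),1)=1$, where $\tau$ denotes the number of spanning trees. Theorem \ref{freeness of picard group} with $m=1$ then yields that the Picard group of $(G,\Pi)$ is free. The main point requiring care is Step 1, the cancellation of intra-part edges in $b_{V_t}$. It is also worth stressing that the relevant forest need not make the quotient graph on $\Pi$ a tree, as in the cycle example. This is why I complete $F$ using arbitrary intra-part edges from $K$ rather than edges of $G$. I expect no further obstacle.
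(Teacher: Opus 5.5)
Your proposal is correct and follows essentially the same route as the paper. The paper also adjoins intra-part edges (spanning trees $T_i$ of $G[V_i]$) to the relevant-edge forest, deletes basic edges from cycles to get a spanning tree with the same Laplacian lattice, and applies Theorem \ref{freeness of picard group} to $G$ together with this tree. Your explicit check that intra-part edges cancel in $b_{V_t}$ supplies a step the paper only asserts. Passing to all intra-part pairs in $K$ is harmless but unnecessary: $G$ is already connected and contains $F$, so a spanning tree of $G$ itself extending $F$ would do.
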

\begin{proof}
    Let $E_{B}$ be the set of basic edges of $(G,\Pi)$. Let $G'$ be the graph obtained after removing the basic edges (non-relevant) of $(G,\Pi)$ from $G$, i.e. $V(G')=V(G)$ and $E(G')=E(G)\setminus E_{B}$. Let $\tilde{b}_{v}$ be the column in $\Lambda_{G'}$ that corresponds to the vertex $v\in V(G')$, and let $\tilde{b}_{V_i}=\sum_{v\in V_i}\tilde{b}_{v}$. Note that $\tilde{b}_{V_i}=b_{V_i}$ for all $1\leq i \leq k$. If $G'$ is a tree, then the Picard group of $(G,\Pi)$ is free (Theorem \ref{freeness of picard group}).

    Suppose that $G'$ is a forest. Let $T_i$ be a spanning tree of the subgraph $G[V_i]$ induced on $V_i$ by $G$. Consider the graph $\tilde{T}$ where $V(\tilde{T})=V(G')$ and $E(\tilde{T})=(\cup_{i=1}^{k}E(T_i))\cup E(G')$. Note that $b_{V_i}(\tilde{T})=b_{V_i}$ for all $1\leq i \leq k$ where $b_{V_i}(\tilde{T})$ is the vector obtained after adding all the columns in $\Lambda_{\tilde{T}}$ corresponding to the vertices in $V_i$. If $\tilde{T}$ is a tree, then we have already shown that the Picard group of $(G,\Pi)$ will be free. In case if $\tilde{T}$ is not a tree, then it will contain finitely many cycles. Note that each cycle in $\tilde{T}$ contains an edge that is a basic edge in $(G,\Pi)$. This holds as $G'$ is a forest. By successively removing one basic edge from each cycle in $\tilde{T}$, we obtain a tree, say $T$. Note that $V(T)=V(G)$ and $b_{V_i}(T)=b_{V_i}$ for all $i$. This proves that the Picard group of $(G,\Pi)$ is free.
\end{proof}
The converse of the above corollary does not hold in general, i.e. the Picard group of a pargraph $(G,\Pi)$ can be free even when the graph obtained by removing its basic edges from $G$ contains a cycle.

\begin{example}\label{exampofpicard} {\rm
    Consider the following graph $G$ along with pargraphs $(G,\Pi_1)$ and $(G,\Pi_2)$ where $\Pi_{1}=\{\{1\},\{2,5\},\{3\},\{4\},\{6\},\{7\}\}$ and $ \Pi_2= \{\{1\},\{2,5,7\},\newline\{3\},\{4\},\{6\}\}$ are the connected partitions of the vertex set of $G$.

    \begin{figure}[htbp]
  \centering
  \resizebox{\textwidth}{!}{%
    \begin{tikzpicture}[scale=0.8, every node/.style={inner sep=1.5pt}]
      % =====================================================
      % === FIRST GRAPH (no ellipse) =========================
      % =====================================================
      \begin{scope}[xshift=0cm]
        \coordinate (c1) at (2.5,2);
        \coordinate (c2) at (5,2);
        \coordinate (c3) at (0,0);
        \coordinate (c4) at (2.5,0);
        \coordinate (c5) at (5,0);
        \coordinate (c6) at (2.5,-2);
        \coordinate (c7) at (5,-2);

        \foreach \a/\b in {1/2,2/5,3/1,3/4,3/6,4/5,6/7,7/5}{\draw[thick] (c\a)--(c\b);}

        \foreach \i/\pos in {1/above,2/above,3/above,4/above,5/right,6/above,7/below}{
          \node[circle,draw,fill,minimum size=6pt,label=\pos:{\small \i}] at (c\i) {};
        }
        \node at (2.5,-3.2) {\small $G$};
      \end{scope}

      % =====================================================
      % === SECOND GRAPH (ellipse around 2 and 5) ===========
      % =====================================================
      \begin{scope}[xshift=7cm]
        \coordinate (c1) at (2.5,2);
        \coordinate (c2) at (5,2);
        \coordinate (c3) at (0,0);
        \coordinate (c4) at (2.5,0);
        \coordinate (c5) at (5,0);
        \coordinate (c6) at (2.5,-2);
        \coordinate (c7) at (5,-2);

        \foreach \a/\b in {1/2,2/5,3/1,3/4,3/6,4/5,6/7,7/5}{\draw[thick] (c\a)--(c\b);}
        \foreach \i/\pos in {1/above,2/above,3/above,4/above,5/right,6/above,7/below}{
          \node[circle,draw,fill,minimum size=6pt,label=\pos:{\small \i}] at (c\i) {};
        }

        \draw[blue, thick, dashed] (5,1) ellipse (0.6cm and 1.5cm);
        \node at (2.5,-3.2) {\small $(G,\Pi_1)$};
      \end{scope}

      % =====================================================
      % === THIRD GRAPH (large ellipse around 5) ============
      % =====================================================
      \begin{scope}[xshift=14cm]
        \coordinate (c1) at (2.5,2);
        \coordinate (c2) at (5,2);
        \coordinate (c3) at (0,0);
        \coordinate (c4) at (2.5,0);
        \coordinate (c5) at (5,0);
        \coordinate (c6) at (2.5,-2);
        \coordinate (c7) at (5,-2);

        \foreach \a/\b in {1/2,2/5,3/1,3/4,3/6,4/5,6/7,7/5}{\draw[thick] (c\a)--(c\b);}
        \foreach \i/\pos in {1/above,2/above,3/above,4/above,5/right,6/above,7/below}{
          \node[circle,draw,fill,minimum size=6pt,label=\pos:{\small \i}] at (c\i) {};
        }

        \draw[blue, thick, dashed] (5,0) ellipse (0.6cm and 3cm);
        \node at (2.5,-3.2) {\small $(G,\Pi_2)$};
      \end{scope}
    \end{tikzpicture}%
  }
\end{figure}

The sets of basic edges of $(G,\Pi_1)$ and $(G,\Pi_2)$ are $\{\{2,5\}\}$ and $\{\{2,5\},\newline \{5,7\}\}$, respectively. In case of the pargraph $(G,\Pi_2)$, we obtain a tree if we remove the relevant edges of $(G,\Pi_2)$ from $G$. From Corollary \ref{picGfreethm}, the Picard group of $(G,\Pi_2)$ is free.

For the  pargraph $(G,\Pi_1)$, even though removing its relevant edges from $G$ does not yield a tree, the Picard group of $(G,\Pi_1)$ is free. This holds since the invariant factors of the matrix corresponding to the generating set $\{b_{\{2,5\}},b_{i}\mid i\in \{1,3,4,6,7\}\}$ of $L_{\Pi_1}$ are all $1$. We can also prove this using Theorem \ref{freeness of picard group}. Let $G_1=G-\{\{2,5\}\}$ be the graph obtained from $G$ by removing the edge $\{2,5\}$. Note that the pargraph $(G,\Pi_1)$ and the pseudo-pargraph $(G_1,\Pi_1)$ have the same Laplacian lattice, and the graphs $G$ and $G_1$ have $21$ and $5$ spanning trees, respectively. 
The freeness of the Picard group of $(G,\Pi_1)$ follows from Theorem \ref{freeness of picard group}.}\qed

%The sequences of invariants factors of $G$ and $G_1$ are $(1,1,1,1,1,21,0)$ and $(1,1,1,1,1,5,0)$, respectively. Since the gcd of the last nonzero invariants factors of $G$ and $G_1$ is $1$, and both pargraphs $(G,\Pi_1)$ and $(G_1,\Pi_1)$ have the same Laplacian lattice, the Picard group of $(G,\Pi_1)$ is free by Theorem \ref{freeness of picard group}.

%In case of the pargraph $(G,\Pi_1)$, the removal of the relevant edge of  $(G,\Pi_1)$ from $G$ doesn't provide a tree, but still the Picard group of $(G,\Pi_1)$ is free.

%The Picard groups of both pargraphs $(G,\Pi_1)$ and $(G,\Pi_2)$ are free because the invariant factors of the matrices corresponding to their Laplacian lattices are all $0$ or $1$.

%In case of the pargraph $(G,\Pi_2)$, $\{2,5\},\{5,7\}$ are the relevant edges of $(G,\Pi_2)$. If we remove these edges from the graph $G$, we obtain a tree. From Theorem \ref{picGfreethm}, the Picard group of $(G,\Pi_2)$ is free.}

\end{example}

We now establish a necessary condition for the freeness of the Picard group. For a subset $I\subseteq \{1,\dots,n\}$, we define the monomial ${\bf x}^{I}=\prod_{i\in I}x_i$.
Let $f(x_1,\dots,x_n)=\sum_{I\subseteq \{1,\dots,n\}}a_{I}{\bf x}^{I}\in \mathbb{Z}[x_1,\dots,x_n]$. Let $C_{f}$ be the content of $f$, i.e. the gcd of coefficients of $f$.
\begin{proposition}\label{conteProp}
    For any polynomial $f(x_1,\dots,x_n)=\sum_{I\subseteq \{1,\dots,n\}}a_{I}{\bf x}^{I}\in \mathbb{Z}[x_1,\dots,x_n]$, the content $C_f={\rm gcd}\{f(\sigma)\mid \sigma \in \{0,1\}^{n}\}$.
\end{proposition}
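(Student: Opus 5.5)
Write $g=\gcd\{f(\sigma)\mid \sigma\in\{0,1\}^n\}$. The plan is to prove the two divisibilities $C_f\mid g$ and $g\mid C_f$ separately, using that $f$ is multilinear, i.e.\ a $\mathbb{Z}$-combination of the square-free monomials ${\bf x}^{I}$.

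The direction $C_f\mid g$ is immediate. Each value $f(\sigma)$ for $\sigma\in\{0,1\}^n$ equals $\sum_{I}a_I{\bf x}^{I}(\sigma)$, which is an integer combination of the coefficients $a_I$. Hence $C_f$ divides every $f(\sigma)$, and therefore divides $g$.

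For $g\mid C_f$, identify $\sigma\in\{0,1\}^n$ with its support $J=\{i\mid\sigma_i=1\}$, and write $\chi_J$ for this $0/1$ vector. Then ${\bf x}^{I}(\chi_J)=1$ if $I\subseteq J$ and $0$ otherwise, so
\[
f(\chi_J)=\sum_{I\subseteq J}a_I .
\]
This is a zeta transform on the Boolean lattice. M\"obius inversion gives
\[
a_J=\sum_{I\subseteq J}(-1)^{|J\setminus I|}f(\chi_I),
\]
so every coefficient $a_J$ is an integer combination of the values $f(\chi_I)$. Consequently $g$ divides every $a_J$, hence $g\mid C_f$. Alternatively, one can avoid M\"obius inversion and argue by induction on $|J|$: $a_\emptyset=f(0)$, and $a_J=f(\chi_J)-\sum_{I\subsetneq J}a_I$.

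Combining the two divisibilities, and noting that both quantities are nonnegative gcds, gives $C_f=g$. The only point needing care is to justify the inversion formula, or equivalently to note that the transition matrix $(\mathbf{1}[I\subseteq J])$ is unitriangular over $\mathbb{Z}$ and so invertible over $\mathbb{Z}$. This step is routine. The hypothesis that $f$ is multilinear is essential: for $f=x^2-x$ the content is $1$, but $f$ vanishes on $\{0,1\}$. In the intended application the reduced Laplacian polynomial is multilinear, being a sum over spanning forests.
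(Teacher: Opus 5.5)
Your proof is correct and follows the paper's argument. The paper likewise observes that $f(\chi_T)=\sum_{J\subseteq T}a_J$, so $C_f$ divides every value, and then invokes M\"obius inversion on the Boolean lattice to write each $a_I$ as an integer combination of the values $f(\sigma)$. Your explicit formula $a_J=\sum_{I\subseteq J}(-1)^{|J\setminus I|}f(\chi_I)$, together with the unitriangularity and induction remarks, simply makes the cited inversion step self-contained.
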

\begin{proof} Let $d={\rm gcd}\{f(\sigma)\mid \sigma \in \{0,1\}^{n}\}$.
Note that any $\sigma \in \{0,1\}^{n}$ can be written as $\chi_{T}$ for some $T\subseteq \{1,\dots,n\}$, where $\chi_T$ is the characteristic vector of $T$. The value of $f$ at $\sigma$ is $f(\sigma)=f(\chi_T)=\sum_{J\subseteq T}a_{J}\sigma^{J}= \sum_{J\subseteq T}a_{J}$. This shows $C_f$ divides $f(\sigma)$ for all $\sigma \in \{0,1\}^{n}$. Hence, $C_f$ divides $d$. 

From \cite[Proposition 3.7.1]{stanley2011enumerative} (M\"obius inversion formula), each $a_{I}$ can be expressed as a finite integral combination of $\{f(\sigma)\mid \sigma \in \{0,1\}^{n}\}$. This implies $d$ divides $a_{I}$ for all $I\subseteq \{1,\dots,n\}$. Hence, $d$ divides $C_f$ and this proves the result.
\end{proof}

 For a pargraph $(G,\Pi)$, let $P({\bf x})$ be its reduced Laplacian polynomial, and let $C_P$ be the content of $P({{\bf x}})$.

\begin{lemma}\label{freenessimplecontent}
    If the Picard group of a pargraph $(G,\Pi)$ is free, then $C_P=1$.
\end{lemma}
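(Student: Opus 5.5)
The plan is to argue by contradiction, one prime at a time. Suppose a prime $p$ divides $C_P$. By Proposition \ref{conteProp}, $p$ then divides $P(\sigma)$ for every $\sigma\in\{0,1\}$-assignment of the variables $x_e$. We first check that $P$ is multilinear in the $x_e$, so that Proposition \ref{conteProp} applies. For this we use the weighted matrix-tree theorem: $P(\mathbf{x})$ is the sum, over spanning trees of the multigraph formed by the relevant edges together with all pairs $\{i,j\}$ inside a common main vertex, of the product of the edge weights. No edge appears twice in a tree, so $P$ is multilinear. Hence $P\equiv 0$ in $\mathbb{F}_p[\mathbf{x}]$, and it suffices to produce a single coefficient of $P$ that is not divisible by $p$. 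The input from freeness is this: the matrix $M$ with columns $b_{V_1},\dots,b_{V_k}$ has invariant factors $1,\dots,1,0$ (the rank is $k-1$, as in the proof of Theorem \ref{freeness of picard group}). By Theorem \ref{minorsandinavrainats}, the $(k-1)\times(k-1)$ minors of $M$ then have gcd $1$, so some such minor is not divisible by $p$.

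To connect $P$ with the minors of $M$, I would perform on $L(\mathbf{x})$ the same column operations as in the proof of Theorem \ref{freeness of picard group}. For each $t$, replace the column of a chosen representative $r_t\in V_t$ by the sum of the columns over $V_t$. Since $L_B$ annihilates block-constant vectors, $L(\mathbf{x})\chi_{V_t}=L_R\chi_{V_t}=b_{V_t}$. So after these operations the representative columns are exactly the columns of $M$ and contain no variables. We take $r_k$ to be the last vertex, so that deleting the last row and column of $L(\mathbf{x})$ removes the column of $V_k$. The determinant is unchanged, so $P=\det$ of the resulting reduced matrix. Then expand by the generalized Laplace expansion along the $k-1$ constant columns $b_{V_1},\dots,b_{V_{k-1}}$:
\[
P(\mathbf{x})=\sum_{R}\pm\, m_R(M)\cdot N_{R}(\mathbf{x}).
\]
Here $R$ runs over $(k-1)$-subsets of rows, $m_R(M)$ is the corresponding minor of $M$, and $N_R$ is the complementary minor in the non-representative columns.

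Next, isolate the homogeneous component of top degree $n-k$ in $\mathbf{x}$. In $N_R$ this component comes only from $L_B$, which is block diagonal. Hence it is nonzero only when $R^c$ meets each block in the correct number of rows. In that case it factors as a product, over the parts $V_t$, of all-minors spanning-forest polynomials of the weighted complete graph on $V_t$ (with the row set $R\cap V_t$ and the column $r_t$ deleted). These polynomials have coefficients $\pm1$. Choosing a monomial that occurs in exactly one of these products (for instance, one built from stars centred at the deleted rows), the corresponding coefficient of $P$ equals $\pm m_R(M)$.

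The main obstacle is the last step, which has two parts. First, the surviving $R$ are only the row sets meeting each part in a prescribed way, roughly one row from each of $V_1,\dots,V_{k-1}$. One must show that freeness already forces the gcd of these restricted minors to be $1$, not merely the gcd of all $(k-1)$-minors of $M$. Second, the monomial separation between different $R$ must be checked. For the first part I would try to show that any $(k-1)$-minor of $M$ using two rows from one part is an integer combination of admissible minors. The tool would be the relation $\sum_{i\in V_t}(\text{row } i \text{ of } M)=$ row $t$ of the quotient Laplacian, combined with multilinearity of minors in rows. If this fails, the fallback is to work with lower-degree components of $P$ as well, where forests inside the parts have more components and correspondingly more rows per part can be selected.
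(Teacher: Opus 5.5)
Your reduction is set up correctly. The column operations do turn the representative columns into $b_{V_1},\dots,b_{V_{k-1}}$. The Laplace expansion is right. The surviving row sets in top degree are exactly those with one row $s_t$ from each $V_t$, $t<k$, and none from $V_k$.

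\textbf{The gap.} The separation step you flag as the main obstacle is not merely unfinished; it is false, and the top-degree strategy cannot work.
\begin{itemize}
\item For such $R$, the top-degree part of $N_R$ factors over the blocks. The factor from $V_t$ is the minor of the weighted Laplacian of the complete graph on $V_t$ with row $s_t$ and column $r_t$ deleted.
\item All first cofactors of a Laplacian coincide. So this factor is $\pm T_t(\mathbf{x})$, the full spanning-tree polynomial of that complete graph, whatever $s_t$ is. A star centred at $s_t$ is just a spanning tree and occurs for every choice of $s_t$.
\item Hence the degree-$(n-k)$ part of $P$ is $c\prod_t T_t(\mathbf{x})$ for a single integer $c=\sum_R \pm m_R(M)$. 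Counting spanning trees of the weighted graph gives $c=\tau(G/\Pi)$, the number of spanning trees of the multigraph obtained by contracting each main vertex.
\item This number can be divisible by $p$ while the Picard group is free. For $(G,\Pi_1)$ in Example~\ref{mainthmexam}, $P=5+16x_{25}$ and $\tau(G/\Pi_1)=16$, so for $p=2$ no top-degree coefficient is a witness.
\end{itemize}
A gcd statement about the individual admissible minors would not help, since only their signed sum ever appears as a coefficient. Any witness must come from lower-degree components, where entries of $L_R$ and $L_B$ mix inside $N_R$. Your fallback there is not carried out, and that is where the whole content of the lemma lies. So the proposal does not prove the statement.

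\textbf{The missing idea.} The paper avoids coefficient bookkeeping entirely.
\begin{itemize}
\item The first cofactors of $L(\mathbf{x})$ all agree up to sign. So $P\equiv 0 \pmod p$ forces $\mathrm{rank}\,L(\mathbf{x})\le n-2$ over $\mathbb{Z}_p(\mathbf{x})$. This gives a kernel vector $\mathbf{v}$ not proportional to $(1,\dots,1)$.
\item Differentiate $L(\mathbf{x})\mathbf{v}=\mathbf{0}$ with respect to a single variable $x_e$, $e=\{i,j\}$, and multiply on the left by $\mathbf{v}^t$. By symmetry this kills $L(\mathbf{x})\,\partial\mathbf{v}/\partial x_e$ and leaves $(\mathbf{v}(i)-\mathbf{v}(j))^2=0$.
\item So $\mathbf{v}=U\mathbf{w}$ is constant on main vertices, and $M\mathbf{w}=L(\mathbf{x})\mathbf{v}=\mathbf{0}$. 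Since also $M(1,\dots,1)^t=\mathbf{0}$, we get $\mathrm{rank}_{\mathbb{Z}_p}M\le k-2$.
\item Thus $p$ divides every $(k-1)\times(k-1)$ minor of $M$, contradicting freeness.
\end{itemize}
Working with each $x_e$ separately lets the argument use the full polynomial identity, not a single graded piece.
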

\begin{proof} Consider the pargraph $(G,\Pi)$ with the connected $k$-partition $\Pi=\{V_1,\dots,V_k\}$.
Let $M$ be the $n\times k$ matrix with column vectors $b_{V_1},\dots,b_{V_k}$. 
Since $\mathbb{Z}^{n}/L_{\Pi}$ is free, the gcd of $(k-1)\times (k-1)$ minors of $M$ is $1$ (Theorem \ref{minorsandinavrainats}). 

  Let $C_P>1$ and $p$ be a prime factor of $C_P$. The polynomial $P({\bf x})\equiv 0$ in $\mathbb{Z}_{p}[{\bf x}]$. Since $L({\bf x})$ is a weighted Laplacian matrix and $P({\bf x})\equiv 0$ in $\mathbb{Z}_{p}[{\bf x}]$, any $(n-1)\times (n-1)$ minor of $L({\bf x})$ is $0$ in $\mathbb{Z}_{p}[{\bf x}]$.
  This proves  ${\rm rank}(L({\bf{x}}))\leq n-2$ over the quotient field $\mathbb{Z}_{p}({\bf x})$. Consequently, the null space of $L(\mathbf{x})$ contains a non-zero vector $\mathbf{v} \in (\mathbb{Z}_{p}({\bf x}))^{n}$ that is not a scalar multiple of $(1, 1, \dots, 1)$. 

As ${\bf v}$ lies in the null space of $L(\mathbf{x})$, $L(\mathbf{x})\cdot {\bf v}=(L_R+L_B)\cdot {\bf v}={\bf 0}_n$ where ${\bf 0}_n$ is the column vector $(0,0,\dots,0)^t$. Let $i,j$ be two basic vertices that lie in the same main vertex, say $V_i$, and let $e=\{i,j\}$.
Taking the partial derivative of $(L_R+L_B)\cdot {\bf v}={\bf 0}_n$ with respect to $x_e(=x_{ij})$ yields
\begin{equation}\label{eq4}
    L({\bf x})\cdot \frac{\partial {\bf v}}{\partial x_e}+\frac{\partial L_B}{\partial x_e}\cdot {\bf v}={\bf 0}_n.
\end{equation}
Multiplying Equation \ref{eq4} from the left by ${\bf v}^t$, the transpose of ${\bf v}$, we obtain
\begin{equation}\label{eq5}
    {\bf v}^t \cdot L(X)\cdot \frac{\partial {\bf v}}{\partial x_e}+{\bf v}^t \cdot \frac{\partial L_B}{\partial x_e}\cdot {\bf v}= 0.
\end{equation}
Since $L({\bf x})$ is a symmetric matrix, ${\bf v}^t \cdot L({\bf x})=(L({\bf x})\cdot {\bf v})^{t}={\bf 0}$. Equation \ref{eq5} reduces to ${\bf v}^t \cdot \frac{\partial L_B}{\partial x_e}\cdot {\bf v}=0$, which yields $({\bf v}(i)-{\bf v}(j))^2=0$. This proves ${\bf v}(i)={\bf v}(j)$ for all $i,j\in V_i$, and hence, the entries of ${\bf v}$ are the same within each main vertex.

Let $U$ be the $n\times k$ matrix with columns $\chi_{V_1},\dots,\chi_{V_k}$ where $\chi_{V_i}$ is the characteristic vector of $V_i$. As ${\bf v}$ takes the same value on all basic vertices in a main vertex, and it is not a scalar multiple of $(1,1,\dots,1)$, there exists a non-zero column vector ${\bf w}\in (\mathbb{Z}_p({\bf x}))^k$ such that ${\bf v}=U\cdot {\bf w}$. Note that ${\bf w}$ is not a scalar multiple of $(1,1,\dots,1)$ and 
$M=L({\bf x)} \cdot U$. The vector $M\cdot {\bf w}=(L({\bf x})\cdot U)\cdot {\bf w}=L({\bf x})\cdot {\bf v}={\bf 0}_n$. This proves that ${\rm rank}(M)\leq k-2$ over the field $\mathbb{Z}_{p}({\bf x})$ and hence, over the field $\mathbb{Z}_{p}$. This implies $p$ divides all $(k-1)\times (k-1)$ minors of $M$. %Since the gcd of all $(k-1)\times (k-1)$ minors of $M$ is $1$, the prime $p$ divides $1$. 
This gives us a contradiction as gcd of all $(k-1)\times (k-1)$ minors of $M$ is $1$. Hence, $C_P$ does not have any prime factor and this proves the result.
\end{proof}
\subsection*{Proof of Theorem \ref{mainthm}}
\begin{proof} From Lemma \ref{freenessimplecontent}, $C_P=1$ if the Picard group of $(G,\Pi)$ is free. We now prove the converse part. Let $\tau(H)$ denotes the total number of spanning trees of a graph $H$. For a binary assignment ${\bf u}$ of the variables $x_{ij}$ in $\{0,1\}$, let $G({\bf u})$ be the graph with vertex set $V(G)$ and the Laplacian matrix $L{\bf(u)}$, where $L{\bf(u)}$ is the evaluation of the weighted Laplacian matrix at ${\bf x= u}$.
From the weighted matrix tree theorem \cite{klee2019linear} and Proposition \ref{conteProp}, we have
\begin{equation}
\begin{aligned}
    C_P &= \gcd\left\{ (L(\mathbf{u}))_{\mathrm{red}} \;\middle|\; \mathbf{u} \text{ is a binary assignment of the variables } x_{ij} \text{ in } \{0,1\} \right\} \\
        &= \gcd\left\{ \tau(G(\mathbf{u})) \;\middle|\; \mathbf{u} \text{ is a binary assignment of the variables } x_{ij} \text{ in } \{0,1\} \right\}.
\end{aligned}
\end{equation}
As $C_P=1$ and $\mathbb{Z}$ is a principal ideal domain, there exists a finite sequence of assignments ${\bf u_1,\dots,u_s}$ such that ${\rm gcd}\{\tau(G(\bf u_1)),\dots,\tau(G(\bf u_s))\}=1$. From Theorem \ref{freeness of picard group}, the Picard group $\mathbb{Z}^{n}/L_{\Pi}$ is free.
\end{proof}
\begin{example}\label{mainthmexam}{\rm Consider the pargraphs $(G,\Pi_1),(G,\Pi_2)$ defined in Example \ref{exampofpicard}. For $(G,\Pi_1)$, the reduced Laplacian polynomial ${\rm det}((L{(\bf{x}}))_{\rm red})=5+16 x_{25} \in \mathbb{Z}[x_{25}]$ and hence, the content of ${\rm det}((L{(\bf{x}}))_{\rm red})$ is ${\rm gcd}\{5,16\}=1$. From Theorem \ref{mainthm},
  the Picard group of $(G,\Pi_1)$ is free.

  For $(G,\Pi_2)$, ${\rm det}((L{(\bf{x}}))_{\rm red})= 12 x_{25}x_{27} + 12 x_{25}x_{57} + 12 x_{27}x_{57} + 4 x_{25} + 4 x_{27} + 4 x_{57} + 1
  \in \mathbb{Z}[x_{25},x_{27},x_{57}]$. Since the content of the reduced Laplacian polynomial of $(G,\Pi_2)$ is $1$, the Picard group of is free.\qed}
\end{example}
\begin{corollary}\label{corrpargrah} The Picard group of a pargraph $(G,\Pi)$ is free if and only if there exists a finite family of pseudo-pargraphs $(G_1,\Pi), (G_2,\Pi),\dots,(G_m,\Pi)$ such that $V(G_i)=V(G)$ for all $i$, each $(G_i,\Pi)$ has the same Laplacian lattice as $(G,\Pi)$ and the number of spanning trees in $G,G_1,\dots, G_m$ are relatively prime.
\end{corollary}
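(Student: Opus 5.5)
The plan is to prove the two directions separately. Sufficiency is Theorem \ref{freeness of picard group} verbatim: given such a family $(G_1,\Pi),\dots,(G_m,\Pi)$ with relatively prime spanning-tree counts, that theorem gives that $\mathbb{Z}^n/L_{\Pi}$ is free. So the real content is necessity.

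For necessity, assume the Picard group of $(G,\Pi)$ is free. By Lemma \ref{freenessimplecontent}, the reduced Laplacian polynomial $P(\mathbf{x})$ has content $C_P=1$. I will then reuse the computation in the proof of Theorem \ref{mainthm}. By Proposition \ref{conteProp} together with the weighted matrix tree theorem,
\[
C_P=\gcd\{\tau(G(\mathbf{u}))\mid \mathbf{u}\in\{0,1\}^{N}\},
\]
where $N$ is the number of variables $x_e$, and $G(\mathbf{u})$ is the graph on $V(G)$ whose Laplacian is $L(\mathbf{u})$. Since there are only finitely many binary assignments, list them as $\mathbf{u}_1,\dots,\mathbf{u}_s$ and set $G_i=G(\mathbf{u}_i)$, taking $m=s$. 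The gcd of $\tau(G_1),\dots,\tau(G_m)$ is $1$, so adding $\tau(G)$ keeps the numbers $\tau(G),\tau(G_1),\dots,\tau(G_m)$ relatively prime.

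It remains to check that each $(G_i,\Pi)$ is a legitimate pseudo-pargraph with the same Laplacian lattice as $(G,\Pi)$. This is the one point that needs care, because the paper only remarks it informally. The argument has three steps.

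\begin{enumerate}
\item $G(\mathbf{u})$ is a genuine multigraph. Its Laplacian $L(\mathbf{u})$ has nonnegative integer off-diagonal weights, and each row sums to zero. So it is the Laplacian of the multigraph whose relevant edges are exactly those of $G$ and whose basic edges are the pairs $e$ inside a main vertex with $u_e=1$. Disconnected graphs are allowed, since pseudo-pargraphs need not be connected.
\item The lattices agree. Write $L(\mathbf{u})=L_R+L_B(\mathbf{u})$. For each main vertex $V_t$, the sum of the columns of $L_B(\mathbf{u})$ indexed by $V_t$ is zero: every entry of such a column vanishes outside $V_t$, and for each row in $V_t$ the diagonal term $\sum_{e\in E(i)}u_e$ cancels against the off-diagonal terms $-u_e$. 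Hence the column sum of $\Lambda_{G_i}$ over $V_t$ equals the column sum of $L_R$ over $V_t$.
\item The same cancellation applies to $G$ itself, which is $G(\mathbf{u})$ for the assignment $u_e=1$ on actual basic edges and $0$ otherwise. Therefore $b_{i,V_t}=b_{V_t}$ for all $t$.
\end{enumerate}

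Since $\Lambda_G$ is symmetric, rows and columns are interchangeable in Definition \ref{deflaplattice}, so this is exactly the "same Laplacian lattice" condition. With all three points verified, the family $(G_1,\Pi),\dots,(G_m,\Pi)$ satisfies the corollary.
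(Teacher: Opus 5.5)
Your proof is correct and follows the paper's route. Sufficiency is Theorem \ref{freeness of picard group}, and necessity combines Lemma \ref{freenessimplecontent} with the identity $C_P=\gcd\{\tau(G(\mathbf{u}))\}$ from the proof of Theorem \ref{mainthm}; your explicit check of the same-Laplacian-lattice condition fills in what the paper only asserts informally. One small slip: if $G$ has parallel basic edges, $\Lambda_G$ equals $L(\mathbf{u})$ with $u_e$ the number of basic edges joining the endpoints of $e$, not a $0/1$ assignment. Your column-sum cancellation works for any integer weights, so step 3 still goes through.
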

\begin{proof}
    The proof follows from Theorem \ref{freeness of picard group} and the proof of the converse of Theorem \ref{mainthm}.
\end{proof}

Recall that the toppling ideal of a pargraph $(G,\Pi)$ is the lattice ideal associated with the Laplacian lattice $L_{\Pi}$ of the pargraph. Note that the Laplacian lattice $L_{\Pi}$ is a sublattice of the root lattice $A_{n-1}=\{(a_1,\dots,a_n)\in \mathbb{Z}^{n} \mid a_1+\dots +a_n=0\}$. As a consequence of this, the toppling ideal $I_{\Pi}$ is a homogeneous ideal in the polynomial ring $R_n$. 

\begin{corollary}\label{corrtoptoricideal}
The toppling ideal $I_{\Pi}$ of a pargraph $(G,\Pi)$ is toric if and only if the content of the reduced Laplacian polynomial of $(G,\Pi)$ is $1$.
\end{corollary}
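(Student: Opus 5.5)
The plan is to combine Theorem \ref{mainthm} with the standard characterisation of prime lattice ideals. By Theorem \ref{mainthm}, the content of the reduced Laplacian polynomial of $(G,\Pi)$ is $1$ if and only if the Picard group $\mathbb{Z}^n/L_{\Pi}$ is free. It therefore suffices to show that the toppling ideal $I_{\Pi}$ is toric if and only if $\mathbb{Z}^n/L_{\Pi}$ is torsion-free. That is, $L_{\Pi}$ should be a saturated lattice: $L_{\Pi}=(L_{\Pi}\otimes\mathbb{Q})\cap\mathbb{Z}^n$.

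For the direction ``free $\Rightarrow$ toric'', I would invoke the classical fact that a lattice ideal $I_L$ is prime whenever $L$ is saturated (see \cite{miller2005combinatorial}, Chapter 7, or \cite{herzog2018binomial}). The argument runs as follows. If $\mathbb{Z}^n/L$ is free, choose an isomorphism $\mathbb{Z}^n/L\cong\mathbb{Z}^r$ and let $a_1,\dots,a_n\in\mathbb{Z}^r$ be the images of the standard basis vectors. Then $I_L$ is the kernel of the monomial map $x_i\mapsto \mathbf{t}^{a_i}$ into a Laurent polynomial ring over $\mathbb{K}$. This map is a domain homomorphism, so $I_L$ is prime. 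By definition $I_{\Pi}$ is generated by pure-difference binomials $\mathbf{x}^{\mathbf u}-\mathbf{x}^{\mathbf v}$, and it is homogeneous since $L_{\Pi}\subseteq A_{n-1}$. Hence $I_{\Pi}$ is toric.

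For the converse ``toric $\Rightarrow$ free'', suppose $\mathbb{Z}^n/L_{\Pi}$ has torsion. Then there exist $\mathbf w\notin L_{\Pi}$ and an integer $m\ge 2$ with $m\mathbf w\in L_{\Pi}$. Write $\mathbf w=\mathbf u-\mathbf v$ with $\mathbf u,\mathbf v\in\mathbb{N}^n$ of disjoint support. Then $\mathbf x^{m\mathbf u}-\mathbf x^{m\mathbf v}\in I_{\Pi}$, and this binomial factors as $(\mathbf x^{\mathbf u}-\mathbf x^{\mathbf v})\cdot\sum_{j=0}^{m-1}\mathbf x^{j\mathbf u}\mathbf x^{(m-1-j)\mathbf v}$.

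We must show that neither factor lies in $I_{\Pi}$. The first factor does not, because a pure-difference binomial $\mathbf x^{\mathbf u}-\mathbf x^{\mathbf v}$ lies in $I_L$ only if $\mathbf u-\mathbf v\in L$. This holds because $I_L$ is graded by $\mathbb{Z}^n/L$, and monomials of distinct degrees are linearly independent modulo $I_L$. For the second factor, its monomials have pairwise distinct $\mathbb{Z}^n/L$-degrees $j\mathbf w$ for $0\le j<m$, provided $m$ is the exact order of $\mathbf w$. By the grading, a polynomial whose monomials lie in distinct degree classes lies in $I_L$ only if it is zero, so the second factor is not in $I_{\Pi}$ either. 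Thus $I_{\Pi}$ is not prime, and hence not toric.

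The main obstacle is this converse direction, specifically the choice of $m$ as the exact order of $\mathbf w$ so that the $\mathbb{Z}^n/L_{\Pi}$-grading argument applies. The argument works over any field, including positive characteristic, since it uses only linear independence of monomials in distinct degree classes. Combining both directions with Theorem \ref{mainthm} proves the corollary.
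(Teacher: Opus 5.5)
Your proposal is correct and takes essentially the paper's route: the paper also combines Theorem \ref{mainthm} with the standard fact that a lattice ideal is prime exactly when the lattice is saturated. The paper simply cites that fact as \cite[Theorem 7.4]{miller2005combinatorial}, while you prove it directly via the $\mathbb{Z}^n/L_{\Pi}$-grading; one cosmetic slip is that the monomials of your second factor have degrees $(m-1)\mathbf v+j\mathbf w$ rather than $j\mathbf w$, which does not affect their pairwise distinctness.
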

\begin{proof}
    The proof follows from Theorem \ref{mainthm}, and \cite[Theorem 7.4]{miller2005combinatorial}.
\end{proof}

Note that if $G=C_n$ is the cycle graph on $n$-vertices and $\Pi$ is connected $k$-partition with $
|V_i|\geq 2$ for some $V_i\in \Pi$, then the toppling ideal of $(C_n,\Pi)$ is a prime ideal (Corollary \ref{picGfreethm}). This provides another proof of the toric structure of the toppling ideal of  $(C_n,\Pi)$ \cite[Corollary 3.9]{karki2024rationalnormalcurveschip}.

\subsection{Gr\"obner Basis of the Toppling Ideal}\label{secgboftop}
 %The toppling ideal of the pargraph $(G,\Pi)$ is the lattice ideal associated with the Laplacian lattice $L_{\Pi}$ the pargraph. Note that the Laplacian lattice $L_{\Pi}$ is a sublattice of the root lattice $A_{n-1}=\{(a_1,\dots,a_n)\in \mathbb{Z}^{n} \mid a_1+\dots +a_n=0\}$. As a consequence of this, the toppling ideal $I_{\Pi}$ is a homogeneous ideal in the polynomial ring $R_n$. 
 
 In this subsection, we first construct a generating set, and then a Gr\"obner basis for the toppling ideal in terms of certain subsets of $V(G)$.
For a parset $S$ of $V(G)$, we define the monomial ${\bf x}^{S\rightarrow \overline{S}}=\prod_{i\in S}x_i^{{\rm outdeg}_{S}(i)}$. %where $d_i$ is the number of edges between the vertex $i$ and the vertices in $\overline{S}=V(G)\setminus S$. 
We use the notation $S\rightarrow \overline{S}$ to denote the exponent vector of the monomial ${\bf x}^{S\rightarrow \overline{S}}$.
%Let $S$ be a parset of $V(G)$ not containing $V_k$. We define the vector $S\rightarrow \bar{S}=$
\begin{lemma}\label{generating lemma} Let $(G,\Pi)$ be a pargraph where $\Pi=\{V_1,\dots,V_k\}$ is a connected partition of $V(G)$. The toppling ideal $I_{\Pi}$ is generated by the set $\{ {\bf x}^{S\rightarrow \bar{S}}-{\bf x}^{ \bar{S}\rightarrow S} \mid S \text{ is a parset of }V(G)\text{ not containing }V_k
\}.
$ 
\end{lemma}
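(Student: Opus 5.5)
Let $J$ denote the ideal generated by the binomials ${\bf x}^{S\rightarrow \bar{S}}-{\bf x}^{\bar{S}\rightarrow S}$ with $S$ a parset not containing $V_k$. The plan is to show $J\subseteq I_{\Pi}$ directly and then $I_{\Pi}\subseteq J$ via reduced divisors, following the strategy of Perkinson--Perlman--Wilmes for graphs.

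\textbf{Containment $J\subseteq I_{\Pi}$.} For a parset $S=\cup_{i\in U}V_i$, the vector $\sum_{i\in U}b_{V_i}$ is the firing vector of $S$. Its $j$-th entry equals ${\rm outdeg}_S(j)$ for $j\in S$ and $-|\{\text{edges from } j \text{ to } S\}|$ for $j\notin S$, because edges internal to $S$ cancel. Hence $(S\rightarrow\bar S)-(\bar S\rightarrow S)\in L_{\Pi}$, so each generator lies in $I_{\Pi}$.

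\textbf{Containment $I_{\Pi}\subseteq J$.} Since $I_{\Pi}$ is a lattice ideal, it is spanned as a $\mathbb{K}$-vector space by binomials ${\bf x}^{D}-{\bf x}^{D'}$ with $D,D'$ effective and $D-D'\in L_{\Pi}$. It therefore suffices to show that ${\bf x}^{D}\equiv {\bf x}^{D_0}\pmod J$, where $D_0$ is the unique $V_k$-reduced divisor equivalent to $D$ (Lemma \ref{effdiv}). Applying this to both $D$ and $D'$, which share the same $D_0$, gives ${\bf x}^D-{\bf x}^{D'}\in J$.

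\textbf{Reduction step.} Follow the algorithm in the proof of Lemma \ref{effdiv}. If $D$ is effective and not $V_k$-reduced, there is a parset $S\not\supseteq V_k$ that is legal to fire, meaning $D\ge S\rightarrow\bar S$ componentwise. Then ${\bf x}^{D}={\bf x}^{D-(S\rightarrow \bar S)}\,{\bf x}^{S\rightarrow\bar S}$. Modulo $J$ this is congruent to ${\bf x}^{D-(S\rightarrow \bar S)+(\bar S\rightarrow S)}$, which is exactly ${\bf x}^{D''}$ for $D''$ the divisor obtained by firing $S$; $D''$ is again effective. Lemma \ref{effdiv} shows that this process terminates (via the order $<'$) at the $V_k$-reduced divisor $D_0$, and each step preserves the class modulo $J$. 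Hence ${\bf x}^{D}\equiv{\bf x}^{D_0}\pmod J$.

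\textbf{Main obstacle.} The only delicate point is making precise that $I_{\Pi}$ is spanned by pure-difference binomials with both exponents effective and differing by a lattice vector. This is standard for lattice ideals: write ${\bf u}-{\bf v}={\bf u}^+-{\bf u}^-$ and note that a multiple of a generator lies in $J$. Termination and uniqueness of the endpoint are already supplied by Lemma \ref{effdiv}, so no further work is needed there.
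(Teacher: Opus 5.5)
Your proposal is correct and follows essentially the same route as the paper. For the first inclusion, you verify directly that $(S\rightarrow\bar S)-(\bar S\rightarrow S)$ is the firing vector of $S$ and so lies in $L_{\Pi}$. For the reverse inclusion, you reduce both ${\bf x}^{\bf u}$ and ${\bf x}^{\bf v}$ modulo the binomials, by legal firings from parsets not containing $V_k$, until you reach the common $V_k$-reduced divisor given by Lemma \ref{effdiv}; the paper does exactly this, phrasing each step as a division with remainder rather than a congruence.

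Your ``main obstacle'' is lighter than you suggest. The paper defines $I_{\Pi}$ as generated by the binomials ${\bf x}^{\bf u}-{\bf x}^{\bf v}$ with ${\bf u},{\bf v}\in\mathbb{N}^n$ and ${\bf u}-{\bf v}\in L_{\Pi}$, so you can treat these generators directly without a spanning argument.
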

\begin{proof}
    Let $S$ be a parset of $V(G)$ not containing $V_k$. Note that $S\rightarrow  \bar{S}, \bar{S} \rightarrow  S \in  \mathbb{N}^{n}$ and $ (S\rightarrow  \bar{S}) - (\bar{S} \rightarrow  S) = \sum_{i\in S} b_i \in L_{\Pi}$. Hence, ${\bf x}^{ S\rightarrow  \bar{S}}-{\bf x}^{ \bar{S} \rightarrow  S}\in I_{\Pi}$ and  $\langle {\bf x}^{ S\rightarrow  \bar{S}}-{\bf x}^{ \bar{S} \rightarrow  S} \mid S \text{ is a parset of $V(G)$ not containing } V_k \rangle \subseteq I_{\Pi}$.

    Let ${\bf x^{u}- x^{v}}\in I_{\Pi}$ where ${\bf u},{\bf v}\in \mathbb{N}^{n}$ and ${\bf u}-{\bf v}\in L_{\Pi}$.   Let $S$ be a parset of $V(G)$ such that ${\bf x^u}$ is divisible by ${\bf x}^{ S\rightarrow  \bar{S}}$. The monomial ${\bf x^{u}}={\bf x^{m}}({\bf x}^{ S\rightarrow  \bar{S}}-{\bf x}^{ \bar{S} \rightarrow  S})+{\bf x^r}$ where ${\bf x^r}={\bf x}^{{\bf u}- ((S\rightarrow  \bar{S}) - (\bar{S} \rightarrow  S))}$ is the remainder obtained after dividing ${\bf x^{u}}$ by ${\bf x}^{ S\rightarrow  \bar{S}}-{\bf x}^{ \bar{S} \rightarrow  S}$. This implies that for the divisor ${\bf u}$ on the pargraph $(G,\Pi)$, the parset $S$ is legal to fire, and as a consequence, the new divisor we obtain is ${\bf u}-((S \rightarrow \bar{S})-(\bar{S}\rightarrow S))$. From Lemma \ref{effdiv}, there exists a finite sequence of legal chip-firing moves from the parsets not containing $V_k$ that reduces the divisor ${\bf u}$ to a $V_k$-reduced divisor on $(G,\Pi)$. %Hence,
%${\bf x^{u}}=\sum_{j=1}^{l} {\bf x^{m_{1j}}} ({\bf x}^{ S_{1j} \rightarrow  \bar{S}_{1j}}-{\bf x}^{\bar{S}_{1j} \rightarrow  S_{1j}})+ {\bf x}^{D_1}$ where $S_{11},\dots ,S_{1l}$ is the sequence of parsets and $D_1 \in \mathbb{N}^{n}$ is the $V_k$-reduced divisor. Similiarly, ${\bf x^{v}}=\sum_{j=1}^{p} {\bf x^{m_{2j}}} ({\bf x}^{ S_{2j} \rightarrow  \bar{S}_{2j}}-{\bf x}^{\bar{S}_{2j} \rightarrow  S_{2j}})+ {\bf x}^{D_2}$  where $S_{21},\dots ,S_{2p}$ is a sequence of parset not containing $V_k$ and $D_2 \in \mathbb{N}^{n}$ is a $V_k$-reduced divisor.
Hence,
${\bf x^{u}}=\sum_{j=1}^{l} {\bf x^{m_{1j}}} ({\bf x}^{ S_{1j} \rightarrow  \bar{S}_{1j}}-{\bf x}^{\bar{S}_{1j} \rightarrow  S_{1j}})+ {\bf x}^{D_1}$, and similarly, ${\bf x^{v}}=\sum_{j=1}^{p} {\bf x^{m_{2j}}} ({\bf x}^{ S_{2j} \rightarrow  \bar{S}_{2j}}-{\bf x}^{\bar{S}_{2j} \rightarrow  S_{2j}})+ {\bf x}^{D_2}$  where $S_{11},\dots ,S_{1l}$ and $S_{21},\dots ,S_{2p}$ are sequences of parsets not containing $V_k$, and $D_1,D_2 \in \mathbb{N}^{n}$ are $V_k$-reduced divisors. Since ${\bf u}$ and ${\bf v}$ lie in the same chip-firing equivalence class, both are equivalent to the same $V_k$-reduced divisor (Lemma \ref{effdiv}). Hence, $D_1=D_2$ and ${\bf x^{u}}-{\bf x^{v}} \in \langle {\bf x}^{ S\rightarrow  \bar{S}}-{\bf x}^{ \bar{S} \rightarrow  S} \mid S \text{ is a parset of } V(G) \text{ not containing } V_k \rangle$.
\end{proof}

Let $\lambda\in \mathbb{N}^{n}$ be a weight vector such that $b_{V_i}\cdot \lambda > 0$ for all $i\neq k$. An integral solution of the equation $\Lambda_{G} \tilde{\lambda}=y$ where $y=(1,\dots,1,-(n-1))$ provides such a weight vector. This weight vector $\tilde{\lambda}$ arises as a potential vector $b_q$ in \cite{ChipFirePotential}. Consider a rooted spanning tree $(T,n)$ of $(G,\Pi)$. We define a monomial order $\bar{rev}$ on $R_n=\mathbb{K}[x_1,\dots,x_n]$ as follows. The monomial order $\bar{rev}$ is the reverse lexicographic order induced by the ordering of variables $<'$ where $x_i<'x_j$ if the vertex $j$ is a descendant of the vertex $i$ in $(T,n)$. The monomial order $\bar{rev}$ is known as the \emph{spanning tree order} on $R_n$ induced by $(T,n)$.
Let $m_{rev}$ be the weighted monomial order defined as ${\rm in}_{m_{rev}}(f)={\rm in}_{\bar{rev}}({\rm in}_{\lambda}(f))$ where $f$ is a polynomial in $R_n$.

Note that for a connected parset $S$ of $V(G)$ not containing $V_k$, we have $((S \rightarrow \overline{S}) - (\overline{S} \rightarrow S)) \cdot \lambda > 0$, since $S = \bigcup_{i \in \tilde{S}} V_i$ for some $\tilde{S} \subseteq \{1, \dots, k-1\}$ and $b_{V_i} \cdot \lambda > 0$ for all $i$ from $1$ to $k-1$. Hence, ${\rm in}_{m_{rev}}({\bf x}^{S \rightarrow \overline{S}}-  {\bf x}^{\overline{S} \rightarrow S}      )={\bf x}^{S \rightarrow \overline{S}}$ for all connected parsets $S$ of $V(G)$ not containing $V_k$.
\subsection*{Proof of Theorem \ref{gboftop}}
\begin{proof}
    From Lemma \ref{generating lemma}, the toppling ideal is generated by the set $M=\{ {\bf x}^{S\rightarrow \bar{S}}-{\bf x}^{ \bar{S}\rightarrow S} \mid S \text{ is a parset of }V(G)\text{ not containing }V_k
\}$. From  \cite[Theorem 3.7]{karki2024rationalnormalcurveschip}, for any two parsets $S_1$ and $S_2$ of $V(G)$ not containing $V_k$, 
the $S$-polynomial of ${\bf x}^{S_1\rightarrow \overline{S}_1}-{\bf x}^{ \overline{S}_1\rightarrow S_1}$ and ${\bf x}^{S_2\rightarrow \overline{S}_2}-{\bf x}^{ \overline{S}_2\rightarrow S_2}$ reduces to $0$ (as per the division algorithm \cite[Theorem 2.2.1]{herzog2011monomial}) with respect to $B(S_1\setminus S_2)$ and $B(S_2\setminus S_1)$. Hence, by Buchberger's criterion, the set $M$ forms a Gr\"obner basis of $I_{\Pi}$.

 We now prove that the initial term ${\rm in}_{m_{rev}}({\bf x}^{S\rightarrow \overline{S}}-{\bf x}^{ \overline{S}\rightarrow S})={\bf x}^{S\rightarrow \overline{S}}$ of a parset $S$ not containing $V_k$ is divisible by an initial term ${\bf x}^{U\rightarrow \overline{U}}$, where $U$ and $\overline{U}$ are connected parset of $V(G)$ and $V_k\subseteq \overline{U}$. 
 Suppose the parset $S$ is not connected and $U$ is a connected component of $S$. Note that, $(U\rightarrow \overline{U})(i)\leq (S\rightarrow \overline{S})(i)$ for all $i$ from $1$ to $n$, and $\overline{U}$ is connected. Hence, for any connected component $U$ of $S$, ${\bf x}^{U\rightarrow \overline{U}}$ divides ${\bf x}^{S\rightarrow \overline{S}}$.

 Suppose the parset $\overline{S}$ is not connected. Let ${U}_1$ be the connected component of $\overline{S}$ that contains $V_k$. Note that $U_2=\overline{U}_1$ is a connected parset that does not contain $V_k$ and $(U_2\rightarrow \overline{U}_2)(i)\leq (S\rightarrow \overline{S})(i)$ for all $i$ from $1$ to $n$. This implies ${\bf x}^{U_2\rightarrow \overline{U}_2}$ divides ${\bf x}^{S\rightarrow \overline{S}}$ and $U_2,\overline{U}_2=U_1$ are both connected parsets.
From \cite[Definition 2.1.5, Theorem 2.1.8]{herzog2011monomial},  the set$\{{\bf x}^{S \rightarrow \overline{S}}-  {\bf x}^{\overline{S} \rightarrow S} \mid S  \text{ and }\overline{S} \text{ are connected parsets of } V(G)\text{ and } V_k\subseteq \overline{S}\}$ forms a Gr\"obner basis of the toppling ideal $I_{\Pi}$ with respect to the monomial order $m_{rev}$.
\end{proof}

In case of a graph $G$, the initial ideal $M_G=\langle {\bf x}^{S \rightarrow \overline{S}} \mid  S\subset V(G)\setminus {n} \text{ and } G[S], G[\overline{S}] \text{ are connected} \rangle$ is called the $G$-parking function ideal of $G$. This is due to the connections between the standard monomials of $M_G$ with the \emph{$G$-parking functions}{\cite{ds},\cite{manjunath2013monomials}}. We extend this notion of the $G$-parking function ideal of a graph to a pargraph.

\begin{definition}
    The $G$-parking function ideal of a pargraph $(G,\Pi)$ is the (initial) monomial ideal $M_{\Pi}=\langle {\bf x}^{S\rightarrow \overline{S}}\mid S  \text{ and }\overline{S} \text{ are connected parsets of } V(G)\newline \text{ and } V_k\subseteq \overline{S}\rangle$

    %S \text{ is a parset of }V(G) \text{ not containing }\newline V_k\rangle$.

\end{definition}

\begin{remark}{\rm In case of the pargraph $(G,\Pi)$, where $\Pi=\{V_1,\dots, V_n\}$ and $V_i=\{i\}$ for all $1\leq i \leq n$, the notion of the $G$-parking function ideal of the pargraph coincides with the notion of $G$-parking function ideal of the graph $G$.}
    
\end{remark}
\begin{proposition}\label{prop3.6}
    The $G$-parking function ideal $M_{\Pi}$ of a pargraph $(G,\Pi)$ is minimally generated by the set $\{{\bf x}^{S \rightarrow \overline{S}}\mid S  \text{ and }\overline{S} \text{ are connected parsets of } V(G)\newline \text{ and } V_k\subseteq \overline{S}\}$.
\end{proposition}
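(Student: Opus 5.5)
Since $M_{\Pi}$ is defined as the ideal generated by this set, the set generates $M_{\Pi}$. It remains to show minimality: no generator divides a different one. Equivalently, if $S_1,S_2$ are connected parsets with connected complements, both avoiding $V_k$, and ${\bf x}^{S_1\rightarrow \overline{S}_1}$ divides ${\bf x}^{S_2\rightarrow \overline{S}_2}$, then $S_1=S_2$. (Distinct admissible $S$ give distinct monomials; this also follows from the argument.)

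\textbf{Step 1: $S_1\subseteq S_2$.} Divisibility only tells us about vertices carrying a positive exponent in ${\bf x}^{S_1\rightarrow\overline{S}_1}$, namely the boundary vertices of $S_1$ (those with ${\rm outdeg}_{S_1}(i)>0$). Such vertices lie in $S_1$, and divisibility forces ${\rm outdeg}_{S_2}(i)\geq {\rm outdeg}_{S_1}(i)>0$, so each lies in $S_2$. Thus $\partial S_1:=\{i\in S_1 \mid {\rm outdeg}_{S_1}(i)>0\}\subseteq S_2$. Since $G$ is connected and $S_1\neq\emptyset$, $\partial S_1$ is nonempty. Next, $S_1$ is a parset, so $S_1\cap S_2$ is a union of main vertices. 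Suppose some main vertex $V_j\subseteq S_1$ is not contained in $S_2$, so $V_j\subseteq S_1\setminus S_2$. Because $G[S_1]$ is connected, there is a path inside $S_1$ from $V_j$ to $\partial S_1\subseteq S_2$. Along that path we find an edge $\{a,b\}$ with $a\in S_1\setminus S_2$ and $b\in S_1\cap S_2$. This is where I would want a contradiction, but on its own it does not give one: $b$ leaving $S_2$ via $a$ is compatible with divisibility, since it only adds to ${\rm outdeg}_{S_2}(b)$.

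\textbf{Step 2: the main obstacle.} I would therefore use the connectedness of $\overline{S}_2$ together with a comparison of degrees. The plan is to compare edge counts: the total degree of ${\bf x}^{S\rightarrow\overline S}$ is the cut size $|E(S,\overline S)|$. Each edge of $E(S_1,\overline S_1)$ has its $S_1$-endpoint in $S_2$. I would show that the other endpoint also lies outside $S_2$, so that the edge belongs to $E(S_2,\overline S_2)$. Suppose instead that an edge $\{i,j\}$ with $i\in S_1$ has $j\in \overline{S}_1\cap S_2$. Then $j$ is a boundary vertex of $\overline S_1$ lying inside $S_2$.

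To rule this out, I would run a symmetric argument on $T_1=\overline{S}_1$ and $T_2=\overline{S}_2$, which are connected and contain $V_k$. The aim is $T_2\subseteq T_1$, i.e.\ $S_1\supseteq S_2$, which combined with Step 1 gives equality. The natural route is to examine the exponent vector difference $(S_2\rightarrow\overline S_2)-(S_1\rightarrow\overline S_1)\geq 0$. Summing this difference over $S_1$ should give $|E(S_1,S_2\setminus S_1)|$ minus terms coming from edges of $E(S_1,\overline S_1)$ that do not leave $S_2$. I expect a careful double counting of edges among the three regions $S_1$, $S_2\setminus S_1$, and $\overline{S}_2$ to show that $S_2\setminus S_1$ must be empty. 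Here connectedness of $\overline S_2$ is used to see that $S_2\setminus S_1$ cannot be a union of components attached only to $S_1$. If the counting fails, I would fall back on the known argument for graphs in \cite{manjunath2013monomials} (the case of singleton parts) and adapt it, replacing vertices by main vertices in the cut-minimality argument.
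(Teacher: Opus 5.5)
Your proposal has a genuine gap: neither inclusion is actually proved. Step~1 stalls, as you note yourself, and Step~2 is only an expectation that some double count of cut sizes will work, with a fallback to an unspecified adaptation of the graph case. The underlying problem is that you use the wrong connectivity hypothesis at each stage. Connectedness of $S_1$, the set of the \emph{dividing} monomial, can never give $S_1\subseteq S_2$. For example, take the path $a-b-s$ with singleton parts and $V_k=\{s\}$, and let $S_1=\{a,b\}$, $S_2=\{b\}$. Then ${\bf x}^{S_1\rightarrow\overline{S}_1}=x_b$ divides ${\bf x}^{S_2\rightarrow\overline{S}_2}=x_b^{2}$, yet $S_1\not\subseteq S_2$, even though $S_1$ and $\overline{S}_1$ are connected. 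What fails in this example is the connectedness of $\overline{S}_2$. Symmetrically, connectedness of $\overline{S}_2$ cannot force $S_2\subseteq S_1$, contrary to what you propose in Step~2. On the path $a-s-c$, the sets $S_1=\{a\}$ and $S_2=\{a,c\}$ give $x_a\mid x_ax_c$ with $\overline{S}_2=\{s\}$ connected; here it is $S_2$ that is disconnected.

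The repair is short, and your observation $\partial S_1\subseteq S_2$ is already the key input. It says that every vertex of $S_1\setminus S_2$ has no neighbour in $\overline{S}_1$; these are exactly the vertices of $S_1$ where ${\bf x}^{S_2\rightarrow\overline{S}_2}$ has exponent $0$. Now $\overline{S}_2$ is the disjoint union of $S_1\setminus S_2$ and $\overline{S}_1\cap\overline{S}_2$. The second piece contains $V_k$, so it is nonempty, and there are no edges between the two pieces. Hence connectedness of $\overline{S}_2$ forces $S_1\subseteq S_2$. Once $S_1\subseteq S_2$, every $i\in S_1$ satisfies ${\rm outdeg}_{S_1}(i)={\rm outdeg}_{S_2}(i)+(\text{number of edges from } i \text{ to } S_2\setminus S_1)$. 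Divisibility therefore rules out all edges between $S_1$ and $S_2\setminus S_1$. Since $S_1\neq\emptyset$, connectedness of $S_2$ gives $S_2=S_1$, and no counting of cut sizes is needed. This is exactly the paper's two-case argument with the roles of $S_1$ and $S_2$ interchanged. Both connectivity hypotheses that are used belong to the monomial being divided: its set and its complement. That is why arguing from the divisor's side, as in your Step~1, could not close.
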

\begin{proof}
    From Theorem \ref{gboftop}, the set $\mathcal{M}=\{{\bf x}^{S \rightarrow \overline{S}}\mid S  \text{ and }\overline{S} \text{ are connected parsets}\newline \text{ of } V(G)\text{ and } V_k\subseteq \overline{S}\}$ generates the $G$-parking function ideal $M_{\Pi}$. In order to prove the result, we show that if we take any two distinct monomials from $\mathcal{M}$, neither of them divides the other.
    Let ${\bf x}^{S_1 \rightarrow \overline{S}_1}$ and ${\bf x}^{S_2 \rightarrow \overline{S}_2}$ be two distinct monomials in $\mathcal{M}$. Without loss of generality, assume that ${\bf x}^{S_1 \rightarrow \overline{S}_1}$ is divisible by ${\bf x}^{S_2 \rightarrow \overline{S}_2}$.
Suppose that $S_2\subset S_1$. Since ${\bf x}^{S_2 \rightarrow \overline{S}_2}$ divides ${\bf x}^{S_1 \rightarrow \overline{S}_1}$, ${\rm outdeg}_{S_2}(i)\leq {\rm outdeg}_{S_1}(i)$ for all $i\in S_2$. This implies that there are no edges between $S_2$ and $S_1 \setminus S_2$, which is a contradiction since $S_1$ is a connected parset. 

Suppose that $S_2 \not\subset S_1$. Since ${\bf x}^{S_2 \rightarrow \overline{S}_2}$ divides ${\bf x}^{S_1 \rightarrow \overline{S}_1}$, ${\rm outdeg}_{S_2}(i)=0$ for all $i\in S_2\setminus S_1$. This implies that there are no edges between $S_2\setminus S_1$ and $\overline{S}_1 \setminus S_2$, which is a contradiction as $\overline{S}_1$ is a connected parset. This proves that the set $\mathcal{M}$ minimally generates $M_{\Pi}$. 
\end{proof}
\subsection{Cellular Free Resolutions of $G$-Parking Function Ideals}\label{cellresofparkingidelas}
In this subsection, we construct a (minimal) \emph{cellular free resolution} for the $G$-parking function ideal of a pargraph $(G,\Pi)$. For this, we define the notion of a \emph{graphical hyperplane arrangement} of a pargraph.  The collection of bounded cells in this arrangement gives us a \emph{polyhedral cell complex} which produces the desired free resolution. This extends the work of the author and Manjunath on parcycles to pargraphs \cite{karki2024rationalnormalcurveschip}. For graphs, the construction of cellular free resolutions for $G$-parking function ideals using graphical arrangements was first introduced by Dochtermann and Sanyal \cite{ds}.

We briefly discuss the construction of a minimal cellular free resolution of the $G$-parking function ideal $M_G$, as defined for a graph $G$ in \cite{ds}. For a given graph $G$ on the vertex set $\{1,\dots,n\}$, we associate an affine space $U_G=\{{\bf p}=(p_1,\dots,p_n) \in \mathbb{R}^{n} \mid p_n=0,p_1+\dots+p_{n-1}=1\}$  and a hyperplane arrangement $\mathcal{A}_G=\{h_{ij}\mid \{i,j\} \text{ is an edge of } G\}$ where $h_{ij}=\{{\bf p}\in \mathbb{R}^{n} \mid p_i=p_j \}$. By restricting the arrangement $\mathcal{A}_G$ to $U_G$, we obtain a polyhedral cell complex, denoted by $\mathcal{B}_G$, that consists of all the bounded cells resulting from this restriction. A suitable labeling of the vertices ($0$-dimensional cells) of $\mathcal{B}_G$ with the minimal generators of $M_G$ provides us a labelled polyhedral cell complex. This labelled polyhedral cell complex produces a minimal cellular free resolution of $M_G$.

 %Let $G$ be a connected multigraph on the vertex set $\{1,\dots,n\}$. Let $(G,\Pi)$ be a pargraph where $\Pi=\{V_1,\dots,V_k\}$ is a connected partition and $n\in V_k$. 

 Given a pargraph $(G,\Pi)$ where $\Pi=\{V_1,\dots,V_k\}$ and $n\in V_k$, we define the affine subspace $U_{\Pi}=\{{\bf p}\in \mathbb{R}^{n} \mid p_n=0,p_1+\dots+p_{n-1}=1, p_{i_1}=p_{i_2} \text{ if }\{i_1,i_2\}\subseteq V_l \text{ for some } l \text{ from }1 \text{ to } k\} \subseteq \mathbb{R}^{n}$. %Consider the pargraph $(G,\Pi)$ where $\Pi=\{V_1,\dots,V_k\}$ and $n\in V_k$. Consider the affine subspace $U_{\Pi}=\{{\bf x}\in \mathbb{R}^{n} \mid x_n=0,x_1+\dots+x_{n-1}=1, x_{i_1}=x_{i_2} \text{ if }\{i_1,i_2\}\subseteq V_l \text{ for some } l\}$ of $\mathbb{R}^{n}$.
 For each relevant edge $\{i,j\}$ of $(G,\Pi)$, we introduce a hyperplane $h_{ij}=\{{\bf p}=(p_1,\dots,p_n)\in \mathbb{R}^{n} \mid p_i=p_j \}$. 
 We call the arrangement $\mathcal{A}_{\Pi}=\{h_{ij}\mid \{i,j\} \text{ is a relevant edge of } (G,\Pi) \}$ of hyperplanes in $\mathbb{R}^{n}$ as the \emph{graphical hyperplane arrangement} of $(G,\Pi)$. In order to construct the desired polyhedral cell complex, we restrict the arrangement $\mathcal{A}_{\Pi}$ on the affine subspace $U_{\Pi}$. The restricted hyperplane arrangement $\tilde{\mathcal{A}}_{\Pi}=\{ h_{ij} \cap U_{\Pi} \mid  \{i,j\} \text{ is a relevant edge of } (G,\Pi)\}$ is an \emph{essential} hyperplane arrangement in the sense of \cite[Subsection 1.1]{stanley2004introduction}. The arrangement $\tilde{\mathcal{A}}_{\Pi}$ partitions the space $U_{\Pi}$ into polyhedra of different dimensions, which are called \emph{cells}. Let ${\mathcal{B}}_{\Pi}$ be the collection of all bounded cells in $U_{\Pi}$. 
 Let $E_B$ be the set of basic edges of $(G,\Pi)$. Note that $U_{\Pi}=U_G \bigcap_{{\{i,j\}\in E_{B}}}h_{ij}$ and $\tilde{\mathcal{A}}_{\Pi}$ is the restriction of ${\mathcal{A}}_G$ on $U_{\Pi}$. This implies the elements of ${\mathcal B}_{\Pi}$ are also elements of $\mathcal{B}_G$ and hence, $\mathcal{B}_{\Pi}$ is a subcomplex of the polyhedral complex $\mathcal{B}_G$.

 From \cite[Proposition 3, Corollary 2]{ds}, the $0$-dimensional cells of $\mathcal{B}_{\Pi}$ are of the form $\chi_{J}/|J|$ where $J\subseteq \{1,\dots,n-1\}$ such that the induced subgraphs $G[J]$ and $G[V(G)\setminus J]$ are connected, and $\chi_{J}$ is the characteristic vector of the subset $J$. Since the $0$-dimensional cells of $\mathcal{B}_{\Pi}$ lie on $U_{\Pi}$, every $0$-dimensional cell of $\mathcal{B}_{\Pi}$ will be of the form $\chi_{J}/|J|$ where $J\subseteq \{1,\dots,n-1\}$ and both $J$ and $\overline{J}= V(G)\setminus J$ are connected parsets of $V(G)$. Let $|\mathcal{B}_{\Pi}|$ be the point set of $\mathcal{B}_{\Pi}$. To determine whether a point $p\in U_{\Pi}$ is in $|\mathcal{B}_{\Pi}|$, we construct a directed graph $G/p$ and use \cite[Proposition 6.1]{ds}. We first partition the vertex set of $G$ according to the equivalence relation where two vertices $i$ and $j$ are related if there exists a path $i=i_0i_1\dots i_{l}=j$ in $G$ such that $p(i_0)=p(i_1)=\dots=p(i_l)$. Let $W_1,W_2,\dots,W_m$ be the equivalence classes. Note that each induced subgraph $G[W_i]$ is connected. The underlying undirected graph of $G/p$ is obtained by contracting each $G[W_i]$ to a single vertex. An edge in $G/p$ connecting $G[W_i]$ to $G[W_j]$ has the orientation $(G[W_i],G[W_j])$ if $p(i_s)>p(i_t)$ for some $i_s\in W_i$ and $i_t\in W_j$. From \cite[Proposition 6.1]{ds}, a point $p\in U_\Pi$ lies in $|\mathcal{B}_{\Pi}|$ if $G/p$ has a unique sink at the vertex class containing $n$.

% We now associate monomials to the vertices of $\mathcal{B}_{\Pi}$.
For a  vertex $p=\chi_{J}/|J|$ of $\mathcal{B}_{\Pi}$, we associate it with the monomial ${\bf m}_{p}= {\bf x}^{J\rightarrow \overline{J}}= \prod_{i\in J}x_i^{d_i}$, where $d_i={\rm outdeg}_{J}(i)$. Using the assignment of monomials to vertices above, for each bounded cell $\tilde{B} \in \mathcal{B}_{\Pi}$ we define the associated monomial $\mathbf{m}_{\tilde{B}}$ to be the least common multiple of $\{\mathbf{m}_u \mid u \text{ is a vertex of } \tilde{B}\}$.
%Using the above assignment of monomials to vertices, we define for each bounded cell $\tilde{B}\in \mathcal{B}_{\Pi}$ the associated monomial ${\bf m}_{\tilde{B}}$ to be the least common multiple of $\{{\bf m}_{u}\mid u \text{ is a vertex of } \tilde{B}\}$.
%Using the above assignment of monomials to vertices, we associate to each bounded cell $\tilde{B}\in \mathcal{B}_{\Pi}$ the monomial ${\bf m}_{\tilde{B}}$, defined as the least common multiple of the monomials $\{{\bf m}_{u}\mid u \text{ is a vertex of } \tilde{B}\}$. 
This assignment of monomials turns $\mathcal{B}_{\Pi}$ into a labelled polyhedral cell complex. 
 
\begin{lemma}\label{starconvexity lemma}  Let $(G,\Pi)$ be a pargraph and $\mathcal{B}_{\Pi}$ be the corresponding labelled polyhedral cell complex. The set $|\mathcal{B}_{\Pi}|_{\leq \sigma}$ is star-convex for every $\sigma \in \mathbb{N}^{n-1}$.
    \end{lemma}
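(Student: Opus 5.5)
The plan is to follow the strategy of Dochtermann and Sanyal \cite{ds} for the graph case, and to use that $\mathcal{B}_{\Pi}$ is the subcomplex of $\mathcal{B}_G$ cut out by the linear space $U_{\Pi}$. First I will describe $|\mathcal{B}_{\Pi}|_{\leq \sigma}$ explicitly. For $p$ in a bounded cell $\tilde B$ with $\mathbf{m}_{\tilde B}\mid \mathbf{x}^{\sigma}$, the label of the cell carrying $p$ can be read off from $G/p$. Each vertex $i\neq n$ gets the exponent equal to the number of edges $\{i,j\}$ with $p_i>p_j$. Indeed, the lcm of the vertex labels $\mathbf{x}^{J\rightarrow\overline{J}}$ over the vertices of the cell containing $p$ in its relative interior equals $\prod_i x_i^{|\{j\sim i:\, p_i>p_j\}|}$, exactly as in \cite[Section 6]{ds}.

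With this description,
\[
|\mathcal{B}_{\Pi}|_{\leq\sigma}=\{p\in U_{\Pi}\cap|\mathcal{B}_G| \;:\; |\{j\sim i : p_i>p_j\}|\le \sigma_i \text{ for all } i\le n-1\}.
\]
Since $U_{\Pi}\cap|\mathcal{B}_G|=|\mathcal{B}_{\Pi}|$, this set is simply $|\mathcal{B}_G|_{\leq\sigma}\cap U_{\Pi}$.

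Next I will choose the star center. In \cite{ds}, $|\mathcal{B}_G|_{\le\sigma}$ is shown to be star-convex with respect to the point $c=\frac{1}{n-1}(1,\dots,1,0)$, i.e.\ $\chi_{[n-1]}/(n-1)$, whenever the set is nonempty. This point lies in $U_{\Pi}$ only if $V_k=\{n\}$, so in general I must use the analogous center $c_{\Pi}=\chi_{J_0}/|J_0|$ with $J_0=V(G)\setminus V_k$. This is a vertex of $\mathcal{B}_{\Pi}$ provided $J_0$ is connected, and it corresponds to firing everything outside $V_k$.

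The main obstacle is that $c_{\Pi}$ need not lie in $|\mathcal{B}_{\Pi}|_{\le\sigma}$. I will therefore argue directly instead of quoting \cite{ds}. Take $p\in|\mathcal{B}_{\Pi}|_{\le\sigma}$ and the segment $p_t=(1-t)p+tq$ toward a chosen point $q$. The goal is to show that for every edge $\{i,j\}$ with $i\ne n$, the strict inequality $(p_t)_i>(p_t)_j$ holding for some $t\in(0,1]$ forces $p_i>p_j$. Then the exponents can only decrease along the segment, and the unique-sink condition of \cite[Proposition 6.1]{ds} is preserved.

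The natural choice, as in \cite{ds}, is $q$ equal to the point of $|\mathcal{B}_{\Pi}|$ where all coordinates outside $V_k$ are equal. Along the segment, the order relations $p_i>p_j$ then move toward equality, except for comparisons with $V_k$, where the coordinate is $0$. Comparisons with $0$ are strict on both ends because the sum constraint keeps the other coordinates positive. So if $(p_t)_i>(p_t)_j$ and $j\notin V_k$, linearity gives $p_i>p_j$. If $j\in V_k$, then $p_i\ge 0=p_j$, and equality $p_i=0$ would put $i$ in the sink class. I expect this to contradict the unique-sink condition only after a careful check that such an $i$ cannot acquire a new out-edge; this is where I anticipate the real work.

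If $J_0$ is disconnected, so that $q\notin|\mathcal{B}_{\Pi}|$, I would instead take $q$ to be the constant vector on the component pieces, scaled appropriately, and repeat the same monotonicity argument. Finally, nonemptiness handles the trivial case, since an empty set is vacuously star-convex.
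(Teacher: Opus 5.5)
\textbf{There is a genuine gap: the choice of star center.}

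The point you finally use, ``all coordinates outside $V_k$ equal'', is exactly $c_{\Pi}=\chi_{J_0}/|J_0|$. You yourself note that $c_{\Pi}$ need not lie in $|\mathcal{B}_{\Pi}|_{\leq\sigma}$. A star center must belong to the set, so no fixed, $\sigma$-independent point can work.

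The monotonicity step fails as well. Comparisons with $V_k$ need not be strict at both ends, since a vertex $i\notin V_k$ adjacent to $V_k$ may have $p_i=0$. Such an $i$ simply lies in the sink class of $p$, which does not contradict uniqueness of the sink. Yet $(p_t)_i>0$ for every $t>0$, so the exponent of $x_i$ increases along the segment.

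Concretely, take the path $1$--$2$--$3$ with singleton parts, $n=3$ and $\sigma=(1,0)$. Then $|\mathcal{B}_{\Pi}|_{\leq\sigma}=\{(1,0,0)\}$, with label $x_1$. The point $c_{\Pi}=(\tfrac12,\tfrac12,0)$ has label $x_2$, and every point strictly between them has label $x_1x_2$. This is a pure graph case, so it also shows your attribution of the fixed center $\chi_{[n-1]}/(n-1)$ to Dochtermann--Sanyal cannot be right.

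Your first step, $|\mathcal{B}_{\Pi}|_{\leq\sigma}=|\mathcal{B}_G|_{\leq\sigma}\cap U_{\Pi}$, is correct. It does not reduce the lemma to the graph case, however, because a star center of $|\mathcal{B}_G|_{\leq\sigma}$ need not lie in $U_{\Pi}$.

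\textbf{The missing idea is the $\sigma$-dependent center used in the paper.} Let $\chi_{I_1}/|I_1|,\dots,\chi_{I_m}/|I_m|$ be the vertices of $|\mathcal{B}_{\Pi}|_{\leq\sigma}$. Let $S$ be the union of the supports of their labels $\mathbf{x}^{I_j\rightarrow\overline{I_j}}$, and let $K$ be the component of $G[V(G)\setminus S]$ containing $n$. Put $W=V(G)\setminus K$ and $q=\chi_W/|W|$. The verification then runs as follows.
\begin{enumerate}
\item Each $I_j\subseteq W$, since a path inside $K$ leaving $I_j$ would pass through a vertex of $S$.
\item $W$ is a parset, so $q\in U_{\Pi}$. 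Indeed, a basic edge $\{i_1,i_2\}$ with $i_1\in W$, $i_2\in K$ forces $i_1\in S$. Hence $i_1\in I_j$ and $i_2\in\overline{I_j}$ for some $j$, contradicting that $I_j$ is a parset.
\item $G/q$ has a unique sink.
\item The label $\mathbf{x}^{W\rightarrow\overline{W}}$ of the cell carrying $q$ is bounded by $\sigma$. Any $i\in W$ with a neighbour in $K$ lies in $S$, hence in some $I_l\subseteq W$, and then $\mathrm{outdeg}_W(i)\le\mathrm{outdeg}_{I_l}(i)\le\sigma_i$.
\item For $t\in|\mathcal{B}_{\Pi}|_{\leq\sigma}$ one has $\mathrm{Supp}(t)\subseteq W$, and no relevant hyperplane separates $t$ from $q$.
\item At any $g$ in the open segment $(t,q)$, a strict inequality $g_i>g_j$ across an edge already holds at $t$ if $t_i>0$, and at $q$ if $t_i=0$.
\end{enumerate}
The last item is the correct replacement for your monotonicity claim. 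It bounds the label of the cell containing $(t,q)$ by $\sigma$ and keeps the unique sink at the class of $n$.
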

\begin{proof}
Let $p_1,\dots,p_{m}$ be the $0$-dimensional cells in $|\mathcal{B}_{\Pi}|_{\leq \sigma}$. We know each $p_j=\chi_{I_j}/{|I_j|}$ where $I_j\subseteq \{1,\dots,n-1\}$ and both $I_j$ and $\overline{I_j}$ are connected parsets of $V(G)$. Let $a_{p_j}$ be the exponent vector of the monomial associated with $p_j$, i.e. $a_{p_j}=I_{j}\rightarrow \overline{I_j}$.
Consider the set  $S=\bigcup_{j=1}^{m} {\rm Supp}(a_{p_j}) \subset V(G)$. Let $K$ be the set of vertices of the connected component of $G[V(G)\setminus S]$ that contains $n$. Let $W=V(G)\setminus K$ and $q=\chi_{W}/|W|$. Note that the directed graph $G/q$ has a unique sink at the vertex class containing $n$. From \cite[Proposition 6.1]{ds}, the vertex $q$ lies in a bounded cell of $B_G$.

We now prove that $q\in U_{\Pi}$. For this, it is enough to show that for each $j$, either $V_j\subseteq W$ or $V_j\subseteq K$. Suppose that there exists a main vertex $V_i\in \Pi$ that is neither a subset of $W$  nor a subset of $K$. There exist vertices $i_{1}, i_{2}\in V_i$ such that $\{i_{1}, i_{2}\}\in E(G)$, with $i_{1}\in W$ and $i_{2}\in K$. 
Note that the vertex $i_1$ must also belong to $S$. This holds because if $i_1\notin S$, then the edge $\{i_{1}, i_{2}\}\in E(G)$ and the condition $i_{2}\in K$ would imply $i_1\in K$ by the construction of $K$, a contradiction. Since $i_1\in S (=\bigcup_{j=1}^{m} {\rm Supp}(a_{p_j}))$ and every $0$-dimensional cell of $\mathcal{B}_{\Pi}$ is of the form $\chi_{J}/|J|$, there exists a connected parset $J$ of $V(G)$ such that $i_1\in J$. From the construction of $W$, ${\rm Supp}(p_j)\subseteq W$ for all $j$ from $1$ to $m$. This implies $i_2\in \overline{J}, i_1\in J$ and it  provides us a contradiction as $J$ and $\overline{J}$ are connected parsets of $(G,\Pi)$ and $\{i_1,i_2\}\subseteq V_i$.
Therefore, our assumption is false, which implies $q \in U_{\Pi}$ and, in particular, $q \in B_{\Pi}$.%Hence, our assumption is wrong, and thus $q \in U_{\Pi}$ and in particular $q \in B_{\Pi}$.

Let $C_q$ be the inclusion-minimal bounded cell on $U_{\Pi}$ that contains $q$. From \cite[Proposition 6.3]{ds}, the exponent vector of the monomial associated with $C_q$ is $a_q=(W\rightarrow \overline{W})$. In order to show $q\in |\mathcal{B}_{\Pi}|_{\leq \sigma}$, it is enough to $a_q\leq \sigma$. Let $i_s\in W$ such that $a_q(i_s)=(W\rightarrow \overline{W})(i_s)>0$. There exists $i_t\in K$ such that $\{i_s,i_t\}\in E(G)$. As noted earlier, if $\{i_s,i_t\}\in E(G)$ and $i_s\in W, i_t\in K$, then $i_s\in S$. Hence, $i_s\in {\rm Supp}(a_{p_l}) \subseteq I_l$ for some $l$. Since $I_{l}\subseteq W$, $a_q(i_s)(=d_{W}(i_s))\leq a_{p_l}(i_s) (=d_{I_l}(i_s))\leq \sigma(i_s)$. This implies $q\in |\mathcal{B}_{\Pi}|_{\leq \sigma}$.

We now show that $q$ is a star-center for $|\mathcal{B}_{\Pi}|_{\leq \sigma}$. Let $t\in |\mathcal{B}_{\Pi}|_{\leq \sigma}$ be an arbitrary point. Let $[q,t]$ be the line segment connecting the points $q$ and $t$. Let $t(i)>t(j)$ for some relevant edge $\{i,j\}$ of $(G,\Pi)$. From \cite[Proposition 6.3]{ds}, $a_{C_t}(i)>0$ where $C_t$ is the inclusion minimal bounded cell in $\mathcal{B}_{\Pi}$ containing $t$, and $a_{C_t}$ is the exponent vector of the monomial associated with it. This implies $i\in S\subseteq W$ and thus $q(i)=1/|W|\geq q(j)$. This shows that no hyperplanes corresponding to the relevant edges of $(G,\Pi)$ separate the points $t$ and $q$ on $U_{\Pi}$. Thus, the open line segment $(t,q)$ is contained in some cell, say  $C_{(t,q)}$, of $\tilde{\mathcal{A}}_{\Pi}$.

 Next, we prove that $C_{(t,q)}$ is a bounded cell and lies in $|\mathcal{B}_{\Pi}|_{\leq \sigma}$.
Let $g$ be a point in the open line segment $(t,q)\subseteq {\rm reint}(C_{(t,q)})$.
From \cite[Proposition 6.1]{ds}, $C_{(t,q)}\in {\mathcal{B}_{\Pi}}$ if $G/g$ has a unique sink at the vertex class containing $n$.
 Suppose there exists another sink in the vertex class containing a vertex $i$, which is distinct from the vertex class containing $n$. Since $t\in |\mathcal{B}_{\Pi}|_{\leq \sigma}$, there exists a path $i=i_0i_1\cdots i_{m}=n$ such that $t(i_{l-1})\geq t(i_l)$ for all $1\leq l \leq m$. %As $G/g$ has a sink at the vertex class containing $i$,
 From the assumption, the path is not weakly decreasing for $g$. This implies there exists an index $j$ such that $g(i_{j-1})<g(i_j)$, and thus $g(i_j)>0$. Note that ${\rm Supp}(g)={\rm Supp}(t)\cup {\rm Supp}(q)$ and by construction ${\rm Supp}(t)\subseteq {\rm Supp}(q)$. This implies $q(i_j)=1/|W|$ and the path is not weakly decreasing for $q$. We have $g\in (t,q)$ and $(t,q)\subseteq \{p\in U_{\Pi}\mid p(i_{j-1})\geq p(i_j)\}$, and thus  $g(i_{j-1})\geq g(i_j)$. This contradicts the condition that  $g(i_{j-1})<g(i_j)$. This proves $G/g$ has a unique sink at the vertex class containing $n$.
 
 Let $a_{g}$ be the exponent vector of the monomial associated with $C_{(t,q)}$. From \cite[Proposition 4]{ds}, $a_g(i)=\#\{\{i,j\}\in E(G) \mid g(i)>g(j)\}$. If $g(i)=0$, then $a_g(i)=0$. We now assume that $g(i)>0$.
 As $g\in (t,q)$, $g(i)>g(j)$ if and only if $wt(i)+(1-w)q(i)> wt(j)+(1-w)q(j)$ where $w\in (0,1)$. If $t(i)=0$, then $q(i)>q(j)$ whenever $g(i)>g(j)$. This implies $a_g(i)\leq a_q(i)\leq \sigma(i)$. We now consider the case $t(i)> 0$. As ${\rm Supp}(t)\subseteq {\rm Supp}(q)$, $i\in W$ and $q(i)>0$. 
 Furthermore, whenever $t(i)> 0$, the relation $g(i)>g(j)$ implies $t(i)>t(j)$.
 This implies $a_g(i)\leq a_{C_t}(i)\leq \sigma(i)$ and hence, $C_{(t,q)}\in |\mathcal{B}_{\Pi}|_{\leq \sigma}$. The proves the star convexity of $|\mathcal{B}_{\Pi}|_{\leq \sigma}$.\end{proof}
 \subsection*{Proof of Theorem \ref{cellres}}
 \begin{proof}
     From Lemma \ref{starconvexity lemma}, $|\mathcal{B}_{\Pi}|_{\leq \sigma}$ is star-convex for every $\sigma \in \mathbb{N}^{n-1}$. This implies $|\mathcal{B}_{\Pi}|_{\leq \sigma}$ is contractible for every $\sigma$ and hence, acyclicity of $\mathcal{H}_{\Pi}$ follows from \cite[Proposition 4.5]{miller2005combinatorial}. As we know each $C\in \mathcal{B}_{\Pi}$ is an element of $\mathcal{B}_G$ with the same monomial labelling, the minimality of the resolution follows from \cite[Proposition 4]{ds}.
 \end{proof}

We use the preceding theorem to compute algebraic invariants and establish properties for the $G$-parking function ideal $M_{\Pi}$ and the toppling ideal $I_{\Pi}$ of a pargraph $(G,\Pi)$.
 %establish the Cohen-Macaulayness of the toppling and $G$-parking function ideals of a pargraph.

 \begin{corollary} Let $G$ be a connected graph on $n$-vertices and 
let $\Pi$ be a connected $k$-partition of $V(G)$.
     The toppling ideal $I_{\Pi}$ and the $G$-parking function ideal $M_{\Pi}$ of the pargraph $(G,\Pi)$ are Cohen-Macaulay.
 \end{corollary}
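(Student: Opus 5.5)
We prove the statement for $M_{\Pi}$ first and then transfer it to $I_{\Pi}$ through the Gr\"obner degeneration of Theorem \ref{gboftop}.

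\textbf{Step 1: Krull dimension.} Note first that $M_{\Pi}$ is generated by monomials in $x_1,\dots,x_{n-1}$ only. No generator ${\bf x}^{S\rightarrow\overline{S}}$ involves $x_n$, since $V_k\subseteq \overline{S}$ and $n\in V_k$. We regard $M_{\Pi}$ as an ideal of $R_{n-1}$.

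We claim $\dim R_{n-1}/M_{\Pi}=0$, i.e.\ $M_{\Pi}$ is $\mathfrak m$-primary in $R_{n-1}$. Equivalently, every variable $x_i$ with $i\le n-1$ has a pure power in $M_{\Pi}$, or at least $\sqrt{M_{\Pi}}=\langle x_1,\dots,x_{n-1}\rangle$.

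An algebraic route is available. The Picard group $\mathbb{Z}^n/L_{\Pi}$ has rank $n-k+1$, because $L_{\Pi}$ has rank $k-1$. Hence $\dim R_n/I_{\Pi}=n-k+1$, and by flatness of the Gr\"obner degeneration also $\dim R_n/{\rm in}(I_{\Pi})=n-k+1$. This does not match zero when $k<n$, so instead we read the dimension directly off the resolution.

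\textbf{Step 2: projective dimension.} Theorem \ref{cellres} gives a minimal free resolution of $R_{n-1}/M_{\Pi}$ supported on $\mathcal{B}_{\Pi}$. Its length is $\dim\mathcal{B}_{\Pi}+1$. The complex $\mathcal{B}_{\Pi}$ consists of the bounded cells of the essential arrangement $\tilde{\mathcal{A}}_{\Pi}$ in the affine space $U_{\Pi}$, where $\dim U_{\Pi}=k-2$. Bounded regions of an essential arrangement have full dimension, so $\dim \mathcal{B}_{\Pi}=k-2$. Therefore ${\rm pd}(R_{n-1}/M_{\Pi})=k-1$, and Auslander--Buchsbaum gives ${\rm depth}(R_{n-1}/M_{\Pi})=(n-1)-(k-1)=n-k$.

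\textbf{Step 3: matching depth and dimension.} It remains to show $\dim R_{n-1}/M_{\Pi}=n-k$. Write the variables in groups by main vertex. By Proposition \ref{prop3.6}, for each main vertex $V_i$ with $i<k$, every minimal generator is supported on vertices of $S$ that have relevant edges leaving $S$.

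We then compute $\dim$ via minimal primes, or equivalently via the $\mathbb{Z}^n$-graded Hilbert function. Since $R_n/I_{\Pi}$ and $R_n/M_{\Pi}R_n$ share a Hilbert function, we get $\dim R_n/M_{\Pi}R_n=n-k+1$, hence $\dim R_{n-1}/M_{\Pi}=n-k$. Here we use that $x_n$ is a free variable, which accounts for the $+1$.

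Depth equals dimension, so $R_{n-1}/M_{\Pi}$ is Cohen--Macaulay. Adjoining the variable $x_n$ shows $R_n/M_{\Pi}R_n$ is Cohen--Macaulay as well.

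\textbf{Step 4: the toppling ideal.} The ideal $M_{\Pi}R_n$ is the initial ideal ${\rm in}_{m_{rev}}(I_{\Pi})$ by Theorem \ref{gboftop}. Cohen--Macaulayness of the initial ideal implies Cohen--Macaulayness of $I_{\Pi}$, by upper semicontinuity of depth along the flat Gr\"obner degeneration together with equality of dimensions (e.g.\ \cite{herzog2011monomial}, Corollary 3.3.5).

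\textbf{Main obstacle.} The delicate point is Step 2's claim that $\dim\mathcal{B}_{\Pi}=k-2$, that is, that bounded cells of top dimension exist. This needs $\tilde{\mathcal{A}}_{\Pi}$ to be essential on $U_{\Pi}$, which the paper states. We also need the dimension of $U_{\Pi}$ to be $k-2$, coming from one coordinate per main vertex minus the two affine constraints. Given both, a bounded region exists: pick any ordering of $V_1,\dots,V_k$ compatible with connectivity to $V_k$ and take the corresponding open chamber.
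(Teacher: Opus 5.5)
Your argument is correct and is essentially the paper's proof. The common steps are: Krull dimension $n-k+1$ from ${\rm rank}(L_{\Pi})=k-1$, transferred to the initial ideal; projective dimension $k-1$ from the minimal cellular resolution on $\mathcal{B}_{\Pi}$ with $\dim U_{\Pi}=k-2$; Auslander--Buchsbaum; and passage of Cohen--Macaulayness from ${\rm in}_{m_{rev}}(I_{\Pi})=M_{\Pi}R_n$ to $I_{\Pi}$. Working over $R_{n-1}$ rather than $R_n$ only shifts depth and dimension by one, and your chamber argument for a bounded $(k-2)$-cell fills in what the paper leaves to ``essentiality''. You should, however, delete the opening claim of Step 1 that $M_{\Pi}$ is $\mathfrak m$-primary in $R_{n-1}$: it is false whenever $k<n$ (you yourself get $\dim R_{n-1}/M_{\Pi}=n-k$), and nothing later depends on it.
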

\begin{proof} 
    To prove the result, it suffices to prove the Cohen-Macaulayness of $M_{\Pi}$ (Theorem \ref{gboftop}, Proposition \ref{prop3.6}). The toppling ideal $I_{\Pi}$ is a lattice ideal associated with the Laplacian lattice of $L_{\Pi}$ of $(G,\Pi)$, and ${\rm rank}(L_{\Pi})=k-1$. From \cite[Proposition 7.5]{miller2005combinatorial},  the Krull dimension ${\rm dim}(R_n/I_{\Pi})=n-k+1$ and hence, ${\rm dim}(R_n/M_{\Pi})=n-k+1$. 
    As the restricted hyperplane arrangement $\tilde{\mathcal{A}}_{G}$ of the underlying graph is essential and $\tilde{\mathcal{A}}_{\Pi}$ is the restriction of ${\mathcal{A}}_G$ on $U_{\Pi}=U_G \bigcap_{{\{i,j\}\in E_{B}}}h_{ij}$, the restricted arrangement  $\tilde{\mathcal{A}}_{\Pi}$ is essential. The affine space $U_{\Pi}$ has dimension $k-2$, and hence the projective dimension of $R_n /M_{\Pi}$ is $k-1$ (Theorem \ref{cellres}).
    %As $\tilde{\mathcal{A}}_{\Pi}$ is an essential hyperplane arrangement on $U_{\Pi}$ which is an affine space of dimension $k-2$, the projective dimension of $R_n /M_{\Pi}$ is $k-1$ (Theorem \ref{cellres}). 
    From Auslander-Buchsbaum formula, ${\rm depth}(R_n/M_{\Pi})=n-{\rm projdim}(R_n/M_{\Pi})=n-(k-1)$. This proves the result.
\end{proof}
\section{Fano Polytopes and Chip-Firing}\label{FanChip}
This section provides examples of toric ideals arising from polytopes that can be realised as toppling ideals of pargraphs.
A \emph{convex polytope} in $\mathbb{R}^{n}$ is the convex hull of finitely many points in $\mathbb{R}^{n}$. A \emph{Fano polytope} is a full dimensional lattice polytope $P \subset \mathbb{R}^{n}$ such that the origin $\mathbf{0}$ lies in the strict interior of $P$, and every vertex of $P$ is a \emph{primitive lattice point}, i.e. for each vertex $v \in P$, the line segment connecting $\mathbf{0}$ and $v$ contains no lattice point strictly between $\mathbf{0}$ and $v$ \cite{kasprzyk2012fano}.
%A convex lattice polytope is called a \emph{Fano polytope} if the origin lies in the strict interior of the polytope and all of its vertices are \emph{primitive} lattice points, i.e. there is no lattice point between the vertex and the origin. 
\begin{definition} Let $P$ be a convex polytope, and $P \cap \mathbb{Z}^{n}=\{{\bf a_1},\dots,{\bf a_n}\}$ be the set of lattice point of $P$. 
   \begin{enumerate}
       \item The toric ideal $I_P$ associated to $P$ is the kernel of the map $\phi: \mathbb{K}[x_1,\dots,x_n]\newline \rightarrow \mathbb{K}[{\bf t^{a_1}}y,\dots,{\bf t^{a_n}}y]$ given by $\phi(x_i)={\bf t^{a_i}}y$ for all $i$. For a lattice point ${\bf{a_i}} = (a_{i1}, \dots, a_{in}) \in \mathbb{Z}^n$, the corresponding monomial is defined as $\mathbf{t}^{\bf{a}_i} = t_1^{a_{i1}} t_2^{a_{i2}} \cdots t_n^{a_{in}}$.
       \item If $\{\bf a_1,\dots,a_m\}$ is the set of vertices of $P$, the kernel of the map $\psi: \mathbb{K}[x_1,\dots,x_m]\rightarrow \mathbb{K}[{\bf t^{a_1}}y,\dots,{\bf t^{a_m}}y]$ is called the toric ideal of the vertex configurations of $P$.
   \end{enumerate} 
\end{definition}

 \begin{subsection}{Windmill Graph}\label{windG}
 Let $K_m$ be the complete graph on $m$-vertices.
 Consider the \emph{windmill graph} $Wd(m,n)$ obtained by joining $n$ copies of $K_m$ at a shared universal vertex. Let $s$ be the common vertex and $K_{m,i}$ be the $i$-th copy of $K_m$. We denote the vertex set of $K_{m,i}$ as $V(K_{m,i})=\{v_{i1},\dots,v_{i (m-1)}\}\cup \{s\} $. The vertex and edge sets of the windmill graph  $Wd(m,n)$ are $V(Wd(m,n))=\bigcup_{i=1}^{n}V(K_{m,i})$ and $E(Wd(m,n))=\bigcup_{i=1}^{n}E(K_{m,i})$, respectively. \noindent The following are some examples of windmill graphs:

\vspace{0.5cm}

\begin{centering}
% --- FIRST ROW: Wd(3,m) ---
\begin{minipage}{0.32\textwidth}
    \centering
    $Wd(3,3)$\\
    \begin{tikzpicture}[scale=0.8, every node/.style={circle, draw, fill=blue, inner sep=1.5pt}]
        \node (C) at (0,0) {};
        \foreach \i in {0,1,2}{
            \node (n1) at ({120*\i+75}:1.2) {};
            \node (n2) at ({120*\i+105}:1.2) {};
            \draw (C) -- (n1) -- (n2) -- (C);
        }
    \end{tikzpicture}
\end{minipage}
\hfill
\begin{minipage}{0.32\textwidth}
    \centering
    $Wd(4,3)$\\
    \begin{tikzpicture}[scale=0.8, every node/.style={circle, draw, fill=blue, inner sep=1.5pt}]
        \node (C) at (0,0) {};
        \foreach \i in {0,1,2}{
            \begin{scope}[rotate={120*\i+90}]
                \node (n1) at (0.6,1) {};
                \node (n2) at (0,1.5) {};
                \node (n3) at (-0.6,1) {};
                \draw (C)--(n1)--(n2)--(n3)--(C) (n1)--(n3) (C)--(n2);
            \end{scope}
        }
    \end{tikzpicture}
\end{minipage}
\hfill
\begin{minipage}{0.32\textwidth}
    \centering
    $Wd(4,4)$\\
    \begin{tikzpicture}[scale=0.8, every node/.style={circle, draw, fill=blue, inner sep=1.5pt}]
        \node (C) at (0,0) {};
        \foreach \i in {0,1,2,3}{
            \begin{scope}[rotate={90*\i}]
                \node (n1) at (0.6,1) {};
                \node (n2) at (0,1.5) {};
                \node (n3) at (-0.6,1) {};
                \draw (C)--(n1)--(n2)--(n3)--(C) (n1)--(n3) (C)--(n2);
            \end{scope}
        }
    \end{tikzpicture}
\end{minipage}
\end{centering}
\vspace{0.5cm}

Let $({\rm Wd}(m,n),\Pi)$ be the pargraph with $\Pi=\{V(K_{m,i})\setminus \{s\},\{s\}  \mid 1\leq i \leq n\}$. Let $x_{ij}$ be the variable corresponding to the basic vertex $v_{ij}$ of $({\rm Wd}(m,n),\Pi)$ for all $i,j$, and let $x_s$ correspond to the (sink) vertex $s$.
From Theorem \ref{gboftop}, the toppling ideal of $({\rm Wd}(m,n),\Pi)$ is generated by $\{x_{i1}\cdots x_{i(m-1)}-x_s^{m-1}\mid 1\leq i \leq n
\}$ and is a toric ideal (Corollary \ref{corrtoptoricideal}).
\end{subsection}
\subsection{Cross Polytopes and Toppling Ideals}
The $n$-dimensional cross polytope $P_n$ is defined as the convex hull of $\{e_i,- e_i \mid 1\leq i \leq n\}$ in $\mathbb{R}^{n}$ where $e_1,\dots, e_n$ are standard basis vector of $\mathbb{R}^{n}$. Equivalently, $P_n=\{(x_1,\dots,x_n)\in \mathbb{R}^{n} \mid |x_1|+|x_2|+\dots+|x_n|\leq 1\}$. The cross polytope constitutes a fundamental example of a Fano polytope.
\begin{figure}[h]
    \centering
    % --- 2D CROSS-POLYTOPE (Square) ---
    \begin{minipage}{0.45\textwidth}
        \centering
        \begin{tikzpicture}[scale=1.8]
            % Coordinates
            \coordinate (A) at (1,0);
            \coordinate (B) at (0,1);
            \coordinate (C) at (-1,0);
            \coordinate (D) at (0,-1);
            
            % Draw axes
            \draw[->, gray!50] (-1.3,0) -- (1.3,0);
            \draw[->, gray!50] (0,-1.3) -- (0,1.3);
            
            % Draw Shape
            \draw[thick, fill=blue!20, fill opacity=0.6] (A) -- (B) -- (C) -- (D) -- cycle;
            
            % Vertices
            \foreach \p in {A,B,C,D} \fill (\p) circle (1.5pt);
        \end{tikzpicture}
        \caption{2D Cross polytope}
    \end{minipage}
    \hfill
    % --- 3D CROSS-POLYTOPE (Octahedron) ---
    \begin{minipage}{0.45\textwidth}
        \centering
        \tdplotsetmaincoords{75}{115}
        \begin{tikzpicture}[tdplot_main_coords, scale=1.8]
            % Vertices
            \coordinate (P1) at (1,0,0);  \coordinate (P2) at (-1,0,0);
            \coordinate (P3) at (0,1,0);  \coordinate (P4) at (0,-1,0);
            \coordinate (P5) at (0,0,1);  \coordinate (P6) at (0,0,-1);

            % Hidden Edges
            \draw[dashed, gray!60] (P2) -- (P3);
            \draw[dashed, gray!60] (P2) -- (P4);
            \draw[dashed, gray!60] (P2) -- (P5);
            \draw[dashed, gray!60] (P2) -- (P6);

            % Visible Faces
            \fill[blue!20, opacity=0.4] (P1) -- (P3) -- (P5) -- cycle;
            \fill[blue!40, opacity=0.4] (P1) -- (P4) -- (P5) -- cycle;
            \fill[blue!10, opacity=0.4] (P1) -- (P3) -- (P6) -- cycle;
            \fill[blue!50, opacity=0.4] (P1) -- (P4) -- (P6) -- cycle;

            % Visible Edges
            \draw[thick] (P1)--(P3) (P1)--(P4) (P1)--(P5) (P1)--(P6) 
                         (P3)--(P5) (P4)--(P5) (P3)--(P6) (P4)--(P6);
            
            % Points
            \foreach \p in {P1,P3,P4,P5,P6} \fill (\p) circle (1.2pt);
        \end{tikzpicture}
        \caption{3D Cross polytope}
    \end{minipage}
\end{figure}

 For $n \geq 2$, the set of lattice points of the cross polytope $P_n$ is given by
\[
P_n \cap \mathbb{Z}^n = \bigl\{ \mathbf{e}_i,\; -\mathbf{e}_i, {\bf 0}_n  \mid 1 \leq i \leq n \bigr\},
\] where ${\bf 0}_n$ denotes the zero vector in $\mathbb{R}^{n}$.
%For a vector ${\bf u}=(u_1,\dots, u_n)\in \mathbb{Z}^n$, we define ${\bf t^u}=t_1^{u_1}\cdots t_n^{u_n}$.
 %The set of lattice point of $P_n$, for $n\geq 2$, is $\{e_i,- e_i, (0,0,\dots,0) \mid 1\leq i \leq n\}$. 
 Let $\phi$ be the $\mathbb{K}$-algebra homomorphism defined by
$
\phi(x_{i1}) = \mathbf{t}^{\mathbf{e}_i}y, \quad \phi(x_{i2}) = \mathbf{t}^{-\mathbf{e}_i}y \quad \text{for } 1 \leq i \leq n,$
and
$
\phi(x_s) = \mathbf{t}^{\mathbf{0}_n}y = y
$.
%where $\mathbf{e}_1,\dots,\mathbf{e}_n$ denote the standard basis vectors in $\mathbb{R}^n$. 
The toric ideal associated to the cross polytope $P_n$ is given by
$
I_{P_n} = \langle x_{i1}x_{i2} - x_s^2 \mid 1 \leq i \leq n \rangle
$. From Subsection \ref{windG}, $I_{P_n}$ is the toppling ideal of $({\rm Wd}(3,n),\Pi)$.

\subsection{Free Sum of Fano Simplices and Toppling Ideals}

An $n$-dimensional Fano simplex in $\mathbb{R}^{n}$ is an $n$-dimensional lattice simplex with primitive lattice vertices containing the origin in its interior. Let $P\subset \mathbb{R}^{n}$ and $Q \subset \mathbb{R}^{m}$ be two Fano polytopes. The \emph{direct} (or free) sum of $P$ and $Q$ is defined as $P \oplus Q= \text{conv}\{(P \times \{{\bf 0}_m\}) \bigcup (\{{\bf 0}_n\} \times Q)\}\subset \mathbb{R}^{n+m}$. 

Consider the $m$-dimensional Fano simplex $F_m={\rm conv}\{e_1,\dots,e_m,-(e_1+\dots+e_m)\}$ where ${e}_1,\dots,{e}_m$ denote the standard basis vectors in $\mathbb{R}^m$. Let $\bigoplus_{i=1}^{n} F_m$ be the direct sum of $n$ copies of $F_m$. The lattice points of $\bigoplus_{i=1}^{n} F_m$ are \[
\mathcal{L}\left(\bigoplus_{i=1}^n F_m\right) = \{\mathbf{0}_{nm}\} \cup \bigcup_{k=1}^n \left\{ (\mathbf{0}_{m(k-1)}, v, \mathbf{0}_{m(n-k)}) \;\middle|\; v \in \left\{e_1, \dots, e_m, -\sum_{j=1}^m e_j\right\} \right\}.
\]
If we map $x_{ij},x_{i(m+1)}$ and $x_s$ to $t^{(\mathbf{0}_{m(i-1)}, e_j, \mathbf{0}_{m(n-i)})}y, t^{(\mathbf{0}_{m(i-1)}, -\sum_{j=1}^m e_j, \mathbf{0}_{m(n-i)})}y, t^{\mathbf{0}_{nm}}y$, respectively, for all $1\leq j \leq m$ and $1\leq i \leq n$, then
the toric ideal associated with the Fano polytope $\bigoplus_{i=1}^{n} F_m$ is generated by the set $\{x_{i1}\cdots x_{i(m+1)}-x_{s}^{m+1}\mid 1\leq i \leq n
\}$. This ideal is equal to the toppling ideal of the windmill graph $W(m+1,n)$.

\subsection{Grid Graphs and Toric Ideals of Vertex Configurations}
The set of vertices of the free sum of Fano simplices $\bigoplus_{i=1}^{n} F_m$ is $
\mathcal{L}\left(\bigoplus_{i=1}^n F_m\right) \setminus \{\mathbf{0}_{nm}\}$.
If we map $x_{ij},x_{i(m+1)}$ to $t^{(\mathbf{0}_{m(i-1)}, e_j, \mathbf{0}_{m(n-i)})}y, t^{(\mathbf{0}_{m(i-1)}, -\sum_{j=1}^m e_j, \mathbf{0}_{m(n-i)})}y$, respectively, for all $1\leq j \leq m$ and $1\leq i \leq n$, then
the toric ideal is generated by the set $\{x_{i1}\cdots x_{i(m+1)}-x_{(i+1)1}\cdots x_{(i+1)(m+1)}\mid 1\leq i \leq n-1
\}$. For $m=1$ and $n\geq 2$, the toric ideal corresponding to the vertices of  $\bigoplus_{i=1}^{n} F_1$ is the toric ideal of vertex configurations of the cross polytope $P_n$.

Consider the grid graph $G_{n,m}$ with vertex set $\{v_{ij}\mid 1\leq i\leq n, 1\leq j \leq m\}$ (illustrated in Figure \ref{gridgraph}). Let $(G_{n,m},\Pi)$ be the pargraph with partition $\Pi=\{ \{v_{i1},\dots,v_{im}\}\mid 1\leq i \leq n 
\}$. From Theorem \ref{gboftop}, the toppling ideal of the pargraph $(G_{n,m},\Pi)$ is generated by the set $\{x_{i1}\cdots x_{im}-x_{(i+1)1}\cdots x_{(i+1)m}\mid 1\leq i \leq n-1
\}$. This shows that the toric ideal of the vertex configurations of  $\bigoplus_{i=1}^{n} F_{m}$ can be realised as the toppling ideal of the pargraph $(G_{n,m+1},\Pi)$.

\begin{figure}[htbp]
    \centering
    \begin{tikzpicture}[
        % Reduced vertex size to 18pt
        vertex/.style={circle, draw, fill=blue!5, inner sep=0pt, minimum size=18pt, font=\small},
        edge/.style={thick},
        % Cleaner dotted lines without extra math symbols
        dotted edge/.style={thick, loosely dotted, shorten >= 2mm, shorten <= 2mm} 
      ]

      % --- Row 1 (Top) ---
      \node[vertex] (v11) at (0, 0)    {$v_{11}$};
      \node[vertex] (v12) at (1.5, 0)  {$v_{12}$};
      \node[vertex] (v1m) at (3.5, 0)  {$v_{1m}$};

      % --- Row 2 ---
      \node[vertex] (v21) at (0, -1.5)   {$v_{21}$};
      \node[vertex] (v22) at (1.5, -1.5) {$v_{22}$};
      \node[vertex] (v2m) at (3.5, -1.5) {$v_{2m}$};

      % --- Row n (Bottom) ---
      \node[vertex] (vn1) at (0, -3.5)   {$v_{n1}$};
      \node[vertex] (vn2) at (1.5, -3.5) {$v_{n2}$};
      \node[vertex] (vnm) at (3.5, -3.5) {$v_{nm}$};

      % --- Horizontal Edges ---
      \draw[edge] (v11) -- (v12);
      \draw[dotted edge] (v12) -- (v1m);

      \draw[edge] (v21) -- (v22);
      \draw[dotted edge] (v22) -- (v2m);

      \draw[edge] (vn1) -- (vn2);
      \draw[dotted edge] (vn2) -- (vnm);

      % --- Vertical Edges ---
      \draw[edge] (v11) -- (v21);
      \draw[dotted edge] (v21) -- (vn1);

      \draw[edge] (v12) -- (v22);
      \draw[dotted edge] (v22) -- (vn2);

      \draw[edge] (v1m) -- (v2m);
      \draw[dotted edge] (v2m) -- (vnm);

    \end{tikzpicture}
    \caption{The grid graph $G_{n,m}$.}
    \label{gridgraph}
\end{figure}

\footnotesize
\noindent {\bf Author’s address:}
\smallskip
\\
School of Advanced Engineering,\\
UPES Dehradun,\\
India 248007.

\noindent {\bf Email id:}
rahuls.karki@ddn.upes.ac.in, rahulkarki877@gmail.com.

\end{document}